\newcommand{\h}{\mathcal{H}}
\newcommand{\ch}{\mathcal{CD}}
\newcommand{\pc}{\mathcal{PCD}}
\newcommand{\n}[1]{{}_{\nu}#1}
\newcommand{\N}{\mathbb{N}}
\newcommand{\Z}{\mathbb{Z}}
\newcommand{\hb}[1]{\mathcal{H}\left(p^{#1}\right)}
\newtheorem{Theorem}{Theorem}[section]
\theoremstyle{definition}
\newtheorem{Definition}[Theorem]{Definition}
\newtheorem{Notation}[Theorem]{Notation}
\newtheorem{Lemma}[Theorem]{Lemma}
\newtheorem{Corollary}[Theorem]{Corollary}
\newtheorem{Remark}[Theorem]{Remark}
\newtheorem{Example}[Theorem]{Example}
    \newtheoremstyle{TheoremNum}
        {\topsep}{\topsep}              
        {\itshape}                      
        {}                              
        {\bfseries}                     
        {.}                             
        { }                             
        {\thmname{#1}\thmnote{ \bfseries #3}}
    \theoremstyle{TheoremNum}
 \newtheorem{thmn}{Theorem}
  \newtheoremstyle{LemmaNum}
        {\topsep}{\topsep}              
        {\itshape}                      
        {}                              
        {\bfseries}                     
        {.}                             
        { }                             
        {\thmname{#1}\thmnote{ \bfseries #3}}
    \theoremstyle{LemmamNum}
\begin{document}

\title[The CD Lattice of the $\mathbb{Z}_{p^n}$ Heisenberg Group]{ Pseudocentralizers and The Chermak-Delgado Measure of the mod $p^{n}$ Heisenberg Group}

\author{David Allen}
\address{Department of Mathematics, Borough of Manhattan Community College (CUNY), New York 10007}
\email{dtallen@bmcc.cuny.edu}

\author{Jos\'{e} J. La Luz}
\address{Departmento de Matem\'aticas, Universidad de Puerto Rico,
Industrial Minillas 170 Car 174, Bayam\'on, PR, 00959-1919}
\email{jose.laluz1@upr.edu}

\author{Stephen Majewicz}
\address{Department of Mathematics, Kingsborough Community College (CUNY), Brooklyn, New York 11235}
\email{smajewicz@kbcc.cuny.edu}

\author{Marcos Zyman}
\address{Department of Mathematics, Borough of Manhattan Community College (CUNY), New York 10007}
\email{mzyman@bmcc.cuny.edu}

\keywords{Chermak-Delgado Lattice, Charmak-Delgado Measure, Heisenberg Groups, Finite Nilpotent Groups, Homological Methods in Group Theory} \subjclass[2020] {Primary: 20J05, 20H25, 20D15; Secondary: 20D30}

\begin{abstract}
In this paper we compute the Chermak-Delgado measure of the mod $p^{n}$ Heisenberg Group for any prime $p.$ To achieve this we introduce the notion of the pseudocentralizer and prove various results about it.
\end{abstract}

\maketitle

\section{Introduction}

Let $G$ be a finite group. The \emph{Chermak-Delgado measure} of a subgroup $H$ of $G,$ denoted by $m(H),$ is defined by $m(H) = |H||C(H)|.$ The maximum of the set $\{m(H) \, | \, H \le G\}$ is termed the \emph{Chermak-Delgado measure} of $G$ and is denoted by $m^{\ast}(G).$ In \cite{CD}, A. Chermak and A. Delgado defined these measures and established a certain sublattice of the lattice of all subgroups of $G.$ This sublattice has been studied in many instances. In particular, certain properties of subgroups of finite simple groups have been established using it.

Various papers in the literature deal with theoretical implications of the concept (for example, see \cite{BW} for the relation between the Chermak-Delgado lattice of a group and its subnormal subgroups and \cite{W} for some basic calculations). The purpose of this paper is to obtain an explicit calculation of the Chermak-Delgado measure for $\h(p^{n}),$ the Heisenberg group with entries in the ring $\mathbb{Z}_{p^n}$ for any prime $p$ and $n > 0.$ To achieve this we resort to some simple homological techniques.

We begin by introducing a new notion called the pseudocentralizer and developing some of its properties in order to obtain our computational results. If $G$ and $K$ are groups and $q : G \rightarrow K$ is a homomorphism, then the \emph{$q$-pseudocentralizer} of a non-empty subset $S$ of $G$ is
\[
P(S; \, q) = \{g \in G \, | \, [s, \, g] \in \ker q \hbox{ for all } s \in S\}.
\]
If $S = \{s\},$ then we write $P(s; \, q).$ For simplicity, we write $P(S)$ for $P(S; \, q)$ when $q$ is fixed and call $P(S)$ the \emph{pseudocentralizer} of $S.$ For our purposes, the homomorphism $q$ will be from the short exact sequence of groups
\begin{center}
\ \ \ \ \ \ \ \ \ \ \xymatrix{1 \ar[r] & \mathbb{Z}_{p}^{3} \ar[r]^{f} & \h(p^{n}) \ar[r]^{q} & \h\left(p^{n - 1}\right) \ar[r] & 1}
\end{center}

\noindent where $f\left(a_{1}, \, a_{2}, \, a_{3}\right) = \left(a_{1}p^{n - 1}, \, a_{2}p^{n - 1}, \, a_{3}p^{n - 1}\right)$ and

\ \ \ \ \ \ $q\left(b_{1}, \, b_{2}, \, b_{3}\right) = \left(b_{1} \hbox{ mod }p^{n - 1}, \, b_{2} \hbox{ mod }p^{n - 1}, \, b_{3} \hbox{ mod }p^{n - 1}\right)$ where the elements of the Heisenberg groups are naturally expressed using vector notation. 

\vspace{.1in}

The reason for introducing the pseudocentralizer of a set $S$ is to study its relation to $C(S),$ the centralizer of $S.$ After showing that $C(S) \unlhd P(S)$  for any $S \subseteq \h(p^{n})$ in the short exact sequence above (see Lemma~\ref{CnormalinP}), we prove the first important theorem in this direction:

\begin{thmn}[\ref{lo2}]
If $h$ is a non-central element of $\hb{n},$ then $|P(h)| = p|C(h)|$. Hence, $P(h)/C(h) \cong \mathbb{Z}_{p}.$
\end{thmn}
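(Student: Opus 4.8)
The plan is to work in explicit coordinates. I write a general element of $\hb{n}$ as a triple $(x,y,z) \in \Z_{p^n}^{3}$ under the Heisenberg multiplication $(x_1,y_1,z_1)(x_2,y_2,z_2) = (x_1+x_2,\, y_1+y_2,\, z_1+z_2+x_1y_2)$, so that the commutator of $(a,b,c)$ with $(x,y,z)$ is the central element $(0,0,\,ay-bx)$. Writing the given non-central element as $h=(a,b,c)$, non-centrality means precisely that $(a,b)\neq(0,0)$ in $\Z_{p^n}$. The first step is therefore to record this commutator identity and to identify $\ker q$ as the set of triples all of whose coordinates are divisible by $p^{n-1}$, that is, $\ker q = \left(p^{n-1}\Z_{p^n}\right)^{3}$.

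The key observation I would exploit is that the map $\phi : \hb{n} \to (\Z_{p^n},+)$ defined by $\phi(x,y,z) = ay - bx$ is a group homomorphism, since its value depends only on the first two coordinates, on which the multiplication is additive. With this in hand both subgroups become preimages under $\phi$: because $[h,g]$ is central, it lies in $\ker q$ exactly when its last coordinate is divisible by $p^{n-1}$, so that $C(h) = \ker\phi$ while $P(h) = \phi^{-1}\!\left(p^{n-1}\Z_{p^n}\right)$. By Lemma~\ref{CnormalinP} we already know $C(h) \unlhd P(h)$, and restricting $\phi$ to $P(h)$ (which contains $\ker\phi$) identifies the quotient via the first isomorphism theorem as $P(h)/C(h) \cong \operatorname{Im}\phi \,\cap\, p^{n-1}\Z_{p^n}$.

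It then remains to compute $\operatorname{Im}\phi$. Since $\operatorname{Im}\phi = a\Z_{p^n} + b\Z_{p^n}$ is the subgroup of $\Z_{p^n}$ generated by $a$ and $b$, it equals $p^{k}\Z_{p^n}$ where $k = \min\!\left(v_p(a), v_p(b)\right)$ is the smaller $p$-adic valuation. Non-centrality guarantees $k \le n-1$, whence $\operatorname{Im}\phi \supseteq p^{n-1}\Z_{p^n}$ and the intersection $\operatorname{Im}\phi \cap p^{n-1}\Z_{p^n}$ is exactly $p^{n-1}\Z_{p^n}$, a group of order $p$. Therefore $|P(h)/C(h)| = p$, which gives $|P(h)| = p\,|C(h)|$; and since a group of prime order $p$ is cyclic, $P(h)/C(h) \cong \Z_p$.

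The one place to be careful — and the step carrying the real content — is the inequality $k \le n-1$, which is exactly where the hypothesis that $h$ is non-central enters. Were $h$ central we would have $a=b=0$, hence $\phi \equiv 0$ and $P(h) = C(h) = \hb{n}$, so the factor of $p$ would collapse; I would thus spell out explicitly why $(a,b)\neq(0,0)$ forces $\min\!\left(v_p(a),v_p(b)\right) \le n-1$ and therefore $p^{n-1}\Z_{p^n} \subseteq \operatorname{Im}\phi$. Everything else reduces to the routine verification of the commutator formula and of the fact that $\phi$ is a homomorphism.
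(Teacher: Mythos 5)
Your proof is correct, but it takes a genuinely different route from the paper. The paper partitions $P(h)$ into the level sets $P_{\ell}(h) = \{g \mid [h,g]|_{2} = \ell p^{n-1}\}$, $0 \le \ell < p$, and for each $\ell$ constructs explicit mutually inverse translation maps $f_{\ell} : C(h) \to P_{\ell}(h)$ (shifting the third coordinate by $r^{-1}\ell p^{n-k-1}$ when $h|_{1} = rp^{k} \ne 0$, and the first coordinate in the other case), then counts: $|P(h)| = \sum_{\ell}|P_{\ell}(h)| = p|C(h)|$. You instead observe that $g \mapsto [h,g]|_{2}$ is a homomorphism $\phi : \hb{n} \to (\Z_{p^n},+)$ (since it depends additively on the two non-central coordinates), identify $C(h) = \ker\phi$ and $P(h) = \phi^{-1}\left(p^{n-1}\Z_{p^n}\right)$, and apply the first isomorphism theorem, computing $\operatorname{Im}\phi = p^{k}\Z_{p^n}$ with $k = \min(v_p(a),v_p(b))$ --- your $k$ is exactly the paper's $\nu(h)$ --- so that non-centrality gives $k \le n-1$ and $P(h)/C(h) \cong p^{n-1}\Z_{p^n} \cong \Z_p$. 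Your argument is shorter and more structural: it avoids the case split on which coordinate of $h$ is nonzero, the normality $C(h) \unlhd P(h)$ comes for free (kernel of a homomorphism, so you need not even cite Lemma~\ref{CnormalinP}), and the cyclic structure of the quotient is immediate rather than inferred from prime order; moreover it explains conceptually why the paper's bijections exist, since your $P_{\ell}(h) = \phi^{-1}(\ell p^{n-1})$ are precisely the cosets of $C(h)$ in $P(h)$ and the $f_{\ell}$ are translations by coset representatives. What the paper's more pedestrian construction buys is precisely those explicit representatives: the maps $f_{\ell}$ are reused immediately afterward (Lemma~\ref{pForm}, Corollary~\ref{specificPForm}, and ultimately Lemma~\ref{representation1}) to write elements of $P_{\ell}(h)$ in closed coordinate form, which your cleaner argument does not by itself supply.
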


Using this theorem, we prove that if $H \le \h(p^{n})$ and $H$ is not in the center of $\h(p^{n}),$ then $P(H)/C(H)$ is isomorphic to either $\mathbb{Z}_{p}$ or $\mathbb{Z}_{p} \times \mathbb{Z}_{p}$  (see Lemma~\ref{diagramChasing}). It turns out that under certain conditions, we know exactly when $P(H)/C(H)$ is isomorphic to $\mathbb{Z}_{p} \times \mathbb{Z}_{p},$ and thus, when $|P(H)| = p^{2}|C(H)|.$ This is the purpose of Theorem~\ref{main1}.

After introducing the concept of an \emph{injective set} for a \emph{special} generating set of $H,$ we generalize Theorem~\ref{main1}. More precisely, we prove:

\begin{thmn}[\ref{main}]
Let $S$ be a special generating set for $H \le \hb{n}$ and $I$ an injective set for $S.$
\begin{enumerate}
\item If $|I| = 0,$ then $|P(H)| = |C(H)|.$
\item If $|I| = 1,$ then $|P(H)| = p|C(H)|.$
\item If $|I| = 2,$ then $|P(H)| = p^{2}|C(H)|.$
\end{enumerate}
\end{thmn}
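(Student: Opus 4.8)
The plan is to reduce the computation of $P(H)$ and $C(H)$ to the generating set $S$ and then to measure the quotient $P(H)/C(H)$ using the single‑element data supplied by Theorem~\ref{lo2}. First I would record the two reductions $P(H) = \bigcap_{s \in S} P(s)$ and $C(H) = \bigcap_{s \in S} C(s)$. The second is the familiar fact that the centralizer of a subgroup is the intersection of the centralizers of a generating set; the first follows in the same way once one observes that $g \mapsto [s,g]$ is a homomorphism into the center (commutators are central) and that $\ker q$ is a subgroup, so the condition $[h,g]\in\ker q$ for all $h\in H$ is equivalent to $[s,g]\in\ker q$ for all $s\in S$.

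With these reductions in hand I would assemble the maps of Theorem~\ref{lo2} into one homomorphism. For each non‑central $s\in S$ that theorem gives $P(s)/C(s)\cong\mathbb{Z}_p$ (central generators contribute trivially), and restricting the quotient maps to $P(H)=\bigcap_s P(s)$ and taking their product yields
\[
\Phi\colon P(H)\longrightarrow \prod_{s\in S} P(s)/C(s)\cong \prod_{s\in S}\mathbb{Z}_p .
\]
Since $C(s)\unlhd P(s)$ by Lemma~\ref{CnormalinP} the quotients are legitimate, and $\ker\Phi=\bigcap_s C(s)=C(H)$ because $\bigcap_s C(s)\subseteq\bigcap_s P(s)=P(H)$. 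Thus $P(H)/C(H)$ embeds into an elementary abelian $p$‑group, so $|P(H)|=p^{\,r}|C(H)|$ where $p^{\,r}=|\operatorname{im}\Phi|$. The entire theorem is therefore the single assertion that $r=|I|$, and Lemma~\ref{diagramChasing} already forces $r\le 2$, matching the three admissible values of $|I|$.

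I would then treat the three cases in turn. When $|I|=0$ the injective set being empty means no generator contributes a nontrivial class, so $P(H)=C(H)$ (indeed $H$ is central), giving (1). When $|I|=1$, injectivity of $I=\{s_0\}$ says the single projection $P(H)/C(H)\to P(s_0)/C(s_0)$ is injective, whence $|P(H)/C(H)|\le p$ by Theorem~\ref{lo2}; as $I$ is nonempty the quotient is nontrivial, so $r\ge 1$ and equality holds, yielding (2). When $|I|=2$, injectivity on the two coordinates of $I$ gives the bound $|P(H)/C(H)|\le p^{2}$, while the independence recorded by $I$ is exactly the hypothesis needed to invoke Theorem~\ref{main1}, which upgrades this to $P(H)/C(H)\cong\mathbb{Z}_p\times\mathbb{Z}_p$ and so proves (3).

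The step I expect to be the main obstacle is the faithful translation between the combinatorial condition defining an injective set $I$ for $S$ and the rank $r$ of $\operatorname{im}\Phi$. Concretely one must show both that the coordinates indexed by $I$ already separate the classes of $P(H)/C(H)$ (the bound $r\le|I|$) and that no smaller set does (so $r\ge|I|$), which is a Smith‑normal‑form analysis of the $|S|\times 2$ matrix whose rows record the linear parts of the generators over $\mathbb{Z}_{p^n}$: the governing invariant is the number of its nonzero elementary divisors. The subtle point is that this number is controlled by the vanishing of the $2\times 2$ minors $\alpha_i\beta_j-\alpha_j\beta_i$ modulo $p^{n}$ rather than modulo $p$, so a generator whose linear part is zero modulo $p$ can nonetheless enlarge $P(H)/C(H)$. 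Verifying that the definition of injective set tracks precisely this invariant, and that the $|I|=2$ configuration meets the hypotheses of Theorem~\ref{main1}, is where the care lies; the remaining bookkeeping is routine.
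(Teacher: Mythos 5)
Your framework---reducing to generators via Lemma~\ref{P-properties} (5),(9), mapping $P(H)$ into $\prod_{s}P(s)/C(s)$ with kernel $C(H)$, and bounding the rank by $2$---is essentially the paper's Lemma~\ref{diagramChasing}, and your cases (1) and (2) can be made correct. But note a definitional misreading: an injective set (Definition~\ref{d:InjectiveSet1}) carries no stated injectivity property of projections; it is a purely combinatorial object ($|I|\le 2$, $\nu(I)=\nu\left(S^{\dagger}\right)$, and $\mu$-minimality of the chosen pair). What actually rescues (2) is Lemma~\ref{Isize}: $|I|=1$ forces $\left|S^{\dagger}\right|=1$, so $P(H)=P(h_{1})$ and $C(H)=C(h_{1})$ by Lemma~\ref{l:P(H)=P(SN)}, and Theorem~\ref{lo2} finishes. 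Your appeals to ``the single projection is injective'' and ``the quotient is nontrivial since $I\neq\emptyset$'' are unsupported as written; worse, nontriviality of $P(H)/C(H)$ for non-central $H$ is the converse half of Lemma~\ref{PnoC2}, which the paper itself deduces from Theorem~\ref{main}, so leaning on it here would be circular.

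The genuine gap is case (3), which is the heart of the theorem. Theorem~\ref{main1} has the hypothesis $S^{\dagger}=\{h_{1},\,h_{2}\}$; when $\left|S^{\dagger}\right|>2$ it applies only to $\widehat{H}=\left<I\right>\le H$, and the conclusion $P\left(\widehat{H}\right)/C\left(\widehat{H}\right)\cong\mathbb{Z}_{p}^{2}$ does not transfer automatically: passing from $\widehat{H}$ to $H$ shrinks both $P$ and $C$, so the quotient could a priori drop rank. The bulk of the paper's proof addresses exactly this point: it shows that the explicit witnesses $z_{1},\,z_{2}$ constructed in Theorem~\ref{main1} from $h_{1},\,h_{2}$ lie in $P(h_{j})$ for \emph{every} $h_{j}\in S^{\dagger}$, splitting into the non-commuting and properly-commuting cases (the latter with three subcases), where the injective-set condition $\mu(h_{1},\,h_{2})\le\mu(h_{1},\,h_{j})$ enters precisely to force $\varphi\left(w_{i}\widehat{r}_{j}p^{\mu(j)}\right)\ge n+\mu(j)-\mu(2)-1\ge n-1$, i.e., membership in $P(h_{j})$. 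Your substitute---a Smith-normal-form analysis of the matrix of linear parts---is a plausible alternative skeleton (the ratio $|P(H)|/|C(H)|$ is indeed $p^{r}$ with $r$ the number of elementary divisors of $p$-valuation at most $n-1$), but you do not carry it out, and you yourself identify the crux without resolving it: cancellation can make the valuation of a minor $\alpha_{i}\beta_{j}-\alpha_{j}\beta_{i}$ strictly exceed what the $\nu$'s and $\mu$'s predict (e.g.\ $h_{1}=(1,\,0,\,1)$ and $h_{2}=\left(1+p^{n-1},\,0,\,1\right)$ have $\mu=0$ but minor of valuation $n-1$), and showing that speciality of $S$ rules out the fatal cancellations is exactly the content of the supercommuting and properly-commuting machinery (Lemmas~\ref{super} and \ref{l:Properly}). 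As it stands, the implication $|I|=2\Rightarrow|P(H)|=p^{2}|C(H)|$ is established only when $\left|S^{\dagger}\right|=2$; for larger $S^{\dagger}$ it is asserted, not proven.
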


This theorem plays a major role in proving various computational results about the the Chermak-Delgado measure of subgroups of $\h(p^{n}).$ In addition, we define and study an analogue of the Chermak-Delgado measure termed the \emph{pseudo Chermak-Delgado measure}. We provide numerous results in Section 7 that relate the two notions. These lead us to the main theorem in this paper:

\begin{thmn}[\ref{MAIN}]
For any prime $p,$ we have $m^{\ast}(\hb{n}) = p^{4n}.$
\end{thmn}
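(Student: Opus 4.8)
The plan is to prove the two inequalities $m^{\ast}(\hb{n}) \ge p^{4n}$ and $m^{\ast}(\hb{n}) \le p^{4n}$ separately, the second by induction on $n$ using the short exact sequence from the introduction. For the lower bound I would first record that the centre $Z := Z(\hb{n})$ consists of the elements whose first two vector coordinates vanish, so that $|Z| = p^{n}$ and $C(\hb{n}) = Z$. Consequently $m(\hb{n}) = |\hb{n}|\,|Z| = p^{3n}\cdot p^{n} = p^{4n}$, which already gives $m^{\ast}(\hb{n}) \ge p^{4n}$. The same value is of course attained by $Z$ itself, since $C(Z) = \hb{n}$.

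For the upper bound I must show $m(H) \le p^{4n}$ for every $H \le \hb{n}$. The engine is the observation that the pseudocentralizer computes a centralizer in the quotient. Writing $q\colon \hb{n}\to\hb{n-1}$ for the map in the sequence, one checks directly that $\ker q \subseteq P(H)$ (for $g\in\ker q$ every commutator $[s,g]$ lies in the ``bottom'' copy $p^{n-1}\Z_{p^n}$ of the centre, hence in $\ker q$) and that $q$ carries $P(H)$ onto $C_{\hb{n-1}}\!\left(q(H)\right)$, the kernel of $q|_{P(H)}$ being $\ker q$. Therefore
\[
|P(H)| = |\ker q|\cdot\bigl|C_{\hb{n-1}}(q(H))\bigr| = p^{3}\,\bigl|C_{\hb{n-1}}(q(H))\bigr|.
\]
Feeding this into Theorem~\ref{main}, which yields $|P(H)| = p^{|I|}|C(H)|$ for a special generating set $S$ of $H$ with injective set $I$, I obtain $|C(H)| = p^{\,3-|I|}\,|C_{\hb{n-1}}(q(H))|$, and hence, after multiplying by $|H| = |H\cap\ker q|\,|q(H)|$,
\[
m(H) = |H\cap\ker q|\;p^{\,3-|I|}\;m\bigl(q(H)\bigr),
\]
where $m(q(H))$ is computed inside $\hb{n-1}$.

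The induction now runs as follows. Assuming $m^{\ast}(\hb{n-1}) = p^{4(n-1)}$, so that $m(q(H)) \le p^{4n-4}$, the displayed identity gives $m(H) \le |H\cap\ker q|\,p^{\,4n-1-|I|}$, and it remains to prove the key bound $|H\cap\ker q| \le p^{\,1+|I|}$. Granting this, $m(H)\le p^{4n}$ for all $H$, whence $m^{\ast}(\hb{n})\le p^{4n}$; combined with the lower bound this closes the induction, the base case $n=1$ being the extraspecial group of order $p^{3}$, for which $m^{\ast}=p^{4}$ is a short direct computation. One also checks that the formula is uniform across the three cases of Theorem~\ref{main}, including the central case $|I|=0$.

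The hard part is the key bound $|H\cap\ker q|\le p^{1+|I|}$: it is here that the combinatorics of the injective set must be married to the structure of $\ker q\cong\Z_{p}^{3}$, showing that the larger the intersection of $H$ with $\ker q$, the more independent ``mod $p^{n-1}$'' centralizing directions $H$ must carry, that is, the larger $|I|=\dim_{\Z_p} P(H)/C(H)$ must be (the quotient being elementary abelian by Lemma~\ref{diagramChasing}). I expect this to follow by choosing the special generating set compatibly with the filtration induced by $q$ and tracking which generators contribute to $I$. As a conceptual check that bypasses the induction entirely: since the commutator pairing $\hb{n}/Z\times \hb{n}/Z\to Z$ is the perfect alternating pairing on $\Z_{p^n}^{2}$, one has $|\overline{H}|\,|\overline{H}^{\perp}| = p^{2n}$ for the image $\overline H$ of $H$ in $\hb{n}/Z$, giving $m(H) = |H\cap Z|\,p^{3n} \le p^{4n}$ at once, with equality exactly when $Z\le H$. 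This both confirms $m^{\ast}(\hb{n}) = p^{4n}$ and pins down the Chermak-Delgado lattice as the centralizer-closed subgroups containing $Z$.
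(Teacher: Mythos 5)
Your announced inductive route is not complete as written: as you yourself flag, everything reduces to the key bound $|H\cap\ker q|\le p^{\,1+|I|}$, and there you only record an expectation, not a proof. That is a genuine gap, although a fillable one: if $|I|=0$ then $H\le Z(\hb{n})$ is cyclic by Lemma~\ref{emptyI}, and if $|I|=1$ then Lemma~\ref{Isize} together with the Remark preceding Corollary~\ref{premain2} lets you take $H=\left<h_{1},\,z\right>$ with $z$ central, so $H$ is abelian of rank at most $2$; since $\ker q\cong\Z_{p}^{3}$ is elementary abelian, $H\cap\ker q\le\Omega_{1}(H)$ has order at most $p$, respectively $p^{2}$, while for $|I|=2$ the trivial bound $|\ker q|=p^{3}$ suffices. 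Note also that your identity $m(H)=|H\cap\ker q|\,p^{\,3-|I|}\,m(q(H))$ is the paper's Lemma~\ref{post-main} in disguise (via $\delta(H)=3-|I|$); where you truly diverge is in closing the induction. The paper never needs your key bound: it bounds $m^{\ast}(\hb{n})\le m^{\ast}_{s}(\hb{n})=p^{4n+2}$ using Lemmas~\ref{compare} and \ref{general-p-n} together with the lattice correspondence $\pc(G)=q^{-1}(\ch(G_{2}))$ of Theorem~\ref{inv-image}, and then kills the excess exponents $i=1,2$ by casework on $\delta(K)$ using Lemmas~\ref{emptyI} and \ref{inj-size}.

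Your closing ``conceptual check,'' on the other hand, is essentially a complete proof and a genuinely different, shorter route than the paper's. By Lemmas~\ref{conmutador} and \ref{l:EquivCentPseudo}, the commutator induces the perfect alternating form $\left<\bar{x},\,\bar{y}\right> = x_{1}y_{3}-x_{3}y_{1}$ on $M=\hb{n}/Z\cong\Z_{p^{n}}^{2}$, the centralizer $C(H)$ is the full preimage of $\overline{H}^{\perp}$, and $|H|=|H\cap Z|\,|\overline{H}|$, so that $m(H)=|H\cap Z|\,|\overline{H}|\,|\overline{H}^{\perp}|\,p^{n}=|H\cap Z|\,p^{3n}\le p^{4n}$, with equality precisely when $Z\le H$. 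The one step you must not leave implicit is $|\overline{H}|\,|\overline{H}^{\perp}|=p^{2n}$: since $\Z_{p^{n}}$ is not a field this is not the usual symplectic dimension count, but it follows because $\Z_{p^{n}}$ is self-injective, so the perfect pairing identifies $M$ with $\mathrm{Hom}\left(M,\,\Z_{p^{n}}\right)$ and the restriction map $\mathrm{Hom}\left(M,\,\Z_{p^{n}}\right)\rightarrow\mathrm{Hom}\left(\overline{H},\,\Z_{p^{n}}\right)$ is onto with kernel $\overline{H}^{\perp}$, giving $|M|=|\overline{H}|\,|\overline{H}^{\perp}|$. With that supplied, your pairing argument proves Theorem~\ref{MAIN} in a few lines without induction, pseudocentralizers, or Theorem~\ref{main}, and yields strictly more: $\ch(\hb{n})$ is exactly the set of subgroups containing $Z(\hb{n})$ (each automatically centralizer-closed, since $\overline{H}^{\perp\perp}=\overline{H}$). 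What the paper's longer route buys instead is the pseudo-measure machinery and Theorem~\ref{inv-image}, which are reused for the final Corollary $\pc(\h(p^{n}))\subseteq\ch(\h(p^{n}))$.
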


\section{Notation and Basic Facts}

In this section, we provide some notation and background material that will be used in the paper. We assume throughout that all groups are finite. Let $G$ be such a group.

\begin{itemize}
\item The centralizer of $a \in G,$ commonly written as $C_{G}(a),$ is denoted by $C(a)$ ($G$ will always be understood from the context). Similarly, the centralizer of $H \leq G$ is written as $C(H)$ rather than the usual $C_{G}(H).$
\item $Z(G)$ is the center of $G.$
\item $|G|$ is the order of $G.$  For $g \in G,$ we write $o(g)$ for its order.
\item $G^{n}$ is the external direct product $G \times \cdots \times G$ with $n$ direct factors.
\item If $S = \{g_{1}, \, g_{2}, \, \ldots, \, g_{k}\} \subseteq G,$ then $\left< S \right> = \left<g_{1}, \, g_{2}, \, \ldots, \, g_{k}\right>$ denotes the subgroup of $G$ generated by $S.$
\item The commutator of $x, \, y \in G$ is $[x, \, y] = x^{-1}y^{-1}xy$ and the conjugate of $x$ by $y$ is $x^{y} = y^{-1}xy.$
\item $[G, \, G] = \left<[g, \, h] \, | \, g, \, h \in G\right>$ is the derived subgroup of $G.$
\item We write $\Z_{n}$ for the ring $(\Z_{n}, \ +, \ \cdot).$
\end{itemize}

Additional notations and definitions will be presented when they first appear in the paper.

\section{The $q$-pseudocentralizer}

In this section, we introduce the notion of the $q$-pseudocentralizer and provide some of its elementary properties.

\begin{Definition}\label{d:Pseudocentralizer}
Let $G$ and $K$ be groups, and suppose that $q : G \rightarrow K$ is a homomorphism. The \emph{$q$-pseudocentralizer} of a non-empty subset $S$ of $G$ is
\[
P(S; \, q) = \{g \in G \, | \, [s, \, g] \in \ker q \hbox{ for all } s \in S\}.
\]
\end{Definition}

If $S = \{s\},$ then we write $P(s; \, q).$ Clearly, $P(S; \, q) \neq \emptyset$ since it contains $1.$

\vspace{.15in}

We will be interested in the case when the homomorphism $q$ arises in a short exact sequence. Let $G, \, G_{1},$ and $G_{2}$ be groups. In the paper, we will refer to the short exact sequence:
\begin{equation}\label{shes}
\xymatrix{1 \ar[r] & G_{1} \ar[r]^{f} & G \ar[r]^{q} & G_{2} \ar[r] & 1}.
\end{equation}
This means that $f$ and $q$ are group homomorphisms, $f$ is injective, $q$ is surjective, and im $f = \ker q.$ Note that ``$1$" in the sequence means the trivial group $\{1\}.$

\begin{Lemma}
If $S \subseteq G$ and $H \lhd G$ in the short exact sequence (\ref{shes}), then we have $P(S; \, q) \leq G$ and $P(H; \, q) \lhd G.$
\end{Lemma}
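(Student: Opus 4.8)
The plan is to verify the two claims directly from the definition of the $q$-pseudocentralizer, using the fact that $\ker q$ is a normal subgroup of $G$. The statement has two parts: first that $P(S;\,q)$ is always a subgroup of $G$ for any nonempty subset $S$, and second that when $H$ is a \emph{normal} subgroup the pseudocentralizer $P(H;\,q)$ is itself normal in $G$. I would handle these separately, since the subgroup property is a purely computational check whereas normality requires exploiting the hypothesis $H \lhd G$.

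First I would establish that $P(S;\,q) \le G$. Writing $N = \ker q$, the condition $g \in P(S;\,q)$ says $[s,\,g] \in N$ for every $s \in S$. The element $1$ lies in $P(S;\,q)$ since $[s,\,1] = 1 \in N$, so the set is nonempty. For closure and inverses I would use the standard commutator identities $[s,\,gh] = [s,\,h][s,\,g]^{h}$ and $[s,\,g^{-1}] = \bigl([s,\,g]^{-1}\bigr)^{g^{-1}}$. If $[s,\,g]$ and $[s,\,h]$ both lie in $N$, then because $N$ is normal in $G$ the conjugate $[s,\,g]^{h}$ also lies in $N$, and hence $[s,\,gh] \in N$; the inverse case is analogous. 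Thus $P(S;\,q)$ is closed under multiplication and inversion, giving $P(S;\,q) \le G$.

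Next I would prove $P(H;\,q) \lhd G$ under the assumption $H \lhd G$. Given $g \in P(H;\,q)$ and an arbitrary $x \in G$, I must show $x^{-1}gx \in P(H;\,q)$, i.e.\ that $[h,\,x^{-1}gx] \in N$ for every $h \in H$. The key is that for any $h \in H$ the conjugate $xhx^{-1}$ again lies in $H$ (by normality of $H$), so $[xhx^{-1},\,g] \in N$ by the defining property of $g \in P(H;\,q)$. I would then relate $[h,\,x^{-1}gx]$ to a conjugate of $[xhx^{-1},\,g]$ via a commutator identity (conjugating the whole commutator by $x$), and invoke normality of $N = \ker q$ to conclude that this conjugate still lies in $N$. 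This yields $x^{-1}gx \in P(H;\,q)$, establishing normality.

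The main obstacle, such as it is, will be bookkeeping the commutator manipulations cleanly: choosing the right identity so that the conjugate $xhx^{-1} \in H$ surfaces naturally, and then tracking how conjugation by $x$ moves the commutator while staying inside $N$. Both hypotheses are used essentially here, namely that $N = \ker q \lhd G$ (to absorb conjugates of elements of $N$) and that $H \lhd G$ (to keep the conjugated argument inside $H$). No deep machinery is needed; the proof is a two-fold application of the normality of $N$ together with the normality of $H$ in the second part, organized around standard commutator expansions.
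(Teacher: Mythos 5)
Your proposal is correct and follows essentially the same route as the paper: the subgroup part rests on the standard commutator expansions together with normality of $\ker q$ (the paper checks $g_{1}g_{2}^{-1}$ in one step where you check closure and inverses separately), and the normality part uses exactly the paper's identity $\bigl[h, \, z^{g}\bigr] = \bigl[h^{g^{-1}}, \, z\bigr]^{g}$ with $h^{g^{-1}} \in H$ supplied by $H \lhd G$. The only cosmetic difference is that the paper concludes by applying $q$ and using that homomorphisms preserve conjugation, whereas you invoke normality of $\ker q$ directly; these are equivalent.
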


\begin{proof}
Let $g_{1}, \, g_{2} \in P(S; \, q).$ Using the commutator identities (see Lemma 1.4 of \cite{CMZ}), we see that
\[
\Bigl[s, \, g_{1}g^{-1}_{2}\Bigr] = \Bigl[s, \, g_{2}^{-1}\Bigr]\Bigl[s, \, g_{1}\Bigr]^{g_{2}^{-1}} = \Bigl([s, \, g_{2}]^{g_{2}^{-1}}\Bigr)^{-1}[s, \, g_{1}]^{g^{-1}_{2}}.
\]
Since $[s, \, g_{1}] \in \ker q,$ $[s, \, g_{2}] \in \ker q,$ and $\ker q \unlhd G,$ we have that $[s, \, g_{1}]^{g^{-1}_{2}} \in \ker q$ and $[s, \, g_{2}]^{g_{2}^{-1}} \in \ker q.$ It follows that $P(S; \, q) \leq G.$

Next let $z \in P(H; \, q).$ If $g \in G$ and $h \in H,$ then $h^{g^{-1}} \in H$ because $H \lhd G.$ Hence, $\Bigl[h^{g^{-1}}, \, z\Bigr] \in \ker q$ and, consequently,
\[
q\Bigl(\Bigl[h, \, z^{g}\Bigr]\Bigr) = q\Bigl(\Bigl[h^{g^{-1}}, \, z\Bigr]^{g}\Bigr) = q\Bigl(\Bigl[h^{g^{-1}}, \, z\Bigr]\Bigr)^{q(g)} = 1.
\]
And so, $z^{g} \in P(H; \, q).$
\end{proof}

\begin{Notation}
Whenever the map $q$ is fixed, we write $P(S)$ for $P(S; \, q)$ and call $P(S)$ the \emph{pseudocentralizer} of $S.$ Whether or not $S$ is a subgroup
of $G$ will be apparent from the context.
\end{Notation}

\begin{Example}
Let $G$ be a group and $id : G \rightarrow G$ the identity homomorphism.

\noindent 1) For the short exact sequence $\xymatrix{1 \ar[r] & G \ar[r]^{id} & G \ar[r]^{q} & 1 \ar[r] & 1}$
we have $P(H) = G$ for any $H \le G.$\\

\noindent 2) For the short exact sequence $\xymatrix{1 \ar[r] & 1 \ar[r]^{f} & G \ar[r]^{id} & G \ar[r] & 1}$
we have $P(H) = C(H)$ for any $H \le G.$
\end{Example}

\begin{Lemma}\label{P-properties}
For any short exact sequence (\ref{shes}) with $H \le G$ and $K \le G,$ and any subset $S$ of $G,$ the following properties hold:
\begin{enumerate}
\item $C(S) \le P(S)$ for any $S\subseteq  G$;

\vspace{.05in}

\item $q(P(H)) = C(q(H))$;

\vspace{.05in}

\item $\ker q \le P(H)$;

\vspace{.05in}

\item $|P(H)| = |G_{1}||C(q(H))|$;

\vspace{.05in}

\item $\displaystyle P(S) = \bigcap_{s \in S} P(s)$;

\vspace{.05in}

\item $\displaystyle P(HK) = P(H) \cap P(K)$;

\vspace{.05in}

\item $P(HK) = P(\left<HK\right>)$;

\vspace{.05in}

\item $P(H) P(K) \le P(H \cap K)$;

\vspace{.05in}

\item If $S$ is a generating set for $H,$ then $P(H) = P(S)$;

\vspace{.05in}

\item If $S_{1} \subseteq S_{2} \subseteq G,$ then $P(S_{2}) \le P(S_{1}).$
\end{enumerate}
\end{Lemma}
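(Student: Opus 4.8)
The plan is to prove all ten properties of $P(S)$ by exploiting the defining condition $[s,g]\in\ker q$ together with the standard commutator identities and, crucially, the identity $q(P(H))=C(q(H))$, which is the linchpin that lets me transfer questions about pseudocentralizers in $G$ into questions about ordinary centralizers in $G_2$. Let me sketch the order in which I would establish them, since several items depend on earlier ones.

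\smallskip

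First I would dispatch the ``soft'' containments that follow directly from the definition. For (1), if $g\in C(S)$ then $[s,g]=1\in\ker q$ for all $s\in S$, so $g\in P(S)$. For (3), if $g\in\ker q$ then for any $h\in H$ we have $q([h,g])=[q(h),q(g)]=[q(h),1]=1$, whence $\ker q\le P(H)$. For (5), note that $g\in P(S)$ iff $[s,g]\in\ker q$ for \emph{all} $s\in S$, which is literally the condition $g\in\bigcap_{s\in S}P(s)$, so the equality is immediate from unwinding the definition. Property (10) is equally direct: a larger set $S_2$ imposes more commutator conditions, so $P(S_2)\le P(S_1)$; formally it also follows from (5). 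Property (9) then follows from (5) and the fact that $[s_1s_2,g]$ lies in $\ker q$ whenever $[s_1,g]$ and $[s_2,g]$ do (using the identity $[s_1s_2,g]=[s_1,g]^{s_2}[s_2,g]$ and normality of $\ker q$), so membership in $P(S)$ propagates from a generating set $S$ to all of $H=\langle S\rangle$; this gives $P(\langle S\rangle)=P(S)$.

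\smallskip

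The key step is (2), $q(P(H))=C(q(H))$, and I expect this to be where the real content lies. For the forward inclusion, if $g\in P(H)$ then for every $h\in H$, $q([h,g])=[q(h),q(g)]=1$ in $G_2$, so $q(g)$ commutes with every $q(h)$, i.e. $q(g)\in C(q(H))$. The reverse inclusion is the subtler direction and uses surjectivity of $q$: given $y\in C(q(H))$, choose $g\in G$ with $q(g)=y$; then for each $h\in H$, $q([h,g])=[q(h),y]=1$, so $[h,g]\in\ker q$, meaning $g\in P(H)$ and $y=q(g)\in q(P(H))$. Once (2) is in hand, property (4) is a counting consequence: since $\ker q\le P(H)$ by (3) and $q$ restricted to $P(H)$ is a surjection onto $C(q(H))$ with kernel $\ker q=\operatorname{im}f\cong G_1$, the first isomorphism theorem gives $|P(H)|=|\ker q|\,|q(P(H))|=|G_1|\,|C(q(H))|$.

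\smallskip

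The remaining items concern how $P$ interacts with products and intersections of subgroups. For (6), $P(HK)=P(H)\cap P(K)$, I would observe that $HK$ generates the same subgroup as $H\cup K$ and apply (5)/(9): $g$ pseudocentralizes every element of $HK$ iff it pseudocentralizes the generators coming from both $H$ and $K$, which is exactly $P(H)\cap P(K)$; alternatively one checks directly that $[hk,g]\in\ker q$ for all $h,k$ is equivalent to $[h,g],[k,g]\in\ker q$. Property (7), $P(HK)=P(\langle HK\rangle)$, is then immediate from (9) since $HK$ generates $\langle HK\rangle$. For (8), $P(H)P(K)\le P(H\cap K)$, the containment $H\cap K\subseteq H$ gives $P(H)\le P(H\cap K)$ by (10), and likewise $P(K)\le P(H\cap K)$; since $P(H\cap K)$ is a subgroup (it is closed under products), it contains the product set $P(H)P(K)$. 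The main obstacle throughout is simply keeping the commutator bookkeeping correct—ensuring that the identities $[x,yz]=[x,z][x,y]^z$ and $[xy,z]=[x,z]^y[y,z]$ are applied with the right conjugations and that $\ker q\unlhd G$ is invoked to absorb the conjugating factors—but no single step requires more than careful manipulation once (2) is established.
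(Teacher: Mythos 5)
Your proposal is correct and matches the paper's intent: the paper leaves this lemma to the reader, remarking only that (4) uses Theorem~\ref{L1}, and your first-isomorphism-theorem count for (4) is exactly that theorem specialized to the case $\ker q \le P(H)$ (supplied by your item (3)), with $q(P(H)) = C(q(H))$ from (2) identifying the image. One small point to patch in (9): elements of $\left<S\right>$ are words in $S \cup S^{-1}$, so besides the product identity you should also note that $[s^{-1}, g] = \bigl([s, g]^{-1}\bigr)^{s^{-1}} \in \ker q$ by normality of $\ker q$ (or simply that in a finite group $s^{-1}$ is a positive power of $s$), after which the propagation argument is complete.
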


The proof of Lemma~\ref{P-properties} is left for the reader. We remark that $(6)$ and $(7)$ hold even if $HK$ is not a subgroup of $G.$ The proof of (4) makes use of the next theorem which will be needed again later. Its proof can be found in the Appendix.

\begin{Theorem}\label{L1}
Suppose that $H \le G$ in the short exact sequence (\ref{shes}). If $\widehat{H}_{1} = f^{-1}(H \cap \ker q)$ and $\widehat{H}_{2} = q(H),$ then
$|H| = \left|\widehat{H}_{1}\right|\left|\widehat{H}_{2}\right|.$
\end{Theorem}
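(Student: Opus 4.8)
The plan is to reduce the statement to two standard facts: the first isomorphism theorem applied to the restriction of $q$ to $H,$ and the injectivity of $f.$ First I would restrict the homomorphism $q$ to the subgroup $H,$ obtaining a homomorphism $q|_{H} : H \rightarrow G_{2}$ whose image is precisely $q(H) = \widehat{H}_{2}$ and whose kernel is $H \cap \ker q.$ The first isomorphism theorem then yields $H/(H \cap \ker q) \cong \widehat{H}_{2},$ and passing to orders gives
\[
|H| = |H \cap \ker q| \cdot \left|\widehat{H}_{2}\right|.
\]

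It therefore remains to identify $|H \cap \ker q|$ with $\left|\widehat{H}_{1}\right|,$ and here I would invoke the injectivity of $f.$ Since $\operatorname{im} f = \ker q,$ the intersection $H \cap \ker q$ is contained in the image of $f,$ so every element of $H \cap \ker q$ has a unique preimage under $f.$ Thus $f$ restricts to a bijection from $\widehat{H}_{1} = f^{-1}(H \cap \ker q)$ onto $H \cap \ker q;$ in particular $\widehat{H}_{1}$ is a subgroup of $G_{1}$ and $\left|\widehat{H}_{1}\right| = |H \cap \ker q|.$ Substituting into the displayed equality gives $|H| = \left|\widehat{H}_{1}\right| \cdot \left|\widehat{H}_{2}\right|,$ as desired.

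I do not anticipate a genuine obstacle, since the argument is a clean counting computation built on the isomorphism theorem. The only point that requires a moment's care is the claim that $f$ carries $\widehat{H}_{1}$ bijectively onto all of $H \cap \ker q$: injectivity alone guarantees that $f$ is one-to-one on $\widehat{H}_{1},$ but to recover the full set $H \cap \ker q$ as the image one must use that $H \cap \ker q \subseteq \operatorname{im} f = \ker q,$ so that $f\bigl(f^{-1}(H \cap \ker q)\bigr) = H \cap \ker q.$ Once this surjectivity-onto-image observation is in place, the two cardinalities match exactly and the proof concludes.
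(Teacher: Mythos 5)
Your proof is correct. The identity $|H| = |H \cap \ker q|\,\bigl|\widehat{H}_{2}\bigr|$ follows exactly as you say from the first isomorphism theorem applied to $q|_{H}$ (plus Lagrange, using the paper's standing assumption that all groups are finite), and your identification $\bigl|\widehat{H}_{1}\bigr| = |H \cap \ker q|$ is right, including the one point that needs care and which you flag explicitly: $f\bigl(f^{-1}(H \cap \ker q)\bigr) = H \cap \ker q$ precisely because $H \cap \ker q \subseteq \operatorname{im} f = \ker q.$ The paper takes a more hands-on route: instead of citing the isomorphism theorem, it fixes a transversal $\{x_{y} \mid y \in \widehat{H}_{2}\}$ with $q(x_{y}) = y$ and builds an explicit set bijection $\psi : \widehat{H}_{1} \times \widehat{H}_{2} \rightarrow H$ given by $\psi\bigl(\widehat{h}_{1}, \widehat{h}_{2}\bigr) = f\bigl(\widehat{h}_{1}\bigr)x_{\widehat{h}_{2}},$ checking injectivity and surjectivity by direct computation. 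The underlying decomposition is the same in both arguments --- $H$ is fibered over $q(H)$ with each fiber a coset of $H \cap \ker q$ --- so the proofs are morally identical; yours buys brevity and reliance only on standard theorems, while the paper's buys a self-contained, constructive normal form $h = f\bigl(\widehat{h}_{1}\bigr)x_{\widehat{h}_{2}}$ for elements of $H$ that avoids quotient groups entirely. Incidentally, your version sidesteps a small implicit choice in the paper's argument: for $\psi$ to land in $H,$ the representatives $x_{y}$ must be chosen inside $H$ (possible since $y \in q(H)$), a fact the paper uses tacitly in its surjectivity step when it asserts $hx_{\widehat{h}_{2}}^{-1} \in H.$
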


\begin{Lemma}\label{abelian-case}
If $G_{2}$ is abelian in the short exact sequence (\ref{shes}), then $P(H) = G$ for any $H \le G$.
\end{Lemma}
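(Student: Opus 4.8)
The plan is to exploit the single structural fact that distinguishes this situation: since $q$ is a group homomorphism, it carries commutators to commutators, and the abelianness of $G_{2}$ forces every such image to be trivial. Concretely, for any $x, \, y \in G$ we have $q([x, \, y]) = [q(x), \, q(y)]$, so if $G_{2}$ is abelian then $q([x, \, y]) = 1$ for \emph{every} pair $x, \, y \in G$, independently of which subgroup $H$ we started with.

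The steps I would carry out are as follows. First, fix an arbitrary $g \in G$ and an arbitrary $h \in H$. Second, compute $q([h, \, g]) = [q(h), \, q(g)]$ and observe that this element lies in $[G_{2}, \, G_{2}]$, which is trivial because $G_{2}$ is abelian; hence $[h, \, g] \in \ker q$. Third, since $h \in H$ was arbitrary, this shows $g \in P(H)$ directly from Definition~\ref{d:Pseudocentralizer}. Finally, since $g \in G$ was arbitrary, we conclude $P(H) = G$, as desired.

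There is essentially no obstacle here: the argument is a one-line verification once the commutator-preserving property of $q$ is invoked, and the reverse containment $P(H) \le G$ is automatic. As an independent cross-check, the same conclusion can be recovered from Lemma~\ref{P-properties}: part $(2)$ gives $q(P(H)) = C(q(H))$, which is all of $G_{2}$ when $G_{2}$ is abelian, while part $(3)$ gives $\ker q \le P(H)$; combining these with the order formula in part $(4)$ yields $|P(H)| = |G_{1}||C(q(H))| = |G_{1}||G_{2}|$, and by Theorem~\ref{L1} (applied to $H = G$, where $\widehat{G}_{1} = G_{1}$ and $\widehat{G}_{2} = G_{2}$) this equals $|G|$, again forcing $P(H) = G$.
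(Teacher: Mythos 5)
Your proposal is correct and its main argument is exactly the paper's own proof: apply $q$ to $[h,\,g]$, use $q([h,\,g]) = [q(h),\,q(g)] = 1$ since $G_{2}$ is abelian, and conclude $[h,\,g] \in \ker q$, so $g \in P(H)$. The counting cross-check via Lemma~\ref{P-properties} and Theorem~\ref{L1} is also valid but adds nothing the direct argument doesn't already settle.
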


\begin{proof}
If $h \in H$ and $g \in G,$ then $q([h, \, g]) = [q(h), \, q(g)].$ Since $G_{2}$ is abelian, $[q(h), \, q(g)]= 1$ and, thus, $[h, \, g] \in \ker q.$ And so, $g \in P(H)$.
\end{proof}

\section{the mod $r$-Heisenberg group}

\noindent \textbf{From this point on, $r > 1$ will always be some natural number and $p$ is a fixed prime number.}

\vspace{.1in}

\begin{Definition}\label{d:HeisenbergGroup}
For any ring $\Z_{r},$
\[
\h(r) = \left\{
\begin{bmatrix}
1 & a_{1} & a_{2}\\
0 & 1 & a_{3}\\
0 & 0 & 1
\end{bmatrix} \, \middle| \, a_{1}, \, a_{2}, \, a_{3} \in \Z_{r}
\right\}
\]
is the \emph{mod $r$ Heisenberg group} with matrix multiplication as the group operation.
\end{Definition}

\vspace{.1in}

\begin{Notation}
The matrix $a = \begin{bmatrix}
1 & a_{1} & a_{2}\\
0 & 1 & a_{3}\\
0 & 0 & 1
\end{bmatrix}$
will be written in the vector form $a = (a_{1}, \, a_{2}, \, a_{3}).$ We call $a_{i}$ the $i^{th}$ \emph{component} of $a.$
\end{Notation}

\vspace{.1in}

Using this notation and standard matrix multiplication, we get
\begin{eqnarray*}
(a_{1}, \, a_{2}, \, a_{3})(b_{1}, \, b_{2}, \, b_{3}) & = & (a_{1} + b_{1}, \, a_{2} + b_{2} + a_{1}b_{3}, \, a_{3} + b_{3}) \hbox{ \ \ and } \\
                      (a_{1}, \, a_{2}, \, a_{3})^{-1} & = & (-a_{1}, \, a_{1}a_{3} - a_{2}, -a_{3}).
\end{eqnarray*}
In the above, all calculations are obviously done in the ring $\Z_{r}.$ Observe that the vector $(0, \, 0, \, 0)$ represents the multiplicative identity in $\h(r)$.

\begin{Notation}
The $i^{th}$ component of $g \in \h(r)$ will usually be written as $g|_{i}.$

\begin{itemize}
\item When we use this notation \textbf{outside} of a vector, we will write ``mod" to emphasize that we are calculating modulo $r$ in a component. Thus, for the vector $c = (a_{1} + b_{1}, \, a_{2} + b_{2} + a_{1}b_{3}, \, a_{3} + b_{3}) \hbox{ in } \h(r),$ we write
\[
\indent c|_{i} = (a_{i} + b_{i}) \hbox{ mod } r \hbox{ for } i = 1, \, 3 \hbox{ \ \ and \ \ } c|_{2} = (a_{2} + b_{2} + a_{1}b_{3}) \hbox{ mod } r.
\]
Note that we did not write ``$c|_{i} \hbox{ mod } r$".

\vspace{.05in}

\item Unless ambiguity arises, when this notation is used \textbf{inside} of a vector (see Lemma~\ref{known} for example), operations are understood to be modulo $r$ without mention and we will not write ``mod".
\end{itemize}
\end{Notation}

\vspace{.15in}

A simple calculation and induction on $n$ give:

\begin{Lemma}\label{known}
For any natural number $n > 1$ and $a \in \h(r),$ we have
\[
a^{n} = \left(na|_{1}, \, na|_{2} + {n\choose 2}a|_{1}a|_{3}, \, na|_{3}\right).
\]
\end{Lemma}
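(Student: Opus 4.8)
The plan is to induct on $n$ using the product rule $(a_{1}, a_{2}, a_{3})(b_{1}, b_{2}, b_{3}) = (a_{1} + b_{1}, a_{2} + b_{2} + a_{1}b_{3}, a_{3} + b_{3})$ recorded just above the statement. For the base case $n = 2$ I would compute $a^{2} = a \cdot a$ directly: setting $b = a$ in the product rule, the first and third components double to $2a|_{1}$ and $2a|_{3}$, while the middle component becomes $a|_{2} + a|_{2} + a|_{1}a|_{3} = 2a|_{2} + a|_{1}a|_{3}$. Since ${2 \choose 2} = 1$, this agrees with the asserted formula.

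For the inductive step, assume the formula holds for $a^{n}$. I would then form $a^{n+1} = a^{n} \cdot a$ by applying the product rule with left factor $a^{n} = \left(na|_{1}, \, na|_{2} + {n \choose 2}a|_{1}a|_{3}, \, na|_{3}\right)$ and right factor $a = (a|_{1}, \, a|_{2}, \, a|_{3})$. The first and third components are immediate, yielding $(n+1)a|_{1}$ and $(n+1)a|_{3}$. The middle component is where the quadratic term accumulates:
\[
\left(na|_{2} + {n \choose 2}a|_{1}a|_{3}\right) + a|_{2} + (na|_{1})a|_{3} = (n+1)a|_{2} + \left({n \choose 2} + n\right)a|_{1}a|_{3}.
\]

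The only step needing a moment's thought is the Pascal-type identity ${n \choose 2} + n = {n+1 \choose 2}$, which follows at once from $\frac{n(n-1)}{2} + n = \frac{n(n+1)}{2}$; substituting it turns the middle component into $(n+1)a|_{2} + {n+1 \choose 2}a|_{1}a|_{3}$ and closes the induction. I do not anticipate a genuine obstacle here. The one point worth flagging is that ${n \choose 2}$ must be read as the integer $\frac{n(n-1)}{2}$ (which is always an integer, since $n(n-1)$ is even) reduced modulo $r$, rather than as $n(n-1)\cdot 2^{-1}$ in $\Z_{r}$; this distinction matters precisely when $2$ is not invertible in $\Z_{r}$. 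Because the binomial identity is an identity of integers, it descends to $\Z_{r}$ for every $r$, so the computation goes through uniformly.
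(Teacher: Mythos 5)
Your proof is correct and follows exactly the route the paper intends, which simply says ``a simple calculation and induction on $n$'': the base case $n=2$ from the product rule, the inductive step via $a^{n+1} = a^{n}\cdot a$, and the Pascal identity $\binom{n}{2} + n = \binom{n+1}{2}$. Your flag that $\binom{n}{2}$ is an integer reduced modulo $r$ (rather than $n(n-1)\cdot 2^{-1}$ in $\Z_{r}$) is a worthwhile precision the paper leaves implicit, but it does not constitute a different approach.
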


\vspace{.1in}

The proof of the next lemma is straightforward.

\begin{Lemma}\label{conmutador}
Suppose that $a, \, b \in \h(r).$
\begin{enumerate}
\item $[a, \, b] = (0, \, a|_{1}b|_{3} - a|_{3}b|_{1}, \, 0).$

\vspace{.05in}

\item $Z(\h(r)) = [\h(r), \, \h(r)] = \{(0, \, x, \, 0) \, | \, x \in \Z_{r}\}.$

\vspace{.05in}

\item $\h(r)$ is nilpotent of class $2.$
\end{enumerate}
\end{Lemma}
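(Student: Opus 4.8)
The plan is to establish all three parts by direct computation from the multiplication and inversion formulas, using part (1) as the engine for parts (2) and (3).

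For part (1), I would compute the commutator $[a, \, b] = a^{-1}b^{-1}ab$ by hand. The cleanest route is to observe that $a^{-1}b^{-1} = (ba)^{-1}$, so that $[a, \, b] = (ba)^{-1}(ab)$. First I would form $ab$ and $ba$ using the given product rule, then apply the inversion formula to $ba$, and finally multiply $(ba)^{-1}$ by $ab$. The first and third components collapse immediately, being of the form $-(a|_1 + b|_1) + (a|_1 + b|_1)$ and the analogous expression in the third slot, so only the middle component requires attention: there the two copies of $(a|_1+b|_1)(a|_3+b|_3)$ cancel and the surviving terms reduce to $a|_1 b|_3 - a|_3 b|_1$. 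This yields the claimed form $[a, \, b] = (0, \, a|_1 b|_3 - a|_3 b|_1, \, 0)$.

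For part (2), I would treat the two equalities separately, both leveraging part (1). For the derived subgroup, part (1) shows that every commutator lies in $T := \{(0, \, x, \, 0) \mid x \in \Z_r\}$, and since $T$ is a subgroup, the subgroup generated by the commutators, namely $[\h(r), \h(r)]$, is contained in $T$. For the reverse inclusion, taking $a = (1, \, 0, \, 0)$ and $b = (0, \, 0, \, 1)$ gives $[a, \, b] = (0, \, 1, \, 0)$, and by Lemma~\ref{known} one has $(0, \, 1, \, 0)^{k} = (0, \, k, \, 0)$, so this single commutator already generates all of $T$; hence $[\h(r), \h(r)] = T$. For the center, an element $g$ is central iff $[g, \, b] = (0, \, 0, \, 0)$ for every $b$, which by part (1) means $g|_1 b|_3 - g|_3 b|_1 = 0$ for all $b|_1, \, b|_3 \in \Z_r$; specializing $b$ to $(1, \, 0, \, 0)$ and to $(0, \, 0, \, 1)$ forces $g|_3 = g|_1 = 0$, while conversely every $(0, \, x, \, 0)$ is visibly central. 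Thus $Z(\h(r)) = T$ as well.

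Part (3) is then immediate: since $[\h(r), \h(r)] = Z(\h(r))$ is contained in the center, the next term of the lower central series, $[\,[\h(r), \h(r)], \, \h(r)\,]$, is trivial, so the nilpotency class is at most $2$; and because $r > 1$ the element $(0, \, 1, \, 0)$ is a nontrivial commutator, so $\h(r)$ is nonabelian and the class is exactly $2$. I do not anticipate a genuine obstacle here, as the lemma is computational. The only point demanding care is the bookkeeping in the middle component of part (1), where it is easy to mismanage a sign or drop one of the mixed terms $a|_1 b|_3$ and $a|_3 b|_1$; organizing the calculation as $(ba)^{-1}(ab)$ minimizes the number of intermediate products and makes the cancellations transparent.
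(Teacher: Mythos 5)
Your proposal is correct and matches the intended argument: the paper omits the proof as ``straightforward,'' and your direct computation via the product and inversion formulas (with the $(ba)^{-1}(ab)$ organization for part (1), the explicit commutator $[(1,\,0,\,0),\,(0,\,0,\,1)] = (0,\,1,\,0)$ together with Lemma~\ref{known} for part (2), and $[\h(r),\,\h(r)] = Z(\h(r))$ plus $r > 1$ for part (3)) is exactly the routine verification the authors have in mind. All steps check out, including the cancellation in the middle component and the use of both test elements to pin down the center.
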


\begin{Corollary}\label{C}
Let $S \subseteq \h(r)$ and $h \in \h(r)$.
\begin{enumerate}
\item $C(S) = \{g \in \h(r) \, | \, [a, \, g]|_{2} = 0  \hbox{ mod } r\, \hbox{ for all } a \in S\};$
\item $C(h) = \{g \in \h(r) \, | \, [h, \, g]|_{2} = 0  \hbox{ mod } r \}.$
\end{enumerate}
\end{Corollary}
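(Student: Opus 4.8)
The plan is to reduce the centralizer condition to a single coordinate equation by invoking the explicit commutator formula of Lemma~\ref{conmutador}. First I would unwind the definition: an element $g \in \h(r)$ lies in $C(S)$ precisely when $g$ commutes with every $a \in S$, that is, when $[a, \, g]$ equals the identity $(0, \, 0, \, 0)$ of $\h(r)$ for all $a \in S$.

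The key step is the observation that, by Lemma~\ref{conmutador}(1), every commutator in $\h(r)$ has the special form $[a, \, g] = (0, \, a|_{1}g|_{3} - a|_{3}g|_{1}, \, 0)$, so its first and third components vanish automatically. Hence $[a, \, g] = (0, \, 0, \, 0)$ if and only if its middle component does, i.e. if and only if $[a, \, g]|_{2} = a|_{1}g|_{3} - a|_{3}g|_{1} = 0 \hbox{ mod } r$. Taking the conjunction of these conditions over all $a \in S$ yields the description of $C(S)$ in part (1), and specializing to the singleton $S = \{h\}$ gives part (2).

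Since the whole argument rests on the already established formula of Lemma~\ref{conmutador}(1), I do not anticipate any genuine obstacle here; the only point requiring care is the bookkeeping that the identity element is $(0, \, 0, \, 0)$ and that, because $\h(r)$ has nilpotency class $2$, every commutator is central, so that commuting with $a$ is governed entirely by the middle coordinate $[a, \, g]|_{2}$. This makes the corollary an essentially immediate translation of the commutator formula into centralizer language.
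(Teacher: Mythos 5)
Your proposal is correct and matches the paper's argument: the paper proves Corollary~\ref{C} by the one-line observation that it follows from Lemma~\ref{conmutador}, whose part (1) gives $[a, \, g] = (0, \, a|_{1}g|_{3} - a|_{3}g|_{1}, \, 0)$, so commuting is equivalent to the vanishing of the second component modulo $r$ --- exactly the reduction you carry out. Your write-up simply makes explicit the bookkeeping the paper leaves to the reader.
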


\begin{proof}
It follows from Lemma~\ref{conmutador}.
\end{proof}

\begin{Lemma}\label{producttosum}
Suppose that $x, \, y, \, z, \, w \in \h(r).$ The following hold:
\begin{enumerate}
\item $[[x, \, y], \, z] = (0, \, 0, \, 0)$;
\item $[x, \, yz] = [x, \, z][x, \, y]$ and $[xy, \, z] = [x, \, z][y, \, z]$;
\item $([x, \, y][z, \, w])|_{2} = [x, \, y]|_{2} + [z, \, w]|_{2}$;
\item $[x, \, y]|_{2} + \left[x, \, y^{-1}\right]|_{2} = 0$.
\end{enumerate}
\end{Lemma}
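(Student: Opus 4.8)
The plan is to reduce all four parts to the explicit commutator formula of Lemma~\ref{conmutador}(1), namely $[a, \, b] = (0, \, a|_{1}b|_{3} - a|_{3}b|_{1}, \, 0)$, together with the observation from Lemma~\ref{conmutador}(2) that every commutator is central and hence of the form $(0, \, *, \, 0)$. The single computational fact underlying everything is that two central elements multiply additively in the second component: if $u = (0, \, a, \, 0)$ and $v = (0, \, b, \, 0)$, then the multiplication rule $(a_{1}, \, a_{2}, \, a_{3})(b_{1}, \, b_{2}, \, b_{3}) = (a_{1} + b_{1}, \, a_{2} + b_{2} + a_{1}b_{3}, \, a_{3} + b_{3})$ gives $uv = (0, \, a + b, \, 0)$. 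I would record this first, since it is precisely statement (3): applying it to $u = [x, \, y]$ and $v = [z, \, w]$ yields $([x, \, y][z, \, w])|_{2} = [x, \, y]|_{2} + [z, \, w]|_{2}$.

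For (1), I would feed $[x, \, y] = (0, \, x|_{1}y|_{3} - x|_{3}y|_{1}, \, 0)$ back into the formula of Lemma~\ref{conmutador}(1): since its first and third components vanish, $[[x, \, y], \, z] = (0, \, 0 \cdot z|_{3} - 0 \cdot z|_{1}, \, 0) = (0, \, 0, \, 0)$. Equivalently, this is just a restatement of the class-$2$ nilpotency recorded in Lemma~\ref{conmutador}(3).

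For (2), the key is that the second-component pairing $(a, \, b) \mapsto a|_{1}b|_{3} - a|_{3}b|_{1}$ is bilinear, and that the first and third components are additive under multiplication, i.e. $(yz)|_{1} = y|_{1} + z|_{1}$ and $(yz)|_{3} = y|_{3} + z|_{3}$. Expanding $[x, \, yz]|_{2} = x|_{1}(yz)|_{3} - x|_{3}(yz)|_{1}$ and distributing gives $[x, \, y]|_{2} + [x, \, z]|_{2}$; comparing with $[x, \, z][x, \, y]$ via part (3) shows the two agree in the second component, while both sides are central and so have zero first and third components. The identity $[xy, \, z] = [x, \, z][y, \, z]$ is handled the same way, expanding in the left slot. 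One could instead derive these from the general class-$2$ identities $[x, \, yz] = [x, \, z][x, \, y]^{z}$ and $[xy, \, z] = [x, \, z]^{y}[y, \, z]$ and drop the conjugations using centrality, but the direct expansion is cleaner here.

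Finally, (4) is a special case of (2): taking the second factor to be $y^{-1}$ gives $[x, \, yy^{-1}] = [x, \, 1] = (0, \, 0, \, 0) = [x, \, y^{-1}][x, \, y]$, and reading off the second component with (3) yields $[x, \, y]|_{2} + [x, \, y^{-1}]|_{2} = 0$. Alternatively one can compute directly from the inverse formula $y^{-1} = (-y|_{1}, \, y|_{1}y|_{3} - y|_{2}, \, -y|_{3})$, noting $y^{-1}|_{1} = -y|_{1}$ and $y^{-1}|_{3} = -y|_{3}$, so that $[x, \, y^{-1}]|_{2} = -x|_{1}y|_{3} + x|_{3}y|_{1} = -[x, \, y]|_{2}$. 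There is no genuine obstacle in this lemma; the only point demanding care is fixing the multiplication convention and verifying the additivity of central elements in the second component, since that one fact drives parts (2)--(4).
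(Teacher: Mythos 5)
Your proposal is correct, and it is essentially the paper's argument written out in full: the paper disposes of this lemma in one line by citing the standard commutator identities together with Lemma~\ref{conmutador}, and your coordinate computations (the explicit formula $[a,\,b] = (0,\,a|_{1}b|_{3} - a|_{3}b|_{1},\,0)$, additivity of central elements in the second component, and the identity-based alternative you mention for part (2)) are exactly the details being delegated there. Nothing is missing; each of the four parts is verified correctly.
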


\begin{proof}
This is a consequence of the commutator identities (see Lemma 1.4 of \cite{CMZ}) and Lemma~\ref{conmutador}.
\end{proof}

We henceforth specialize to the case when $r = p^{n}$ for any fixed $n \in \mathbb{N}$ and prime $p.$ The next lemma provides a short exact sequence which will be referred to throughout the paper. The proof is left for the reader.

\begin{Lemma}\label{short}
There is a short exact sequence of groups
\begin{equation}\label{sh-H}
\xymatrix{1 \ar[r] & \mathbb{Z}_{p}^{3} \ar[r]^{f} & \h(p^{n}) \ar[r]^{q} & \h\left(p^{n - 1}\right) \ar[r] & 1}
\end{equation}
\noindent where $f\left(a_{1}, \, a_{2}, \, a_{3}\right) = \left(a_{1}p^{n - 1}, \, a_{2}p^{n - 1}, \, a_{3}p^{n - 1}\right)$ and

\ \ \ \ \ \ $q\left(b_{1}, \, b_{2}, \, b_{3}\right) = \left(b_{1} \hbox{ mod }p^{n - 1}, \, b_{2} \hbox{ mod }p^{n - 1}, \, b_{3} \hbox{ mod }p^{n - 1}\right).$
\end{Lemma}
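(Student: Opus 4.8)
The plan is to verify directly the five defining conditions of a short exact sequence: that $f$ and $q$ are group homomorphisms, that $f$ is injective, that $q$ is surjective, and that $\operatorname{im} f = \ker q$. Throughout I will use the explicit product formula $(a_{1}, a_{2}, a_{3})(b_{1}, b_{2}, b_{3}) = (a_{1} + b_{1},\, a_{2} + b_{2} + a_{1}b_{3},\, a_{3} + b_{3})$ with all arithmetic in the relevant ring. I note at the outset that the statement is intended for $n \geq 2$: when $n = 1$ the quotient $\h(p^{0})$ is trivial and the sequence would force $\mathbb{Z}_{p}^{3} \cong \h(p)$, which is false since the latter is nonabelian.

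First I would treat $q$. Reduction modulo $p^{n-1}$ is a surjective ring homomorphism $\mathbb{Z}_{p^{n}} \to \mathbb{Z}_{p^{n-1}}$ (well defined since $p^{n}\mathbb{Z} \subseteq p^{n-1}\mathbb{Z}$), and since the Heisenberg product is built only from ring addition and multiplication, applying this reduction entrywise yields a group homomorphism $\h(p^{n}) \to \h(p^{n-1})$; this is exactly $q$. Surjectivity is immediate because every entry of a target matrix lifts to $\mathbb{Z}_{p^{n}}$. Computing the kernel is then routine: $q(b_{1}, b_{2}, b_{3}) = (0,0,0)$ forces each $b_{i} \equiv 0 \pmod{p^{n-1}}$, so $\ker q = \{(c_{1}p^{n-1},\, c_{2}p^{n-1},\, c_{3}p^{n-1}) : c_{i} \in \{0, 1, \ldots, p-1\}\}$, a set of order $p^{3}$.

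The core of the argument is $f$. I would first check well-definedness: replacing $a_{i}$ by $a_{i} + kp$ changes $a_{i}p^{n-1}$ by $kp^{n} \equiv 0 \pmod{p^{n}}$, so $f$ depends only on the residues modulo $p$. For the homomorphism property, comparing $f(a)f(b)$ with $f(a+b)$, the first and third components agree on the nose, and the only discrepancy is the middle Heisenberg twist term $(a_{1}p^{n-1})(b_{3}p^{n-1}) = a_{1}b_{3}\,p^{2n-2}$; since $2n - 2 \geq n$ for $n \geq 2$, this term vanishes modulo $p^{n}$, and $f$ respects the additive group law of $\mathbb{Z}_{p}^{3}$. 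This vanishing is the single essential computation and the place where the hypothesis $n \geq 2$ is used; it is precisely what makes the image of $f$ abelian, matching the abelian source, and it is the one step I expect to deserve care. Injectivity follows because $f(a) = (0,0,0)$ gives $a_{i}p^{n-1} \equiv 0 \pmod{p^{n}}$, i.e. $a_{i} \equiv 0 \pmod{p}$, so $a = 0$ in $\mathbb{Z}_{p}^{3}$.

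Finally, exactness at the middle reduces to $\operatorname{im} f = \ker q$. By the explicit description of $f$, its image is $\{(c_{1}p^{n-1},\, c_{2}p^{n-1},\, c_{3}p^{n-1}) : c_{i} \in \{0, \ldots, p-1\}\}$, which is visibly equal to the kernel computed above; a count confirms both have order $p^{3}$. Assembling these verifications yields the short exact sequence. I anticipate no genuine obstacle beyond bookkeeping: the only delicate point is the homomorphism property of $f$, whose validity hinges entirely on the congruence $p^{2n-2} \equiv 0 \pmod{p^{n}}$.
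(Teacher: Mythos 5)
Your proof is correct, and since the paper leaves this lemma's proof to the reader, your direct verification of the five defining conditions (homomorphism properties, injectivity of $f$, surjectivity of $q$, and $\operatorname{im} f = \ker q$) is exactly the intended argument. Your two added observations are the right ones to single out: the vanishing of the twist term $a_{1}b_{3}p^{2n-2} \bmod p^{n}$ is the only non-routine step, and your remark that the lemma implicitly requires $n \geq 2$ (since for $n = 1$ it would force $\mathbb{Z}_{p}^{3} \cong \h(p)$, which is nonabelian) is a genuine point the paper glosses over.
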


\vspace{.1in}

\noindent \textbf{Unless otherwise told, the map $q$ in $P(S) = P(S; \, q)$ will be from the short exact sequence (\ref{sh-H}).}

\begin{Corollary}\label{P}
Let $S \subseteq \h(p^{n})$ and $h \in \h(p^{n}).$
\begin{enumerate}
\item $P(S) = \{g \in \h(p^{n}) \, | \, [a, \, g] |_{2} = kp^{n - 1} \hbox{ for some } 0 \le k < p \hbox{ and all } a \in S\}$.

\vspace{.05in}

\item $P(h) = \{g \in \h(p^{n}) \, | \, [h, \, g] |_{2} = kp^{n - 1} \hbox{ for some } 0 \le k < p\}$.
\end{enumerate}
\end{Corollary}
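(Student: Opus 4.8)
The plan is to unwind the definition of $P(S)$ using an explicit description of $\ker q$ coming from the short exact sequence (\ref{sh-H}), combined with the commutator formula in Lemma~\ref{conmutador}. The whole statement is a direct translation once $\ker q$ is made concrete, so the argument is short.

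First I would identify $\ker q$ explicitly. Since $\ker q = \operatorname{im} f$ and $f(a_{1}, \, a_{2}, \, a_{3}) = (a_{1}p^{n-1}, \, a_{2}p^{n-1}, \, a_{3}p^{n-1})$, an element $(c_{1}, \, c_{2}, \, c_{3}) \in \hb{n}$ lies in $\ker q$ precisely when each component $c_{i}$ is a multiple of $p^{n-1}$ in $\Z_{p^{n}}$; equivalently, $c_{i} \in \{0, \, p^{n-1}, \, 2p^{n-1}, \, \ldots, \, (p-1)p^{n-1}\}$ for each $i$, this being the cyclic subgroup $\langle p^{n-1}\rangle$ of order $p$.

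Next I would invoke Lemma~\ref{conmutador}(1), which gives $[a, \, g] = (0, \, a|_{1}g|_{3} - a|_{3}g|_{1}, \, 0)$. The key observation is that the first and third components of any commutator already vanish, hence are automatically multiples of $p^{n-1}$ (namely $0 = 0 \cdot p^{n-1}$). Therefore the membership $[a, \, g] \in \ker q$ collapses to a single condition on the second component: that $[a, \, g]|_{2}$ be a multiple of $p^{n-1}$, i.e. $[a, \, g]|_{2} = kp^{n-1}$ for some $0 \le k < p$. Imposing this for the single element $h$ yields part (2), and imposing it for every $a \in S$ yields part (1); the latter is also consistent with $P(S) = \bigcap_{s \in S} P(s)$ from Lemma~\ref{P-properties}(5).

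There is essentially no obstacle here, as the content is a coordinate-by-coordinate reading of the definition. The only point requiring mild care is confirming that the vanishing first and third components of $[a, \, g]$ impose no extra constraint, so that the entire $\ker q$-membership condition is carried by the second coordinate alone; this is exactly the remark that $0$ is counted among the admissible multiples $kp^{n-1}$.
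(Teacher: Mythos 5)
Your proof is correct and follows essentially the same route as the paper: both identify $\ker q$ as the elements of $\hb{n}$ whose components are all divisible by $p^{n-1}$ and then apply Lemma~\ref{conmutador} to reduce membership to the single condition on the second component. Your added remark that the vanishing first and third components of $[a,\,g]$ impose no extra constraint simply makes explicit a step the paper leaves to the reader.
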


\begin{proof}
Since $\ker q = \{g \in \h(p^{n}) \, | \, p^{n - 1}\;\text{divides}\; g|_{i} \hbox{ for each } i = 1, \, 2, \, 3\},$ the result follows from Lemma \ref{conmutador}.
\end{proof}

\section{relationships between $P(H)$ and $C(H)$}

Let $H \leq \h(p^{n})$ and $h \in \h(p^{n}).$ In this section, we will prove a series of statements concerning the precise relation between $P(H)$ and $C(H),$ as well as between $P(h)$ and $C(h).$ Most of these relationships are in terms of the center of $\h(p^{n}).$ These, in turn, will allow us to determine how their orders are related.

\begin{Lemma}\label{CnormalinP}
For any $S\subseteq \h(p^{n})$ in the short exact sequence (\ref{sh-H}), we have $C(S) \unlhd P(S)$.
\end{Lemma}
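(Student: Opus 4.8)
The plan is to reduce the claim entirely to normality. The containment $C(S) \le P(S)$ is already recorded as part $(1)$ of Lemma~\ref{P-properties}, and both $C(S)$ and $P(S)$ are subgroups of $G = \hb{n}$ by the earlier lemma, so the only thing left to verify is that $C(S)$ is stable under conjugation by elements of $P(S)$. In fact I expect to prove the slightly stronger statement that $C(S) \unlhd \hb{n}$; since $P(S) \le \hb{n}$, normality in $P(S)$ is then immediate, and this also explains structurally why the result holds.

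The key idea is to exploit the fact that $\hb{n}$ is nilpotent of class $2$ (Lemma~\ref{conmutador}(3)). Fix $c \in C(S)$ and $z \in P(S)$; I want to show $c^{z} \in C(S)$. Writing $c^{z} = z^{-1}cz = c[c, \, z]$ and noting that $[c, \, z] \in Z(\hb{n}) = [\hb{n}, \, \hb{n}]$ by Lemma~\ref{conmutador}(2), conjugation by $z$ alters $c$ only by a central factor. Now for each $a \in S$ I compute $[a, \, c^{z}]$ by applying the commutator identity $[x, \, yw] = [x, \, w][x, \, y]$ from Lemma~\ref{producttosum}(2) with $w = [c, \, z]$, obtaining $[a, \, c^{z}] = [a, \, [c, \, z]]\,[a, \, c]$. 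The first factor is trivial because $[c, \, z]$ is central: it equals $[[c, \, z], \, a]^{-1}$, which vanishes by Lemma~\ref{producttosum}(1). The second factor is trivial because $c \in C(S)$ forces $[a, \, c] = (0, \, 0, \, 0)$ for every $a \in S$ (Corollary~\ref{C}). Hence $[a, \, c^{z}] = (0, \, 0, \, 0)$ for all $a \in S$, that is, $c^{z} \in C(S)$, as desired.

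There is no genuine obstacle beyond bookkeeping with the commutator identities; the one point requiring care is the order of factors in $[x, \, yw] = [x, \, w][x, \, y]$, and the decisive structural input is the class-$2$ hypothesis, which forces $[c, \, z]$ to be central and hence to disappear upon taking a further commutator with $a$. I note in passing that the argument never uses $z \in P(S)$ rather than an arbitrary element of $\hb{n}$, which is precisely why normality in the whole group comes for free.
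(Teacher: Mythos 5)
Your proof is correct and is essentially the paper's own argument: the paper likewise fixes $c \in C(S)$ and $z \in P(S)$ and uses the identities of Lemma~\ref{producttosum} (class-$2$ commutator calculus) to show $\left[k, \, z^{-1}cz\right]|_{2} = 0$, concluding via Corollary~\ref{C}; your computation with $c^{z} = c[c, \, z]$ just makes that one-line invocation explicit. Even your closing observation that the argument never uses $z \in P(S)$, so that $C(S)$ is in fact normal in all of $\hb{n}$, is stated verbatim in the paper's remark following the lemma.
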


Compare this to Lemma~\ref{P-properties} (1). Note that our proof actually shows that $C(H)$ and $C(h)$ are normal in $\h(p^{n})$.

\begin{proof}
Let $k \in H, \, c \in C(S),$ and $z \in P(S).$ By Lemma~\ref{producttosum}, $\left[k, \, z^{-1}cz\right]|_{2} = 0.$ Thus, $z^{-1}cz \in C(S)$ by Corollary~\ref{C}. And so, $C(S) \unlhd P(S).$ 
\end{proof}

\begin{Lemma}\label{PnoC2}
Let $H \leq \h(p^{n}).$ Then $H \leq Z(\h(p^{n}))$ if and only if $P(H) = C(H).$
\end{Lemma}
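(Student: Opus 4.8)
The plan is to prove the two implications separately, with the forward direction being essentially immediate and the reverse requiring one explicit construction. Throughout I would work from the componentwise descriptions of the centralizer and pseudocentralizer: by Corollary~\ref{C} and Corollary~\ref{P}, an element $g \in \h(p^{n})$ lies in $C(H)$ exactly when $[a,\,g]|_{2} = a|_{1}g|_{3} - a|_{3}g|_{1} \equiv 0 \pmod{p^{n}}$ for every $a \in H$, and lies in $P(H)$ exactly when the same quantity is $\equiv 0 \pmod{p^{n-1}}$ for every $a \in H$. Since $C(H) \le P(H)$ always (Lemma~\ref{P-properties}(1)), to separate the two groups it suffices to exhibit a single element of $P(H) \setminus C(H)$.

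For the forward direction, suppose $H \le Z(\h(p^{n}))$. Then each $a \in H$ is central, so $[a,\,g] = (0,\,0,\,0)$ for every $g$; in particular $[a,\,g]|_{2} = 0$, and hence $C(H) = \h(p^{n})$ by Corollary~\ref{C}. As $C(H) \le P(H) \le \h(p^{n})$, this sandwich forces $P(H) = C(H) = \h(p^{n})$.

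For the reverse direction I would argue by contrapositive, assuming $H \not\le Z(\h(p^{n}))$ and producing $g \in P(H) \setminus C(H)$. By Lemma~\ref{conmutador}(2) the center is $\{(0,\,x,\,0)\}$, so $H \not\le Z(\h(p^{n}))$ means some element of $H$ has a nonzero first or third component. Let $e$ be minimal such that some $h \in H$ has $h|_{1}$ or $h|_{3}$ equal to $p^{e}m$ with $\gcd(m,\,p) = 1$; then $e$ is well defined with $0 \le e \le n-1$, and by minimality every first or third component $a|_{1},\,a|_{3}$ of every $a \in H$ is divisible by $p^{e}$. I may assume the minimum is attained by a first component, $h|_{1} = p^{e}m$ (the third-component case being symmetric, via $g = (p^{n-1-e},\,0,\,0)$), and I set $g = (0,\,0,\,p^{n-1-e})$. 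To see $g \in P(H)$, note that for every $a \in H$ we have $[a,\,g]|_{2} = a|_{1}\,p^{n-1-e}$, and since $p^{e} \mid a|_{1}$ this is divisible by $p^{n-1}$, i.e. $[a,\,g]|_{2} \equiv 0 \pmod{p^{n-1}}$. To see $g \notin C(H)$, test the single element $h$: here $[h,\,g]|_{2} = h|_{1}\,p^{n-1-e} = m\,p^{n-1}$, which is nonzero in $\Z_{p^{n}}$ because $m$ is a unit, so $[h,\,g]|_{2} \not\equiv 0 \pmod{p^{n}}$. Thus $C(H) \subsetneq P(H)$, completing the contrapositive.

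The one genuinely delicate point is the universal quantifier over $H$ concealed in the membership condition for $P(H)$. The tempting shortcut of taking $g$ inside $\ker q$ (automatically in $P(H)$ by Lemma~\ref{P-properties}(3)) can fail to escape $C(H)$ precisely when every first and third component of $H$ is divisible by $p$. Calibrating the exponent in $g$ to the minimal value $e$ is exactly what reconciles the two demands simultaneously: it makes $p^{n-1}$ divide $a|_{1}\,p^{n-1-e}$ for \emph{all} $a \in H$ at once, while keeping $h|_{1}\,p^{n-1-e}$ a nonzero multiple of $p^{n-1}$. This calibration is the step I would be most careful to state correctly.
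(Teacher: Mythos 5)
Your proof is correct, but it takes a genuinely different route from the paper. The paper handles the forward direction exactly as you do (centrality gives $C(H) = \hb{n}$, and the sandwich $C(H) \le P(H)$ closes it), but it \emph{defers} the converse: the proof is only completed after Definition~\ref{d:InjectiveSetForSubgroup}, where it is deduced from Theorem~\ref{main} --- if $H \not\le Z(\hb{n})$, then any injective set $I$ for $H$ is nonempty (Lemma~\ref{emptyI}), so $|P(H)| = p^{|I|}|C(H)| > |C(H)|$. That argument rides on the full apparatus of special generating sets, injective sets, and Theorems~\ref{main1} and~\ref{main}. You instead promote the paper's own element-level argument for Lemma~\ref{PnoC} to the subgroup level: your exponent $e$ is, in the language of Definition~\ref{d:EtaNu}, the minimum of $\nu(a)$ over $a \in H$, and your test element $g = \left(0, \, 0, \, p^{n-1-e}\right)$ is the direct analogue of the element $\left(0, \, 0, \, p^{n-k-1}\right)$ used there for a single non-central $h$. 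The delicate step you flag is indeed the crux and you handle it correctly: minimality of $e$ makes $p^{n-1}$ divide $a|_{1}\,p^{n-1-e}$ for \emph{every} $a \in H$ at once (zero components being trivially divisible), so $g \in P(H)$ by Corollary~\ref{P}, while the extremal element gives $[h, \, g]|_{2} = mp^{n-1} \neq 0$ in $\Z_{p^{n}}$ since $p \nmid m$, so $g \notin C(H)$ by Corollary~\ref{C}; the third-component case is genuinely symmetric as you say. As for what each approach buys: yours is self-contained, needs only Lemma~\ref{conmutador} and Corollaries~\ref{C} and~\ref{P}, could be placed immediately after them, and exhibits an explicit element of $P(H) \setminus C(H)$; the paper's detour costs nothing extra since Theorem~\ref{main} is needed anyway and delivers the sharper quantitative statement $|P(H)| = p^{|I|}|C(H)|$ with $|I| \in \{0, 1, 2\}$, of which the lemma is an immediate corollary.
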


\begin{proof}
If $H \leq Z(\h(p^{n})),$ then $C(H) = \h(p^n).$ It follows from Lemma~\ref{P-properties} (1) that $P(H) = C(H).$ The proof of the converse will be given after Definition~\ref{d:InjectiveSetForSubgroup}.
\end{proof}

\begin{Lemma}\label{PnoC}
For all $h \in \h(p^{n}),$ $P(h) = C(h)$ if and only if $h \in Z(\h(p^{n})).$
\end{Lemma}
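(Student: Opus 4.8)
The plan is to prove the biconditional in Lemma~\ref{PnoC} by handling the two implications separately, with the easy direction coming directly from an earlier result. For the backward direction, suppose $h \in Z(\h(p^{n}))$. Then $C(h) = \h(p^{n})$, and since $P(h) \le \h(p^{n})$ trivially while Lemma~\ref{P-properties}~(1) gives $C(h) \le P(h)$, we immediately conclude $P(h) = C(h) = \h(p^{n})$. This is the same argument used in the forward direction of Lemma~\ref{PnoC2}, just specialized to a single element.

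For the forward direction, I would prove the contrapositive: if $h \notin Z(\h(p^{n}))$, then $P(h) \ne C(h)$. The strategy is to exhibit an explicit element $g \in P(h) \setminus C(h)$. By Corollary~\ref{P}~(2), membership in $P(h)$ requires $[h,g]|_{2} = kp^{n-1}$ for some $0 \le k < p$, while by Corollary~\ref{C}~(2), membership in $C(h)$ requires $[h,g]|_{2} = 0 \bmod p^{n}$. So I want to find $g$ making $[h,g]|_{2}$ a nonzero multiple of $p^{n-1}$. Using Lemma~\ref{conmutador}~(1), we have $[h,g]|_{2} = h|_{1}\,g|_{3} - h|_{3}\,g|_{1} \bmod p^{n}$. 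Since $h$ is non-central, Lemma~\ref{conmutador}~(2) tells us that $h$ is not of the form $(0,x,0)$, so at least one of $h|_{1}, h|_{3}$ is nonzero in $\Z_{p^{n}}$.

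The key step is choosing $g$ appropriately based on which component is nonzero. Suppose $h|_{1} \ne 0$ (the case $h|_{3} \ne 0$ is symmetric, taking $g$ in the first component instead). Write $h|_{1} = p^{t}u$ with $u$ a unit and $0 \le t < n$; then setting $g = (0,0,p^{n-1-t})$ gives $[h,g]|_{2} = p^{t}u \cdot p^{n-1-t} = u p^{n-1} \bmod p^{n}$, which equals $kp^{n-1}$ for $k = u \bmod p$, a nonzero residue since $u$ is a unit. Thus $g \in P(h)$ by Corollary~\ref{P}~(2) but $g \notin C(h)$ by Corollary~\ref{C}~(2), establishing $P(h) \ne C(h)$.

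The main obstacle is purely bookkeeping: correctly computing the $p$-adic valuation of the nonzero component and verifying that the chosen $g$ lands in $P(h)$ but not $C(h)$. There is no conceptual difficulty, since all the commutator machinery is already available from Lemma~\ref{conmutador} and the explicit descriptions in Corollaries~\ref{C} and~\ref{P}. One should take slight care that the valuation $t$ satisfies $0 \le t < n$ so that $p^{n-1-t}$ is a well-defined nonzero element of $\Z_{p^{n}}$; this holds precisely because the relevant component is a nonzero element of $\Z_{p^{n}}$, which is exactly what non-centrality guarantees.
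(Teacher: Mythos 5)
Your proposal is correct and follows essentially the same route as the paper's own proof: the central case is dispatched via Lemma~\ref{P-properties}~(1) exactly as in the paper, and for the non-central case you construct the identical witness $g = \left(0, \, 0, \, p^{n - 1 - t}\right)$ (the paper's $g = \left(0, \, 0, \, p^{n - k - 1}\right)$ with $h|_{1} = rp^{k}$), verified by the same commutator computation from Lemma~\ref{conmutador}. Your added observation that $up^{n-1} \bmod p^{n}$ is a \emph{nonzero} multiple of $p^{n-1}$ because $u$ is a unit is a slightly more explicit justification of $g \notin C(h)$ than the paper's ``evidently,'' but the argument is the same.
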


\begin{proof}
If $h \in Z(\h(p^{n})),$ then $C(h) = \h(p^{n}).$ By Lemma~\ref{P-properties} (1), $C(h) \leq P(h)$ and, thus, $P(h) = C(h).$

Suppose now that $h \notin Z(\h(p^{n})).$ Then either $h|_{1} \neq 0$ or $h|_{3} \neq 0.$ If $h|_{1}\ne 0,$ then there exist $0 \le k < n$ and $0 < r < p^{n}$ such that $p\nmid r$ and $h|_{1} = rp^{k}$. Let $g = \left(0, \, 0, \, p^{n - k - 1}\right) \in \h(p^{n}).$ We claim that $g \in P(h) \setminus C(h).$ Well, by Lemma~\ref{conmutador}, $[h, \, g] = \left(0, \, rp^{n - 1}, \, 0\right).$ Since $p^{n - 1}r \hbox{ mod } p^{n - 1} = 0,$ we have that $[h, \, g] \in \ker q.$ And so, $g \in P(h).$ Evidently, $g \notin C(h)$ by Corollary~\ref{C}. This proves the claim. The case $h|_{3} \ne 0$ is proven in a similar way.
\end{proof}

\vspace{.05in}

Non-central elements play an important role in what follows. We introduce some notation.

\begin{Notation}
If $S$ is a non-empty subset of $\hb{n},$ then we put
\[
S^{\dagger} = S - Z(\hb{n}).
\]
In particular, $\hb{n}^{\dagger}$ is the set of non-central elements of $\hb{n}.$
\end{Notation}

\begin{Lemma}\label{l:P(H)=P(SN)}
If $H \leq \hb{n}$ and $H = \left<S \right>$ for some non-empty set $S$ of $\hb{n},$ then $P(H) = P\left(S^{\dagger}\right)$ and $C(H) = C\left(S^{\dagger}\right)$.
\end{Lemma}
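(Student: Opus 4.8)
The plan is to reduce both equalities to the elementary observation that a central element has full pseudocentralizer and full centralizer, so that deleting central elements from a generating set changes neither $P$ nor $C$. In effect, the whole content of the lemma is that central elements are invisible to both operators.

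First, for $P(H) = P(S^{\dagger})$: since $S$ generates $H$, Lemma~\ref{P-properties}(9) gives $P(H) = P(S)$, so it suffices to prove $P(S) = P(S^{\dagger})$. Using Lemma~\ref{P-properties}(5) I would write $P(S) = \bigcap_{s \in S} P(s)$ and $P(S^{\dagger}) = \bigcap_{s \in S^{\dagger}} P(s)$. The key step is that if $z \in Z(\hb{n})$, then by Lemma~\ref{conmutador} we have $z|_{1} = z|_{3} = 0$, so $[z, \, g] = (0, \, 0, \, 0) \in \ker q$ for every $g \in \hb{n}$, whence $P(z) = \hb{n}$. Therefore every factor $P(s)$ with $s \in S \cap Z(\hb{n})$ equals the whole group and contributes nothing to the intersection, giving $\bigcap_{s \in S} P(s) = \bigcap_{s \in S^{\dagger}} P(s)$, i.e. $P(S) = P(S^{\dagger})$ and hence $P(H) = P(S^{\dagger})$.

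For $C(H) = C(S^{\dagger})$ I would argue in exactly the same way, replacing Lemma~\ref{P-properties} by Corollary~\ref{C}. Corollary~\ref{C}(1) expresses $C(S)$ as $\{g \in \hb{n} \, | \, [a, \, g]|_{2} = 0 \hbox{ mod } p^{n} \hbox{ for all } a \in S\}$, which is precisely $C(S) = \bigcap_{s \in S} C(s)$, and for central $s$ the condition $[s, \, g]|_{2} = 0$ holds for every $g$, so $C(s) = \hb{n}$. Deleting the central generators thus leaves the intersection unchanged, giving $C(S) = C(S^{\dagger})$; combined with the standard fact that the centralizer of $\left< S \right>$ equals the centralizer of $S$ (also immediate from Corollary~\ref{C}(1) together with the bilinearity in Lemma~\ref{producttosum}(2)), this yields $C(H) = C(S^{\dagger})$.

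The only real subtlety, and the one place I would pause, is the degenerate case $S^{\dagger} = \emptyset$, which occurs precisely when $S \subseteq Z(\hb{n})$ and hence $H \le Z(\hb{n})$. Since $P$ and $C$ are defined only on non-empty sets, this case must either be excluded by hypothesis or handled by the convention $P(\emptyset) = C(\emptyset) = \hb{n}$; with that convention both sides reduce to $\hb{n}$ (using $C(H) = \hb{n}$ for central $H$, cf.\ Lemma~\ref{PnoC2}), so the identities persist. Apart from this bookkeeping, the argument is routine.
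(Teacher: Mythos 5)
Your proof is correct and takes essentially the same route as the paper: reduce via Lemma~\ref{P-properties}(9) to $P(H)=P(S)$, split the intersection $\bigcap_{s \in S} P(s)$ into central and non-central factors, and discard the central ones since $P(z) = C(z) = \hb{n}$ for $z \in Z(\hb{n})$, with the analogous argument for centralizers. Your remark about the degenerate case $S^{\dagger} = \emptyset$ identifies a real gap in the statement's bookkeeping that the paper's proof silently glosses over, and the convention $P(\emptyset) = C(\emptyset) = \hb{n}$ resolves it correctly.
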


\begin{proof}
By (9) of Lemma \ref{P-properties} we have $P(H) = P(S)$. Hence,
\[
P(S) = \bigcap_{h \in S}P(h) = \left(\bigcap_{h \in S^{\dagger}}P(h)\right) \bigcap \left(\bigcap_{h \in S - S^{\dagger}}P(h)\right) = P\left(S^{\dagger}\right) \cap \hb{n} = P\left(S^{\dagger}\right).
\]
The proof is similar for $C(H)$.
\end{proof}

Next we determine the relationship between $|P(h)|$ and $|C(h)|$ for any $h \in \hb{n}^{\dagger}.$ In order to do so, we partition $P(h)$ into a certain collection of subsets. The following notation will be used:

\begin{Notation}
For any $h \in \hb{n}^{\dagger}$ and each $0 \leq \ell < p,$ let
\[
P_{\ell}(h) = \left\{g \in  \h(p^{n}) \, \vert \, [h, \, g]|_{2} = \ell p^{n - 1}\right\}  \subset P(h).
\]
Observe that $P_{0}(h) = C(h)$ by Corollary~\ref{C}.
\end{Notation}

\begin{Theorem}\label{lo2}
If $h \in \hb{n}^{\dagger},$ then $|P(h)| = p|C(h)|$. Hence, $P(h)/C(h) \cong \mathbb{Z}_{p}.$
\end{Theorem}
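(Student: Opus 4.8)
The plan is to show that the subsets $P_\ell(h)$ for $0 \le \ell < p$ partition $P(h)$ into $p$ pieces, each of the same cardinality as $C(h) = P_0(h)$, so that $|P(h)| = p|C(h)|$. First I would observe that by Corollary~\ref{P}, every $g \in P(h)$ satisfies $[h,\,g]|_2 = kp^{n-1}$ for some $0 \le k < p$, and since the value of $[h,\,g]|_2$ uniquely determines which $P_\ell(h)$ the element $g$ lands in, the sets $P_\ell(h)$ are pairwise disjoint and their union is exactly $P(h)$. Thus $|P(h)| = \sum_{\ell=0}^{p-1} |P_\ell(h)|$, and it remains only to show each $P_\ell(h)$ is in bijection with $C(h) = P_0(h)$.

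To produce these bijections I would exhibit, for each $\ell$, a single element $g_\ell \in P_\ell(h)$ and then show that left-translation (or right-translation) by $g_\ell$ carries $P_0(h) = C(h)$ bijectively onto $P_\ell(h)$. The natural candidate for $g_\ell$ mimics the witness constructed in the proof of Lemma~\ref{PnoC}: since $h$ is non-central, either $h|_1 \ne 0$ or $h|_3 \ne 0$, and in either case one can write $h|_i = r p^k$ with $p \nmid r$ and produce an element whose commutator with $h$ realizes the prescribed value $\ell p^{n-1}$ in the second component. Concretely, taking a suitable power or scalar multiple of the element $(0,\,0,\,p^{n-k-1})$ (or its analogue in the first slot when $h|_3 \ne 0$) yields $g_\ell$ with $[h,\,g_\ell]|_2 = \ell p^{n-1}$. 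The key computation is that for $c \in C(h)$,
\[
[h,\, g_\ell c]|_2 = [h,\, g_\ell]|_2 + [h,\, c]|_2 = \ell p^{n-1} + 0 = \ell p^{n-1},
\]
where the additivity of the second component under products of commutators is exactly Lemma~\ref{producttosum}(3) together with part (2); this shows $g_\ell c \in P_\ell(h)$, so the map $c \mapsto g_\ell c$ sends $C(h)$ into $P_\ell(h)$.

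For surjectivity and injectivity of $c \mapsto g_\ell c$, injectivity is immediate from cancellation in the group, and surjectivity follows by reversing the computation: if $g \in P_\ell(h)$, then $[h,\, g_\ell^{-1} g]|_2 = -[h,\,g_\ell]|_2 + [h,\,g]|_2 = -\ell p^{n-1} + \ell p^{n-1} = 0$ (using Lemma~\ref{producttosum}(4) for the sign), so $g_\ell^{-1} g \in C(h)$ and $g = g_\ell (g_\ell^{-1} g)$ lies in the image. Hence $|P_\ell(h)| = |C(h)|$ for every $\ell$, giving $|P(h)| = p|C(h)|$. Finally, since $C(h) = P_0(h) \unlhd P(h)$ by Lemma~\ref{CnormalinP} and the quotient has order $p$, it is cyclic, so $P(h)/C(h) \cong \mathbb{Z}_p$.

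The main obstacle I anticipate is verifying that an element $g_\ell$ realizing \emph{every} residue $\ell$ (not merely $\ell = 1$, which is essentially what Lemma~\ref{PnoC} provides) actually exists inside $\h(p^n)$; this requires checking that the map $g \mapsto [h,\,g]|_2$ surjects onto $\{0,\, p^{n-1},\, 2p^{n-1},\, \ldots,\, (p-1)p^{n-1}\}$. Because $[h,\,g]|_2 = h|_1 g|_3 - h|_3 g|_1$ is $\mathbb{Z}_{p^n}$-linear in $(g|_1,\, g|_3)$ and $h$ is non-central so that $\gcd(h|_1,\, h|_3,\, p^n)$ is a proper power of $p$, the image of this linear form is a subgroup of $\mathbb{Z}_{p^n}$ that indeed contains the full set of multiples of $p^{n-1}$; one simply chooses $g_\ell$ to be $\ell$ times a fixed generator $g_1$. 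Making this divisibility bookkeeping precise, and handling the two cases $h|_1 \ne 0$ and $h|_3 \ne 0$ symmetrically, is the only genuinely delicate part; the bijection argument itself is then routine.
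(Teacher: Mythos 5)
Your proposal is correct and follows essentially the same route as the paper: both partition $P(h)$ into the level sets $P_{\ell}(h)$, $0 \le \ell < p$, and exhibit a bijection between $C(h) = P_{0}(h)$ and each $P_{\ell}(h)$ using a shift in the third (or first) coordinate by an element of the form $r^{-1}\ell p^{n-k-1}$, exactly as in the paper's maps $f_{\ell}$ and $g_{\ell}$. The only cosmetic difference is that you realize the bijection as left translation by a fixed witness $g_{\ell} \in P_{\ell}(h)$ (so the $P_{\ell}(h)$ become cosets of $C(h)$), whereas the paper reassigns the coordinate directly without multiplying; your existence check for $g_{\ell}$ and the additivity argument via Lemma~\ref{producttosum} are sound and match the paper's computations.
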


\begin{proof}
Since $h$ is not central, then either $h|_{1} \ne 0$ or $h|_{3} \ne 0.$ Assume first that $h|_{1} \ne 0.$ There exist $0 \le k < n$ and $0 < r < p^{n}$ such that $p \nmid r$ and $h|_{1} = rp^{k}.$ Note that $r$ is a unit in the ring $\Z_{p^{n}}.$

We concoct set maps between $C(h)$ and $P_{\ell}(h)$ for each $\ell.$ First observe that if $x \in C(h),$ then for each $\ell,$ Lemma~\ref{conmutador} and Corollary~\ref{C} give
\[
\left[h, \, \left(x|_{1}, \, x|_{2}, \, x|_{3} + r^{-1}\ell p^{n - k - 1}\right)\right]|_{2} = \ell p^{n - 1}.
\]
Thus, $\left(x|_{1}, \, x|_{2}, \, x|_{3} + r^{-1}\ell p^{n - k - 1}\right) \in P_{\ell}(h).$ Hence, there exists a well-defined set map $f_{\ell} : C(h) \rightarrow P_{\ell}(h)$ given by
\[
f_{\ell}(x) = \left(x|_{1}, \, x|_{2}, \, x|_{3} + r^{-1} \ell p^{n - k - 1} \right).
\]

Next observe that if $y \in P_{\ell}(h)$ for some $\ell,$ then $[h, \, y]|_{2} = \ell p^{n - 1}.$ Thus, using Lemma~\ref{conmutador}, we obtain
\[
\left[h, \, \left(y|_{1}, \, y|_{2}, \, y|_{3} - r^{-1}\ell p^{n - k - 1}\right) \right]|_{2} = 0.
\]
By Corollary~\ref{C}, $\left(y|_{1}, \, y|_{2}, \, y|_{3} - r^{-1}\ell p^{n - k - 1}\right) \in C(h).$ And so, there exists a well-defined set map $g_{\ell} : P_{\ell}(h) \rightarrow C(h)$ given by
\[
g_{\ell}(y) = \left(y|_{1}, \, y|_{2}, \, y|_{3} - r^{-1} \ell p^{n - k - 1}\right).
\]
Clearly, $f_{\ell}$ and $g_{\ell}$ are mutual inverses for each $0 \leq \ell < p.$

Similarly, if $h|_{3} \ne 0,$ then we may write $h|_{3} = rp^{k},$ where $0 \le k < n,$ $p \nmid r,$ and $0 < r < p^{n}$ (thus, $r$ is a unit in $\Z_{p^{n}}$).
In this case, we obtain well-defined set maps that are mutual inverses:
\[
\ \ \ \ \ \ \ \ f_{\ell} : C(h) \rightarrow P_{\ell}(h) \hbox{ \, given by \, }
x \mapsto \left(x|_{1} - r^{-1} \ell p^{n - k - 1}, \, x|_{2}, \, x|_{3}\right), \hbox{ \ \ and }
\]
\[
g_{\ell} : P_{\ell}(h) \rightarrow C(h) \hbox{ \, given by \, }
y \mapsto \left(y|_{1} + r^{-1} \ell p^{n - k - 1}, \, y|_{2}, \, y|_{3}\right).
\]
In either case, we have that $|P_{\ell}(h)| = |C(h)|$ for each $0 \leq \ell < p.$ Since $P(h) = \bigcup_{\ell = 0}^{p - 1} P_{\ell}(h)$ and the $P_{\ell}(h)$ are pairwise disjoint, we have
\[
|P(h)| = \sum_{\ell = 0}^{p - 1}|P_{\ell}(h)| = p|C(h)|.
\]
This completes the proof.
\end{proof}

\begin{Notation}\label{n:Factored_h}
From this point on, whenever we write $h \in \h(p^{n})$ in the form
\[
h = \Bigl(r_{1}p^{k_{1}}, \, r_{2}p^{k_{2}}, \, r_{3}p^{k_{3}}\Bigr),
\]
it is understood that either:

\vspace{.025in}

\noindent (1) \ $r_{i} = 0$ or

\vspace{.025in}

\noindent (2) \ $0 \leq k_{i} < n,$ $0 < r_{i} < p^{n},$ and $p \nmid r_{i}$

\vspace{.025in}

\noindent for $i = 1, \, 2, \, 3.$ Thus, $k_{i}$ is the largest power of $p$ that divides $h|_{i}$ whenever $r_{i} \neq 0.$
\end{Notation}

\vspace{.1in}

The functions $f_{\ell}$ in the proof of Theorem~\ref{lo2} allow us to express elements of $P_{\ell}(h)$ in terms of elements of $C(h).$ This important fact is the highlight of the next result which is needed later.

\begin{Lemma}\label{pForm}
Let $h = \Bigl(r_{1}p^{k_{1}}, \, r_{2}p^{k_{2}}, \, r_{3}p^{k_{3}}\Bigr) \in \h(p^{n}).$ If $x \in P_{\ell}(h)$ for some $0 \leq \ell < p,$ then there exists $(a_{1}, \, a_{2}, \, a_{3}) \in C(h)$ such that
\begin{eqnarray*}
x & = & \Bigl(a_{1}, \, a_{2}, \, a_{3} + r_{1}^{-1} \ell p^{n - k_{1} - 1}\Bigr)\qquad \text{if} \qquad r_{1} \ne 0 \hbox{ \ \ \ and}\\
x & = & \Bigl(a_{1} - r_{3}^{-1} \ell p^{n - k_{3} - 1}, \, a_{2}, \, a_{3}\Bigr)\qquad \text{if} \qquad r_{3} \ne 0.
\end{eqnarray*}
\end{Lemma}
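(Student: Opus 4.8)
The plan is to read the claim straight off the mutually inverse set maps $f_{\ell}$ and $g_{\ell}$ constructed in the proof of Theorem~\ref{lo2}. Those maps were built separately in the two cases $h|_{1} \ne 0$ and $h|_{3} \ne 0$, with the generic factorization $h|_{1} = rp^{k}$ (respectively $h|_{3} = rp^{k}$) of that proof now playing the role of $r_{1}p^{k_{1}}$ (respectively $r_{3}p^{k_{3}}$). So the first thing I would do is fix an $x \in P_{\ell}(h)$ and specialize the notation of Theorem~\ref{lo2} to the present component-indexed notation. The entire lemma is then just the observation that applying $g_{\ell}$ and renaming the output produces an element of $C(h)$ from which $x$ is recovered by $f_{\ell}$.

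Consider first the case $r_{1} \ne 0$, so $h|_{1} = r_{1}p^{k_{1}}$ with $r_{1}$ a unit in $\Z_{p^{n}}$. Since $x \in P_{\ell}(h)$, the map $g_{\ell} : P_{\ell}(h) \rightarrow C(h)$ from Theorem~\ref{lo2} applies, and I would set
\[
(a_{1}, \, a_{2}, \, a_{3}) := g_{\ell}(x) = \left(x|_{1}, \, x|_{2}, \, x|_{3} - r_{1}^{-1}\ell p^{n - k_{1} - 1}\right).
\]
By the construction in Theorem~\ref{lo2} this element lies in $C(h)$, which is exactly what is needed. It then remains to substitute back: since $a_{3} + r_{1}^{-1}\ell p^{n - k_{1} - 1} = x|_{3}$, while $a_{1} = x|_{1}$ and $a_{2} = x|_{2}$, the triple $\left(a_{1}, \, a_{2}, \, a_{3} + r_{1}^{-1}\ell p^{n - k_{1} - 1}\right)$ equals $x$. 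Equivalently, this is just the identity $x = f_{\ell}(g_{\ell}(x))$ coming from the fact that $f_{\ell}$ and $g_{\ell}$ are mutual inverses, which gives the first displayed form.

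The case $r_{3} \ne 0$ is handled identically, using instead the pair of maps associated to the component $h|_{3} = r_{3}p^{k_{3}}$: here I would set $(a_{1}, \, a_{2}, \, a_{3}) := g_{\ell}(x) = \left(x|_{1} + r_{3}^{-1}\ell p^{n - k_{3} - 1}, \, x|_{2}, \, x|_{3}\right) \in C(h)$ and read off $x = f_{\ell}(a_{1}, \, a_{2}, \, a_{3}) = \left(a_{1} - r_{3}^{-1}\ell p^{n - k_{3} - 1}, \, a_{2}, \, a_{3}\right)$, which is the second displayed identity. I do not expect any genuine obstacle: the lemma is a bookkeeping restatement of the bijections already established in Theorem~\ref{lo2}, and the only points requiring care are the translation between the generic $(r, \, k)$ used there and the indexed $(r_{i}, \, k_{i})$ used here, together with keeping the signs of the correction terms $r_{i}^{-1}\ell p^{n - k_{i} - 1}$ straight.
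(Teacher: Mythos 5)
Your proposal is correct and is exactly the paper's argument: the paper's proof consists of the single line that the lemma ``follows from the functions constructed in Lemma~\ref{lo2},'' and you have simply spelled out that step by setting $(a_{1}, \, a_{2}, \, a_{3}) = g_{\ell}(x) \in C(h)$ and recovering $x = f_{\ell}(g_{\ell}(x))$, with the signs and the translation from the generic $(r, \, k)$ to the indexed $(r_{i}, \, k_{i})$ handled correctly in both cases.
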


\begin{proof}
This follows from the functions constructed in Lemma \ref{lo2}.
\end{proof}

\begin{Lemma}\label{premain1}
If $h \in \h(p^{n})^{\dagger},$ then $|C(h)| = p^{2}|C(q(h))|.$
\end{Lemma}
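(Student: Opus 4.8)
The plan is to compute the order of $P(h)$ in two independent ways and equate the results. Theorem~\ref{lo2} already supplies one expression, namely $|P(h)| = p|C(h)|$, which is valid precisely because $h$ is non-central. For a second expression I want to invoke Lemma~\ref{P-properties}(4), which reads $|P(H)| = |G_{1}||C(q(H))|$ for a \emph{subgroup} $H \le G$; in the short exact sequence (\ref{sh-H}) the kernel group is $G_{1} = \mathbb{Z}_{p}^{3}$, so $|G_{1}| = p^{3}$.

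The only gap to bridge is that property (4) is stated for subgroups, whereas $P(h)$ is the pseudocentralizer of an \emph{element}. I would close this gap by passing to the cyclic subgroup $H = \left<h\right>$. By Lemma~\ref{P-properties}(9), applied to the generating set $\{h\}$ of $\left<h\right>$, we have $P(h) = P(\left<h\right>)$. Applying (4) to $H = \left<h\right>$ then yields $|P(h)| = p^{3}\,|C(q(\left<h\right>))|$. Since $q$ is a homomorphism, $q(\left<h\right>) = \left<q(h)\right>$, and the centralizer of a cyclic subgroup equals the centralizer of a generator, so $C(q(\left<h\right>)) = C(q(h))$. Hence $|P(h)| = p^{3}\,|C(q(h))|$.

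Equating the two computations of $|P(h)|$ gives $p\,|C(h)| = p^{3}\,|C(q(h))|$, and dividing by $p$ finishes the proof: $|C(h)| = p^{2}\,|C(q(h))|$. There is no substantive obstacle here beyond the bookkeeping of reducing from the element $h$ to the cyclic group $\left<h\right>$ so that the subgroup-level formula (4) applies; once that reduction is in place, the result is immediate from Theorem~\ref{lo2} and Lemma~\ref{P-properties} together with the observation that $|\mathbb{Z}_{p}^{3}| = p^{3}$.
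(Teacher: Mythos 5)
Your proof is correct and follows essentially the same route as the paper, which likewise equates $|P(h)| = p|C(h)|$ from Theorem~\ref{lo2} with $|P(h)| = |\ker q|\,|C(q(h))| = p^{3}|C(q(h))|$ from Lemma~\ref{P-properties}(4); the paper simply applies (4) to the element $h$ directly, leaving implicit the reduction to $\left<h\right>$ that you spell out. Your careful bookkeeping via Lemma~\ref{P-properties}(9) and the identities $q(\left<h\right>) = \left<q(h)\right>$ and $C(\left<q(h)\right>) = C(q(h))$ is a welcome clarification but not a different argument.
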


\begin{proof}
Combining Lemma~\ref{P-properties} (4), Lemma~\ref{short}, and Theorem~\ref{lo2} we have:
\[
p|C(h)| = |P(h)| = |\ker q||C(q(h))| = \left|\Z_{p}^{3}\right||C(q(h))| = p^{3}|C(q(h))|.
\]
And so, $|C(h)| = p^{2}|C(q(h))|.$
\end{proof}

\begin{Remark}
Since $Z(\h(p^n))$ is cyclic, we can assume that any generating set of a subgroup $H$ of $\h(p^n)$ contains at most one central element.
\end{Remark}

\begin{Corollary}\label{premain2}
Let $H \le \h(p^{n}).$

\noindent (1) If $HZ(\h(p^{n}))/Z(\h(p^{n}))$ is non-trivial and cyclic, then $|C(H)| = p^{2}|C(q(H))|.$

\noindent (2) If $H \leq Z(\h(p^{n})),$ then $|C(H)| = p^{3}|C(q(H))|.$
\end{Corollary}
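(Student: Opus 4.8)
The plan is to reduce both statements to the single-element computation of Lemma~\ref{premain1}, exploiting repeatedly that in the class-two group $\hb{n}$ the center is centralized by everything, so adjoining or deleting central elements never changes a centralizer. Write $Z = Z(\hb{n})$ and $Z' = Z(\hb{n-1})$. From Lemma~\ref{conmutador} we have $Z = \{(0,x,0) \mid x \in \Z_{p^{n}}\}$, and since $x \mapsto x \bmod p^{n-1}$ is onto $\Z_{p^{n-1}}$, the map $q$ carries $Z$ \emph{onto} $Z'$, i.e. $q(Z) = Z'$. This surjectivity is the fact I will lean on when moving the argument across $q$.

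Part (2) should be immediate. If $H \le Z$, then $C(H) = \hb{n}$, so $|C(H)| = p^{3n}$; on the other side $q(H) \le q(Z) = Z'$, whence $C(q(H)) = \hb{n-1}$ and $|C(q(H))| = p^{3(n-1)}$. Comparing orders gives $|C(H)| = p^{3n} = p^{3}\,p^{3(n-1)} = p^{3}|C(q(H))|$. (Alternatively, the forward direction of Lemma~\ref{PnoC2} gives $P(H) = C(H)$, and then $|C(H)| = |P(H)| = p^{3}|C(q(H))|$ is read off directly from Lemma~\ref{P-properties}(4).)

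For part (1), since $HZ/Z$ is non-trivial and cyclic I would pick $h \in H$ with $HZ/Z = \langle hZ\rangle$; then $h$ is non-central and, by the correspondence theorem, $HZ = \langle h\rangle Z$. Because $Z$ is central, $C(HZ) = C(H) \cap C(Z) = C(H)$ and likewise $C(\langle h\rangle Z) = C(h)$, after discarding the central generator of $Z$ via the centralizer form of the generating-set identity used in Lemma~\ref{l:P(H)=P(SN)}; hence $C(H) = C(h)$. Applying $q$ to the identity $HZ = \langle h\rangle Z$ and using $q(Z) = Z'$ yields $q(H)\,Z' = \langle q(h)\rangle Z'$ in $\hb{n-1}$, and the same central-element reduction gives $C(q(H)) = C(q(h))$. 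Now Lemma~\ref{premain1} applies to the non-central $h$, so $|C(h)| = p^{2}|C(q(h))|$, and therefore $|C(H)| = |C(h)| = p^{2}|C(q(h))| = p^{2}|C(q(H))|$.

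The step requiring the most care is tracking the center through $q$: one must verify that $q$ maps $Z$ \emph{onto} (not merely into) $Z'$, so that the reductions $C(HZ) = C(H)$ and $C(q(H)\,Z') = C(q(H))$ are both legitimate, and that applying $q$ preserves $HZ = \langle h\rangle Z$ as an equality of subgroups, giving $q(H)\,Z' = \langle q(h)\rangle Z'$. Everything else is the routine observation that, in a class-two group, the centralizer of a subgroup depends only on that subgroup modulo the center.
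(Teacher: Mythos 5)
Your proposal is correct and takes essentially the same route as the paper's proof: both parts reduce to Lemma~\ref{premain1} applied to a single non-central element by exploiting that central elements never affect centralizers --- the paper does this by writing $H = \left<h_{1}, \, h_{2}\right>$ with $h_{1}$ non-central and $h_{2} \in Z(\hb{n})$ and computing $C(H) = C(h_{1}),$ while you package the same reduction as $C(HZ) = C(H)$ together with $HZ = \left<h\right>Z.$ One small remark: the surjectivity $q(Z) = Z(\hb{n-1})$ that you single out as the delicate point is not actually needed --- applying $q$ to $HZ = \left<h\right>Z$ gives $q(H)\,q(Z) = \left<q(h)\right>q(Z),$ and the mere containment $q(Z) \le Z(\hb{n-1})$ already yields $C(q(H)\,q(Z)) = C(q(H))$ and $C(\left<q(h)\right>q(Z)) = C(q(h)),$ so that step is automatic.
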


\begin{proof}
(1) Let $H = \left< h_{1}, \, h_{2}  \right>$ with $h_{1} \in \h(p^{n})^{\dagger}$ and $h_{2} \in Z(\h(p^n)).$
By Lemma~\ref{premain1}, we obtain:
\begin{eqnarray*}
|C(H)| & = & |C(h_{1}) \cap  C(h_{2}) | = |C(h_{1}) \cap \h(p^{n})| = |C(h_{1})| = p^{2}|C(q(h_{1}))|\\
       & = & p^{2}\left|C(q(h_{1})) \cap \h \left(p^{n - 1}\right)\right| = p^{2}|C(q(h_{1})) \cap C(q(h_{2}))|\\
       & = & p^{2}|C(\left<q(h_{1}), \, q(h_{2})\right>)| = p^{2}|C(q(H))|.
\end{eqnarray*}

\noindent (2) If $H$ is central, then $C(H) = \h(p^n)$ and $C(q(H)) = \h \left(p^{n - 1}\right).$ Therefore, $|C(H)| = p^{3n} = p^{3}p^{3n - 3} = p^{3}|C(q(H))|.$
\end{proof}

By Lemma~\ref{PnoC2}, $P(H)/C(H)$ is trivial whenever $H \leq Z(\h(p^{n})).$ The next lemma describes $P(H)/C(H)$ when $H$ is not in $Z(\h(p^{n})).$

\begin{Lemma}\label{diagramChasing}
Let $H \le \hb{n}$ and $H \nsubseteq Z(\h(p^{n})).$ Suppose $H^{\dagger} = \{h_{1}, \, \ldots, \, h_{m}\}.$ The canonical homomorphism $P(H) \rightarrow \prod_{h \in H^{\dagger}} P(h)$ defined by $x \mapsto (x, \, \ldots, \, x)$ induces an isomorphism $\theta: P(H)/C(H) \rightarrow \mathbb{Z}^{k}_{p},$ where $k = 1$ or $2$.
\end{Lemma}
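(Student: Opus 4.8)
The plan is to build the isomorphism $\theta$ by first mapping $P(H)$ into the finite product $\prod_{h \in H^\dagger} P(h)$ via the diagonal $x \mapsto (x, \ldots, x)$, then composing with the natural quotient maps $P(h) \to P(h)/C(h)$. By Theorem~\ref{lo2}, each factor $P(h)/C(h) \cong \mathbb{Z}_p$ since every $h \in H^\dagger$ is non-central, so the target of the composite is $\mathbb{Z}_p^m$. The first task is to identify the kernel of this composite homomorphism $\Phi: P(H) \to \mathbb{Z}_p^m$. An element $x$ lies in $\ker \Phi$ exactly when $x \in C(h)$ for every $h \in H^\dagger$, i.e.\ $x \in \bigcap_{h \in H^\dagger} C(h) = C(H^\dagger) = C(H)$, using Lemma~\ref{l:P(H)=P(SN)} (applied with $S = H^\dagger$, noting $\langle H^\dagger \rangle$ together with the cyclic center recovers $H$) to identify $C(H^\dagger)$ with $C(H)$. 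This gives an injection $\theta: P(H)/C(H) \hookrightarrow \mathbb{Z}_p^m$.

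Since $\theta$ is injective and its codomain is an elementary abelian $p$-group $\mathbb{Z}_p^m$, the image is a subgroup of $\mathbb{Z}_p^m$, hence itself isomorphic to $\mathbb{Z}_p^k$ for some $0 \le k \le m$. The remaining work is to show $k \in \{1, 2\}$ rather than $0$ or something larger. That $k \ge 1$ is immediate: since $H \nsubseteq Z(\hb{n})$, Lemma~\ref{PnoC2} gives $P(H) \ne C(H)$, so the quotient is nontrivial. The substantive claim is the upper bound $k \le 2$. For this I would pass through $q$ and use the second-component structure. By Corollary~\ref{P}, membership in $P(h)$ is governed entirely by the single congruence class of $[h,g]|_2$ modulo $p^n$ lying in $\{0, p^{n-1}, \ldots, (p-1)p^{n-1}\}$, and by Lemma~\ref{conmutador}(1) we have $[h,g]|_2 = h|_1 g|_3 - h|_3 g|_1$. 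Thus the value of the $h$-coordinate of $\Phi(x)$ depends on $x$ only through the pair $(g|_1, g|_3) \bmod$ (appropriate powers of $p$), i.e.\ through at most two $\mathbb{Z}_p$-worth of data. Concretely, the map $x \mapsto \Phi(x)$ factors through the reduction of $(x|_1, x|_3)$, which takes values in a group admitting a surjection onto at most $\mathbb{Z}_p^2$; since the assignment is $\mathbb{Z}_p$-linear in $(x|_1, x|_3)$, its image has rank at most $2$.

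\textbf{The main obstacle} I expect is making the rank bound $k \le 2$ rigorous rather than heuristic. The clean way is to exhibit an explicit surjection or factorization: define $\psi: P(H) \to \mathbb{Z}_p \times \mathbb{Z}_p$ sending $x$ to the residues of $x|_1$ and $x|_3$ in the relevant quotients (those modding out by the $C(H)$-directions), and show that $\Phi$ factors as $\Phi = \lambda \circ \psi$ for some homomorphism $\lambda$. Because each coordinate $[h_i, x]|_2 = h_i|_1\, x|_3 - h_i|_3\, x|_1$ is an additive function of $(x|_1, x|_3)$ alone, such a factorization exists, and then $\operatorname{im}\theta = \operatorname{im}\Phi = \lambda(\operatorname{im}\psi) \le \lambda(\mathbb{Z}_p^2)$, forcing $k \le 2$. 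The delicate points are (i) verifying that the relevant reductions of $x|_1, x|_3$ land in copies of $\mathbb{Z}_p$ of the right size and are well-defined on cosets of $C(H)$, and (ii) confirming that no contribution to $[h_i, x]|_2$ comes from the second component $x|_2$ (which is clear from Lemma~\ref{conmutador}(1), since $h|_2$ and $x|_2$ never enter the commutator's second coordinate). Once the factorization through $\mathbb{Z}_p^2$ is established, both the injectivity (kernel $= C(H)$) and the bound $k \le 2$ combine to give the stated isomorphism $\theta: P(H)/C(H) \xrightarrow{\sim} \mathbb{Z}_p^k$ with $k \in \{1,2\}$.
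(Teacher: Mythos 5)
Your embedding argument matches the paper's: the diagonal map into $\prod_{h \in H^{\dagger}} P(h)/C(h) \cong \mathbb{Z}_{p}^{m}$ (via Theorem~\ref{lo2}), with kernel identified as $C(H)$ through $C(H) = C\left(H^{\dagger}\right),$ is exactly the commutative-diagram construction of $\hat{\theta}$ in the paper. Your bilinearity route to $k \le 2$ is also, at heart, the paper's argument in different clothing: the map $x \mapsto (x|_{1}, \, x|_{3})$ is precisely the quotient $P(H) \rightarrow P(H)/Z(\hb{n}) \le \hb{n}/Z(\hb{n}) \cong \mathbb{Z}_{p^{n}}^{2},$ and since $Z(\hb{n}) \le C(H),$ the two-generated group $P(H)/Z(\hb{n})$ surjects onto $P(H)/C(H).$ One repair is needed in your version: $(x|_{1}, \, x|_{3})$ ranges over a subgroup $A \le \mathbb{Z}_{p^{n}}^{2},$ not over $\mathbb{Z}_{p}^{2},$ so there is no $\mathbb{Z}_{p}$-linearity to invoke directly; instead, note that the image of your composite $\Phi$ is elementary abelian, so $\Phi$ kills $pA$ and factors through $A/pA,$ which has rank at most $2$ because every subgroup of $\mathbb{Z}_{p^{n}}^{2}$ is two-generated. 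With that fix, your factorization is sound and essentially coincides with the paper's one-line argument.

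The genuine gap is your justification that $k \ge 1.$ You cite Lemma~\ref{PnoC2} for ``$H \nsubseteq Z(\hb{n})$ implies $P(H) \ne C(H)$,'' but that is the \emph{converse} direction of Lemma~\ref{PnoC2}, which the paper explicitly defers and only proves after Definition~\ref{d:InjectiveSetForSubgroup} using Theorem~\ref{main}; Theorem~\ref{main} rests on Theorem~\ref{main1}, whose proof invokes Lemma~\ref{diagramChasing} itself. So, within the paper's logical order, your appeal is circular. The patch is a short direct construction in the spirit of Lemma~\ref{PnoC}: choose $h \in H^{\dagger}$ with $\nu(h) = \nu\left(H^{\dagger}\right) = k$ and, say, $h|_{1} = rp^{k}$ with $p \nmid r$ (the case where $\nu(h)$ is attained at the third coordinate is symmetric), and set $g = \left(0, \, 0, \, p^{n - k - 1}\right).$ For every $h' \in H^{\dagger}$ we have $[h', \, g]|_{2} = h'|_{1}\,p^{n - k - 1},$ which is divisible by $p^{n - 1}$ since $p^{k} \mid h'|_{1},$ so $g \in P\left(H^{\dagger}\right) = P(H)$ by Corollary~\ref{P}; meanwhile $[h, \, g]|_{2} = rp^{n - 1} \neq 0$ in $\Z_{p^{n}},$ so $g \notin C(h) \supseteq C(H).$ Hence $P(H) \neq C(H)$ and $k \ge 1,$ with no forward reference. (The paper's own proof is terse on this point as well, but it does not lean on the deferred half of Lemma~\ref{PnoC2}.)
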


\begin{proof}
We have the following commutative diagram:
$$\xymatrix{
&1&1\\
&P(H)/C(H)\ar[u]&P(h_{1})/C(h_{1})\times\cdots\times P(h_{m})/C(h_{m})\ar[u]\\
1\ar[r]&P(H)\ar[u]\ar[r]&P(h_{1})\times \cdots\times P(h_{m})\ar[u]\\
1\ar[r]&C(H)\ar[u]\ar[r]&C(h_{1})\times \cdots \times C(h_{m})\ar[u]\\
&1\ar[u]&1\ar[u]
}$$
\noindent where the homomorphism from $P(H)$ to $P(h_{1}) \times \cdots \times P(h_{m})$ is the one in the statement of the lemma. The quotient groups in the diagram make sense because of Lemma~\ref{CnormalinP}. By Theorem \ref{lo2}, we have $P(h_{i})/C(h_{i})\cong \mathbb{Z}_{p}$ for each $h_{i} \in H^{\dagger}.$

We construct a group homomorphism $\hat{\theta} : P(H)/C(H) \rightarrow \mathbb{Z}_{p}^{m}$ as follows: let $\overline{x} \in P(H)/C(H)$ where $x \in P(H)$. Push right to $(x, \, \ldots, \, x)$ and then up to $\displaystyle \Pi_{i=1}^{m} P(h_i) / C(h_i) \cong \mathbb Z_{p}^{m}.$ Observe that if $a, \, b \in P(H)$ are such that $ab^{-1} \in C(H),$ then $ab^{-1} \in C(h_{i})$ for all $1 \le i \le m.$ Thus, $\hat{\theta}$ is a well-defined homomorphism and extends the previous commutative diagram in the following way:
$$\xymatrix{
&1&1\\
&P(H)/C(H)\ar[u]\ar@{-->}[r]^{\hat{\theta}}&\mathbb{Z}^{m}_{p}\ar[u]\\
1\ar[r]&P(H)\ar[u]\ar[r]&P(h_{1})\times\cdots\times P(h_{m})\ar[u]\\
1\ar[r]&C(H)\ar[u]\ar[r]&C(h_{1})\times \cdots \times C(h_{m})\ar[u]\\
&1\ar[u]&1\ar[u]
}$$

It is clear that $\hat{\theta}$ is a monomorphism; for if $\overline{x} \in \ker \hat{\theta} \leq \displaystyle P(H)/C(H)$
where $x \in P(H)$, then $x$ commutes with every $h_{i}$. And so, $\hat{\theta}$ induces an isomorphism $\theta : P(H)/C(H) \rightarrow \mathbb{Z}_{p}^{k}$ for some $k \in \left \{ 1, \, \ldots, \, m \right \}.$

We claim that $k = 1$ or $2.$ We know that
\[
P(H)/Z(\hb{n}) \le \hb{n}/Z(\hb{n}) \cong \mathbb{Z}_{p^{n}}^{2}.
\]
Thus, $P(H)/Z(\hb{n})$ has either one or two generators. Since $Z(\hb{n})\le C(H)$, there is a surjective map $P(H)/Z(\hb{n})\to P(H)/C(H).$ Thus, $k = 1$ or $2.$
\end{proof}

We will want to identify elements of $\h(p^{n})$ that are equal modulo its center. We introduce the following notation.

\begin{Notation} \label{n:EquivCenter}
Let $x, \, y \in \h(p^{n}).$ We will write $x \sim y$ whenever $xy^{-1} \in Z(\h(p^{n})).$
\end{Notation}

It is easy to see that $x \sim y$ if and only if $x|_{1} = y|_{1}$ and $x|_{3} = y|_{3}.$

\begin{Lemma} \label{l:EquivCentPseudo}
If $x, \, y \in \h(p^{n})$ and $x \sim y,$ then $[w, \, x] = [w, \, y]$ for any $w \in \h(p^{n}).$ In addition, $C(x) = C(y)$ and $P(x) = P(y).$
\end{Lemma}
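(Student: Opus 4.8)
The plan is to reduce everything to the explicit commutator formula of Lemma~\ref{conmutador}(1), which shows that a commutator in $\hb{n}$ depends only on the first and third components of its two arguments. Since $x \sim y$ is equivalent to $x|_{1} = y|_{1}$ and $x|_{3} = y|_{3}$ (the characterization recorded just before the statement), this makes all three claims essentially immediate.

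First I would establish the commutator identity. By Lemma~\ref{conmutador}(1), for any $w \in \hb{n}$ we have $[w, \, x] = (0, \, w|_{1}x|_{3} - w|_{3}x|_{1}, \, 0)$ and $[w, \, y] = (0, \, w|_{1}y|_{3} - w|_{3}y|_{1}, \, 0).$ Substituting $x|_{1} = y|_{1}$ and $x|_{3} = y|_{3}$ into the middle component shows the two vectors coincide, giving $[w, \, x] = [w, \, y].$ The same substitution applied to $[x, \, w] = (0, \, x|_{1}w|_{3} - x|_{3}w|_{1}, \, 0)$ and to $[y, \, w]$ yields $[x, \, w] = [y, \, w]$ as well; I will record this version too, since $C(\cdot)$ and $P(\cdot)$ are defined with the fixed element in the first slot of the commutator.

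For $C(x) = C(y)$, I would invoke Corollary~\ref{C}(2): $C(h)$ consists of those $g$ with $[h, \, g]|_{2} = 0 \hbox{ mod } p^{n}.$ Because $[x, \, g] = [y, \, g]$ for every $g$ by the identity just proved, the defining condition $[x, \, g]|_{2} = 0$ holds exactly when $[y, \, g]|_{2} = 0,$ so the two centralizers consist of the same elements. Likewise, for $P(x) = P(y)$ I would use Corollary~\ref{P}(2), under which $g \in P(h)$ iff $[h, \, g]|_{2} = kp^{n-1}$ for some $0 \le k < p;$ again $[x, \, g]|_{2} = [y, \, g]|_{2}$ forces the two membership conditions to agree, whence $P(x) = P(y).$

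There is no genuine obstacle here: the whole lemma is a direct consequence of the fact that the commutator formula ignores the second component of both of its arguments, combined with the observation that $C(\cdot)$ and $P(\cdot)$ are characterized purely through the second component of a commutator. The only point requiring a little care is the bookkeeping about which argument of the commutator is held fixed, but the identical computation in both orderings shows this distinction is immaterial under the hypothesis $x \sim y.$
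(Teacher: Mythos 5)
Your proof is correct and takes essentially the same route as the paper, which disposes of the lemma in one line by citing Lemma~\ref{conmutador}: your explicit substitution of $x|_{1} = y|_{1}$ and $x|_{3} = y|_{3}$ into the commutator formula, followed by the membership criteria of Corollaries~\ref{C} and~\ref{P}, is exactly the computation that citation compresses. The only cosmetic difference is that the paper points at part (2) of that lemma (so that $xy^{-1} \in Z(\hb{n})$ gives $[w, \, x] = [w, \, y]$ since commutators absorb central factors in a class-$2$ group), whereas you work directly from the explicit formula in part (1); the content is identical.
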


\begin{proof}
It follows from Lemma~\ref{conmutador} (2).
\end{proof}

Lemma~\ref{pForm} gives a way of expressing an element $x \in P_{\ell}(h)$ in terms of an element of $C(h).$ A preferred form can be provided whenever the first or third component of $x$ is a unit in $\Z_{p^{n}}.$

\begin{Definition}
An element $x = (x_{1}, \, x_{2}, \, x_{3})$ in $\h(p^{n})$ is called {\it non-degenerate} if $p \nmid x_{1}$ or $p \nmid x_{3}.$ Otherwise, $x$ is {\it degenerate}.
\end{Definition}

Clearly, $x$ is non-degenerate if and only if either $x|_{1}$ or $x|_{3}$ is a unit in $\Z_{p^{n}}.$

\begin{Lemma}\label{nonDeg}
Suppose that $x, \, y \in \h(p^{n})$ and $x$ is non-degenerate. Then $y \in C(x)$ if and only if there exists a unique  $0 \le k < p^{n}$ such that $y \sim x^{k}.$
\end{Lemma}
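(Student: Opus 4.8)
The plan is to translate the whole statement into arithmetic in the first and third components and then reduce it to a single linear relation. First I would record the two facts that do all the work. By Lemma~\ref{conmutador} and Corollary~\ref{C}, the condition $y \in C(x)$ is equivalent to the single equation $x|_{1}\, y|_{3} = x|_{3}\, y|_{1}$ in $\Z_{p^{n}}$. By Lemma~\ref{known}, we have $x^{k}|_{1} = k\, x|_{1}$ and $x^{k}|_{3} = k\, x|_{3}$, so by the observation following Notation~\ref{n:EquivCenter} the relation $y \sim x^{k}$ is equivalent to the pair of equations $y|_{1} = k\, x|_{1}$ and $y|_{3} = k\, x|_{3}$. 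Thus the lemma amounts to the arithmetic claim that $x|_{1}\, y|_{3} = x|_{3}\, y|_{1}$ holds if and only if there is a unique $k \in [0, p^{n})$ scaling $(x|_{1}, x|_{3})$ to $(y|_{1}, y|_{3})$.

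The converse direction $(\Leftarrow)$ is immediate and uses no hypothesis: substituting $y|_{1} = k\, x|_{1}$ and $y|_{3} = k\, x|_{3}$ into $x|_{1}\, y|_{3} - x|_{3}\, y|_{1}$ yields $k\bigl(x|_{1}\, x|_{3} - x|_{3}\, x|_{1}\bigr) = 0$, so $y \in C(x)$.

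For the forward direction $(\Rightarrow)$ I would use non-degeneracy to invert one of the two components. Suppose $p \nmid x_{1}$, so that $x|_{1}$ is a unit in $\Z_{p^{n}}$, and let $k$ be the unique representative of $y|_{1}\,(x|_{1})^{-1}$ in $[0, p^{n})$. Then $k\, x|_{1} = y|_{1}$ by construction, and multiplying the centralizer equation $x|_{1}\, y|_{3} = x|_{3}\, y|_{1}$ by $(x|_{1})^{-1}$ gives $y|_{3} = x|_{3}\,(x|_{1})^{-1}\, y|_{1} = k\, x|_{3}$; hence $y \sim x^{k}$. The case $p \nmid x_{3}$ is symmetric, inverting $x|_{3}$ instead and recovering the first component from the same equation. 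Uniqueness is the same computation run backwards: if $y \sim x^{k}$ and $y \sim x^{k'}$ then $k\, x|_{1} = k'\, x|_{1}$ and $k\, x|_{3} = k'\, x|_{3}$, and cancelling whichever of $x|_{1}, x|_{3}$ is a unit forces $k = k'$ in $\Z_{p^{n}}$, hence as integers in $[0, p^{n})$.

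The only real subtlety, and the place where the non-degeneracy hypothesis is indispensable, is that both the existence and the uniqueness of $k$ rely on cancelling a unit. I expect the main point to get across cleanly to be that a \emph{single} scalar $k$ simultaneously fixes both components: non-degeneracy lets me solve for $k$ from the invertible component, after which the centralizer equation forces the other component to match automatically rather than imposing a second, possibly incompatible, constraint on $k$. If neither component were a unit, the scaling map $k \mapsto (k\, x|_{1},\, k\, x|_{3})$ could fail to be injective or to hit $(y|_{1}, y|_{3})$, so the statement would genuinely break down; this is why the hypothesis appears exactly where it does.
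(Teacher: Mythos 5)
Your proposal is correct and follows essentially the same route as the paper: both reduce membership in $C(x)$ to the component equation $x|_{1}\,y|_{3} = x|_{3}\,y|_{1}$ in $\Z_{p^{n}}$ via Corollary~\ref{C}, define $k$ by inverting the unit component (your $k = y|_{1}(x|_{1})^{-1}$ is exactly the paper's $k = l\,y|_{1}$ with $l = (x|_{1})^{-1}$), verify the other component via Lemma~\ref{known}, and obtain uniqueness by cancelling the unit. The only cosmetic difference is that for the easy direction the paper cites Lemma~\ref{l:EquivCentPseudo} to get $[x,\,y] = [x,\,x^{k}] = 1$, whereas you carry out the equivalent substitution directly.
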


\begin{proof}
If there exists $0 \le k < p^{n}$ such that $y \sim x^{k},$ then $[x, \, y] = \Bigl[x, \, x^{k}\Bigr] = 1$ by Lemma~\ref{l:EquivCentPseudo}. Thus, $y \in C(x).$

Conversely, suppose that $y \in C(x).$ By Corollary~\ref{C},
\begin{equation}\label{e:nonDeg}
x|_{1} y|_{3} \hbox{ mod } p^{n} = x|_{3}y|_{1} \hbox{ mod } p^{n}.
\end{equation}
Since $x$ is non-degenerate, either $p \nmid x|_{1}$ or $p \nmid x|_{3}.$ Let us assume that $p\nmid x|_{1}.$ Then $x|_{1}$ has an inverse, say $l,$ in the ring $\Z_{p^{n}}.$ Thus, (\ref{e:nonDeg}) can be rewritten as
\[
y|_{3} = l x|_{3}y|_{1} \hbox{ mod } p^{n}.
\]
Set $k = l y|_{1} \hbox{ mod } p^{n}.$ Using Lemma~\ref{known}, we get
\[
x^{k} = (x|_{1}, \, x|_{2}, \, x|_{3})^{k} = (k x|_{1}, \, \hat{x}, \, k x|_{3}) \sim (l x|_{1} y|_{1}, \, y|_{2}, \, l y|_{1} x|_{3}) = (y|_{1}, \, y|_{2}, \, y|_{3}),
\]
where $\hat{x} = \left( kx|_{2} + \displaystyle {k \choose 2}x|_{1} x|_{3}\right) \hbox{mod } p^{n}.$ A similar proof holds when $p \nmid x|_{3}.$

We now establish the uniqueness of $k$. Suppose that $y \sim x^{k_1} \sim x^{k_2}$ for some $0 \leq k_1 < p^{n}$ and $0 \leq k_2 < p^{n}.$ Assume, without loss of generality, that $p$ does not divide $x |_1.$ By Lemma~\ref{known}, $k_1 x |_1 = k_2 x_1.$ Since $x \vert_1$ is a unit in $\mathbb Z_{p^n}$, we conclude that $k_1 = k_2.$
\end{proof}

The next corollary gives an explicit form of the elements of $P_{\ell}(h)$ for a non-degenerate element $h$.

\begin{Corollary}\label{specificPForm}
Let $h = \Bigl(r_{1}p^{k_{1}}, \, r_{2}p^{k_{2}}, \, r_{3}p^{k_{3}}\Bigr) \in \h(p^{n})$ be non-degenerate. If $x \in P_{\ell}(h)$ for some $0 \leq \ell < p,$ then
\begin{eqnarray*}
x & = & \Bigl(wr_{1}p^{k_{1}}, \, \widehat{h}, \, wr_{3}p^{k_{3}} + r_{1}^{-1} \ell p^{n - k_{1} - 1}\Bigr) \quad \text{if} \quad r_{1}\ne 0 \hbox{ \ \ \ and}\\
x & = & \Bigl(wr_{1}p^{k_{1}} - r_{3}^{-1} \ell p^{n - k_{3} - 1}, \, \widehat{h}, \, wr_{3}p^{k_{3}}\Bigr) \quad \text{if} \quad r_{3}\ne 0,
\end{eqnarray*}
where $0 \le w < p^{n}$ and $0 \le \widehat{h} < p^{n}.$
\end{Corollary}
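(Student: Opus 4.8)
The plan is to combine Lemma~\ref{pForm} with Lemma~\ref{nonDeg}, using the non-degeneracy of $h$ as the bridge between the two. Suppose first that $r_1 \ne 0$. By Lemma~\ref{pForm}, any $x \in P_{\ell}(h)$ can be written as $x = (a_1, \, a_2, \, a_3 + r_1^{-1}\ell p^{n - k_1 - 1})$ for some $(a_1, \, a_2, \, a_3) \in C(h)$. The remaining task is to pin down the first and third components $a_1$ and $a_3$; the second component will stay free and is exactly what gets recorded as $\widehat{h}$.

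To identify $a_1$ and $a_3$, I would invoke Lemma~\ref{nonDeg} with the pair $(h, \, (a_1, \, a_2, \, a_3))$. Since $h$ is non-degenerate by hypothesis and $(a_1, \, a_2, \, a_3) \in C(h)$, the lemma produces a unique $0 \le w < p^{n}$ with $(a_1, \, a_2, \, a_3) \sim h^{w}$. By the characterization of $\sim$ (equality in the first and third components) together with the power formula of Lemma~\ref{known}, which gives $h^{w} = (wr_1 p^{k_1}, \, \ast, \, wr_3 p^{k_3})$ reduced mod $p^{n}$, I read off $a_1 = wr_1 p^{k_1}$ and $a_3 = wr_3 p^{k_3}$. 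Substituting these back into the expression for $x$ yields $x = (wr_1 p^{k_1}, \, a_2, \, wr_3 p^{k_3} + r_1^{-1}\ell p^{n - k_1 - 1})$, and setting $\widehat{h} := a_2$ gives the claimed form. The case $r_3 \ne 0$ is entirely parallel, using the second formula in Lemma~\ref{pForm} and the same computation of $h^{w}$.

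The argument is essentially bookkeeping, so I do not expect a serious obstacle, only two points that require care. First, the hypothesis that $h$ is non-degenerate is precisely what licenses the use of Lemma~\ref{nonDeg}; without it one cannot convert membership in $C(h)$ into a power of $h$, and the coupling $a_1 = wr_1 p^{k_1}$, $a_3 = wr_3 p^{k_3}$ (with a common $w$) would fail. Second, one must resist reading $\widehat{h}$ as a determined quantity: the relation $\sim$ constrains only the first and third components, so the middle entry $a_2$ of the centralizing element is unconstrained and is simply relabeled $\widehat{h}$ with $0 \le \widehat{h} < p^{n}$ after reduction. I would therefore present the $r_1 \ne 0$ case in full and dispatch $r_3 \ne 0$ by symmetry.
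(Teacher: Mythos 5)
Your proposal is correct and follows the paper's own proof essentially verbatim: both apply Lemma~\ref{pForm} to write $x$ in terms of a centralizing element $a \in C(h)$, then use non-degeneracy via Lemma~\ref{nonDeg} (with the power formula of Lemma~\ref{known}) to replace $a$'s first and third components by those of $h^{w}$ for a single $w$. If anything, your handling of the middle entry is slightly more careful than the paper's, which labels $\widehat{h}$ as the second component of $h^{w}$ even though $\sim$ only constrains the first and third components, whereas you correctly observe that $\widehat{h}$ is just the unconstrained $a_2$ relabeled.
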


\begin{proof}
Let $h = \left(r_1p^{k_1}, \, r_{2}p^{k_2}, \, r_{3}p^{k_{3}} \right)$ be non-degenerate. Assume that $r_1 \neq 0$ and $0 \leq k_1 < n.$ Fix $0 \leq \ell <p$ and let $x \in P_{\ell}(h).$ By Lemma~\ref{pForm}, there exists $a \in C(h)$ such that
\[
x = \Bigl( a |_1, \, a |_2, \, a |_3 + r_1^{-1} \ell p^{n - k_1 - 1}  \Bigr).
\]
Lemma~\ref{nonDeg} allows us to choose $0 \leq w < p^n$ such that $a \sim h^{w} = \left( wr_1 p^{k_1}, \, \widehat{h}, \, wr_3p^{k_3} \right)$ for some $0 \leq \widehat{h} < p^n.$ Thus, we can set $x = \left( wr_1p^{k_1}, \, \widehat{h}, \, wr_3 p^{k_3} + r_{1}^{-1} \ell p^{n - k_1 - 1} \right).$
The case where $r_3 \neq 0$ is proven similarly.
\end{proof}

\begin{Definition}\label{d:EtaNu}
Let $h = \Bigl(r_{1}p^{k_{1}}, \, r_{2}p^{k_{2}}, \, r_{3}p^{k_{3}}\Bigr) \in \h(p^{n})^{\dagger}.$ If $r_{i} = 0,$ then we set $k_{i} = n.$ We define the following:
\begin{enumerate}
\item $\nu(h) = \min\{k_{1}, \, k_{3}\}$;
\item For $S = \{h_{1}, \, \ldots, \, h_{m}\} \subseteq \h(p^{n})^{\dagger},$ we set
\[
\nu(S) = \nu(h_{1}, \, \ldots, \, h_{m}) = \min\{\nu(h_{1}), \, \ldots, \, \nu(h_{m})\};
\]
\item $\n{h} = \Bigl(r_{1}p^{k_{1} - \nu(h)}, \, 0, \, r_{3}p^{k_{3} - \nu(h)} \Bigr) \in \h(p^{n}).$
\end{enumerate}
\end{Definition}

The next lemma follows from the definitions and Lemma \ref{conmutador} (1).

\begin{Lemma} \label{l:PropNu}
Suppose that $h = \Bigl(r_{1}p^{k_{1}}, \, r_{2}p^{k_{2}}, \, r_{3}p^{k_{3}}\Bigr) \in \h(p^{n})^{\dagger}.$ The following hold:
\begin{enumerate}
\item $0 \le \nu(h) < n$;
\item If $h$ is degenerate, then $\nu(h) > 0$;
\item If $h$ is non-degenerate, then $\nu(h) = 0$;
\item $\n{h} \in C(h)$.
\end{enumerate}
\end{Lemma}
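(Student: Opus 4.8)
The plan is to dispatch the four assertions in turn, using throughout the factorization convention of Notation~\ref{n:Factored_h} together with the observation that membership in $\hb{n}^{\dagger}$ forces $r_{1} \neq 0$ or $r_{3} \neq 0$; indeed, by Lemma~\ref{conmutador} (2) the center consists exactly of the vectors $(0, \, x, \, 0)$.

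For part (1), I would first note that each $k_{i}$ is non-negative: when $r_{i} \neq 0$ we have $0 \le k_{i} < n$ by convention, and when $r_{i} = 0$ we have set $k_{i} = n$. Hence $\nu(h) = \min\{k_{1}, \, k_{3}\} \ge 0$. For the strict upper bound, non-centrality gives $r_{1} \neq 0$ or $r_{3} \neq 0$, say $r_{1} \neq 0$; then $k_{1} < n$ and so $\nu(h) \le k_{1} < n$.

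Parts (2) and (3) amount to translating the degeneracy condition into statements about $k_{1}$ and $k_{3}$. The key elementary fact I would isolate is that $p \mid h|_{i}$ if and only if $k_{i} \ge 1$: if $r_{i} \neq 0$ then $p \nmid r_{i}$, so $p \mid r_{i}p^{k_{i}}$ forces $k_{i} \ge 1$, whereas if $r_{i} = 0$ then $h|_{i} = 0$ and $k_{i} = n \ge 1$. Since $h$ is degenerate precisely when $p$ divides both $h|_{1}$ and $h|_{3}$, degeneracy yields $k_{1}, \, k_{3} \ge 1$ and hence $\nu(h) \ge 1$, proving (2). Conversely, non-degeneracy means $p \nmid h|_{1}$ or $p \nmid h|_{3}$; the corresponding $k_{i}$ then equals $0$, so $\nu(h) = 0$, proving (3).

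Part (4) is the computational heart, and I would reduce it via Corollary~\ref{C} to checking that $[h, \, \n{h}]|_{2} = 0$ in $\Z_{p^{n}}$. Applying Lemma~\ref{conmutador} (1) gives
\[
[h, \, \n{h}]|_{2} = h|_{1}\,\n{h}|_{3} - h|_{3}\,\n{h}|_{1} = r_{1}p^{k_{1}}\cdot r_{3}p^{k_{3} - \nu(h)} - r_{3}p^{k_{3}}\cdot r_{1}p^{k_{1} - \nu(h)},
\]
and the two products are identical, so their difference is $0$. Two bookkeeping points deserve mention: first, the exponents $k_{i} - \nu(h)$ are non-negative because $\nu(h) = \min\{k_{1}, \, k_{3}\}$, so that $\n{h}$ is genuinely an element of $\hb{n}$; second, in the degenerate edge cases $r_{1} = 0$ or $r_{3} = 0$ the cancellation still holds, since each of the two products carries both factors $r_{1}$ and $r_{3}$ and hence vanishes as soon as either is zero. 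I anticipate no real obstacle in this lemma; the only care required is the handling of the convention $k_{i} = n$ when $r_{i} = 0$ and the non-negativity of the shifted exponents, both immediate from the definition of $\nu(h)$.
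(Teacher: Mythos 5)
Your proposal is correct and matches the paper exactly in spirit: the paper gives no written proof, stating only that the lemma ``follows from the definitions and Lemma~\ref{conmutador} (1),'' and your argument is precisely that intended unpacking --- parts (1)--(3) from the conventions of Notation~\ref{n:Factored_h} and Definition~\ref{d:EtaNu} (including $k_{i} = n$ when $r_{i} = 0$), and part (4) from the commutator formula, where both terms equal $r_{1}r_{3}p^{k_{1} + k_{3} - \nu(h)}$ and cancel. Your bookkeeping on the non-negativity of the shifted exponents and the $r_{i} = 0$ edge cases is sound and fills in exactly the details the paper leaves to the reader.
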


Given a non-central element $h \in \h(p^{n})$ and an element $z \in P(h),$ we would like to find an element $y \in \h(p^{n})$ of a specific type such that $z \sim y.$ This is the point behind Lemma~ \ref{representation1}. First we provide some notation.

\begin{Notation}\label{n:Brace}
Let $h = \Bigl(r_{1}p^{k_{1}}, \, r_{2}p^{k_{2}}, \, r_{3}p^{k_{3}}\Bigr) \in \h(p^{n})^{\dagger}.$ If $a = (a_{1}, \, a_{2}, \, a_{3})$ is any element in $\h(p^{n}),$ then we set
\[
\left\{a_{i}\right\}_{h} = a_{i} \hbox{ mod } p^{n - \nu(h)} \hbox{ \ for \ } i = 1, \, 2, \, 3.
\]
(Here we are viewing the $a_{i}$ and $\left\{a_{i}\right\}_{h}$ as integers.)

Next we define an element $\left\{a \right\}_{h}$ in $\h \left(p^{n - \nu(h)} \right)$ as such:
\[
\left\{a \right\}_{h} = \Bigl(\left\{a_{1}\right \}_{h}, \, \left\{a_{2}\right\}_{h}, \, \left\{a_{3}\right \}_{h}\Bigr)
\]
\textbf{We emphasize that $a \in \h(p^{n})$ and $\left\{a \right\}_{h} \in \h \left(p^{n - \nu(h)} \right).$ It is clearly possible for $a_{i}$ to equal $\left\{a_{i}\right\}_{h}.$}
\end{Notation}

\begin{Lemma}\label{surjective}
Let $h = \Bigl(r_{1}p^{k_{1}}, \, r_{2}p^{k_{2}}, \, r_{3}p^{k_{3}}\Bigr) \in \hb{n}^{\dagger}.$ The mapping $a \mapsto \{ a \}_{h}$ induces an epimorphism
$\alpha: C(h) \rightarrow C\Bigl(\bigl \{ \n h \bigr \}_h \Bigr).$
\end{Lemma}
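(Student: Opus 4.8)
The plan is to identify $a \mapsto \{a\}_{h}$ with the reduction homomorphism $\rho \colon \hb{n} \to \hb{n - \nu(h)}$ that reduces every component modulo $p^{n - \nu(h)}$, and then to restrict $\rho$ to $C(h)$. Since reduction $\Z_{p^{n}} \to \Z_{p^{n - \nu(h)}}$ is a ring epimorphism and the Heisenberg multiplication is built only from ring addition and multiplication, $\rho$ is a surjective group homomorphism (it is a composition of reduction maps of the same type as $q$). Consequently $\alpha := \rho|_{C(h)}$ is automatically a homomorphism and $\{a\}_{h} = \rho(a)$ by definition, so the only substantive tasks are to show that the image of $\alpha$ lands in $C\bigl(\{\n{h}\}_{h}\bigr)$ and that $\alpha$ is onto. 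At the start I would also record, writing $h = (r_{1}p^{k_{1}}, \, r_{2}p^{k_{2}}, \, r_{3}p^{k_{3}})$, that the element $\{\n{h}\}_{h}$ has components $r_{1}p^{k_{1} - \nu(h)}, \, 0, \, r_{3}p^{k_{3} - \nu(h)}$: each of these integers is already smaller than $p^{n - \nu(h)}$, so reduction leaves it numerically unchanged, and since $\nu(h) = \min\{k_{1}, \, k_{3}\}$ by Definition~\ref{d:EtaNu}, at least one of the first and third components is a unit, so $\{\n{h}\}_{h}$ is non-degenerate.

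Next I would rewrite both centralizer memberships as congruences through Corollary~\ref{C}. For $g \in \hb{n}$, membership $g \in C(h)$ is equivalent to
\[
r_{1}p^{k_{1}}\, g|_{3} - r_{3}p^{k_{3}}\, g|_{1} \equiv 0 \pmod{p^{n}},
\]
whereas for $c \in \hb{n - \nu(h)}$, membership $c \in C\bigl(\{\n{h}\}_{h}\bigr)$ is equivalent to
\[
r_{1}p^{k_{1} - \nu(h)}\, c|_{3} - r_{3}p^{k_{3} - \nu(h)}\, c|_{1} \equiv 0 \pmod{p^{n - \nu(h)}}.
\]
The entire argument rests on the single identity
\[
r_{1}p^{k_{1}}\, g|_{3} - r_{3}p^{k_{3}}\, g|_{1} = p^{\nu(h)}\Bigl(r_{1}p^{k_{1} - \nu(h)}\, g|_{3} - r_{3}p^{k_{3} - \nu(h)}\, g|_{1}\Bigr),
\]
which is legitimate because $k_{1}, \, k_{3} \ge \nu(h)$. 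Since the $i$th component of $\{a\}_{h}$ is congruent to $a|_{i}$ modulo $p^{n - \nu(h)}$, reducing $a$ does not change the parenthesized expression modulo $p^{n - \nu(h)}$; hence dividing the first congruence by $p^{\nu(h)}$ converts a relation modulo $p^{n}$ into one modulo $p^{n - \nu(h)}$. This at once gives $\alpha\bigl(C(h)\bigr) \subseteq C\bigl(\{\n{h}\}_{h}\bigr)$.

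For surjectivity I would lift a given $c \in C\bigl(\{\n{h}\}_{h}\bigr)$ by the most naive choice: take the integer representatives $0 \le c|_{i} < p^{n - \nu(h)}$ and read $a = (c|_{1}, \, c|_{2}, \, c|_{3})$ as an element of $\hb{n}$, so that $\{a\}_{h} = c$ by construction. Running the displayed identity in reverse, from $r_{1}p^{k_{1} - \nu(h)}\, c|_{3} - r_{3}p^{k_{3} - \nu(h)}\, c|_{1} = p^{n - \nu(h)}M$ for some integer $M$ I obtain
\[
r_{1}p^{k_{1}}\, c|_{3} - r_{3}p^{k_{3}}\, c|_{1} = p^{\nu(h)}\cdot p^{n - \nu(h)}M = p^{n}M \equiv 0 \pmod{p^{n}},
\]
so $a \in C(h)$ by Corollary~\ref{C}. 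Therefore $\alpha$ is onto, and combined with the previous paragraph $\alpha \colon C(h) \to C\bigl(\{\n{h}\}_{h}\bigr)$ is the desired epimorphism.

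The computation is short, so I do not expect a serious obstacle; the points that demand the most care are purely bookkeeping. First, the degenerate cases $r_{1} = 0$ or $r_{3} = 0$ must be absorbed by the convention $k_{i} = n$ of Definition~\ref{d:EtaNu}, so that the exponents $k_{i} - \nu(h)$ stay non-negative and the vanishing term drops out consistently in both moduli. Second, one must confirm that the numerical value of each component of $\n{h}$ genuinely survives reduction, so that $\{\n{h}\}_{h}$ really is the element with components $r_{1}p^{k_{1} - \nu(h)}, \, 0, \, r_{3}p^{k_{3} - \nu(h)}$ used in the congruences above. The conceptual reason surjectivity comes out so cleanly is that the mod-$p^{n}$ centralizer condition differs from the mod-$p^{n - \nu(h)}$ one precisely by the factor $p^{\nu(h)}$ in both the expression and the modulus, so the canonical lift satisfies the finer condition with no adjustment.
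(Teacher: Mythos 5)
Your proposal is correct and follows essentially the same route as the paper's proof: both verify that the reduction $a \mapsto \{a\}_{h}$ is a homomorphism whose image lies in $C\bigl(\{\n{h}\}_{h}\bigr)$ by factoring $r_{1}p^{k_{1}}\,g|_{3} - r_{3}p^{k_{3}}\,g|_{1} = p^{\nu(h)}\bigl(r_{1}p^{k_{1}-\nu(h)}\,g|_{3} - r_{3}p^{k_{3}-\nu(h)}\,g|_{1}\bigr)$ and passing between the moduli $p^{n}$ and $p^{n-\nu(h)}$, and both prove surjectivity by the naive componentwise lift $a_{i} = w_{i}$ followed by multiplying the congruence back up by $p^{\nu(h)}$ (the paper phrases this last step via the Division Algorithm, and checks multiplicativity of $\{\cdot\}_{h}$ directly where you invoke the global reduction homomorphism, but these are cosmetic differences). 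Your only slippery point is the side remark that the components of $\n{h}$ are \emph{numerically} below $p^{n-\nu(h)}$ --- this depends on choosing the reduced representative $r_{i} < p^{n-k_{i}}$ in Notation~\ref{n:Factored_h} --- but it is immaterial since all your computations are congruences modulo $p^{n-\nu(h)}$.
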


\begin{proof}
We first show that $\alpha$ is a homomorphism. If $x, \, y \in C(h),$ then it is clear that $\left\{ xy \right\}_{h} = \left\{x \right\}_{h} \left\{y \right\}_{h}.$ We claim that if $z = (z_{1}, \, z_{2}, \, z_{3}) \in C(h),$ then $\left\{z\right\}_{h} \in C\Bigl(\bigl\{ \n h \bigr\}_{h} \Bigr).$ Indeed,
\[
\bigl[\left\{ \n h \right\}_{h}, \, \{z\}_{h}\bigr] \big|_{2} =
\left\{z_{3}r_{1}p^{k_{1} - \nu(h)} - z_{1}r_{3}p^{k_{3} - \nu(h)}\right\}_{h} \in \Z_{p^{n - \nu(h)}}.
\]
By Corollary~\ref{C}, $z \in C(h)$ implies that $[h, \, z] \vert_2 = 0.$ Thus,
\[
\left(z_{3}r_{1}p^{k_{1} - \nu(h)} - z_{1}r_{3}p^{k_{3} - \nu(h)}\right) \hbox{ mod } p^{n - \nu(h)} = 0
\]
or, equivalently, $\bigl[\left\{ \n h \right\}_{h} , \, \{z\}_{h}\bigr] \big|_{2} = 0.$ And so, $\{z\}_{h} \in C\Bigl(\bigl \{ \n h \bigr \}_h \Bigr)$ as claimed. Thus, $\alpha$ is a homomorphism.

Next we prove that $\alpha$ is onto. Let $w = (w_{1}, \, w_{2}, \, w_{3}) \in C\Bigl(\bigl\{ \n h \bigr\}_{h} \Bigr)$.
If we let $a = (a_{1}, \, a_{2}, \, a_{3}) \in \hb{n},$ where $a_{i} \hbox{ mod } p^{n - \nu(h)} = w_{i}$ for each $i = 1, \, 2, \, 3,$ then $\left\{a \right\}_h = w.$ Note that the $a_{i}$ exist (simply take $a_{i} = w_{i}$). We need to show that $a \in C(h).$ Since $w \in C\Bigl( \bigl\{ \n h \bigr\}_{h} \Bigr),$ we have $\bigl[\left\{ \n h  \right\}_h, \, w \bigr] \bigl |_{2} = 0$ by Corollary~\ref{C}. Hence,
\[
\biggl(w_{3}\left\{r_{1}p^{k_{1} - \nu(h)}\right\}_{h} - w_{1}\left\{r_{3}p^{k_{3}-\nu(h)}\right\}_{h}\biggr) \hbox{ mod }p^{n - \nu(h)} = 0
\]
by Lemma~\ref{conmutador}. Now, $\left\{a \right\}_h = w$ gives
\[
\biggl(\left\{a_{3}r_{1}p^{k_{1} - \nu(h)}\right\}_{h} - \left\{a_{1}r_{3}p^{k_{3}-\nu(h)}\right\}_{h}\biggr) \hbox{ mod }p^{n - \nu(h)} =0,
\]
which is the same as $\Bigl(\left\{a_{3}r_{1}\right\}_{h}p^{k_{1} - \nu(h)} - \left\{a_{1}r_{3}\right\}_{h}p^{k_{3}-\nu(h)}\Bigr) \hbox{ mod }p^{n - \nu(h)} = 0.$ Hence, $\Bigl(\left\{a_{3}r_{1}\right\}_{h}p^{k_{1}} - \left\{a_{1}r_{3}\right\}_{h}p^{k_{3}}\Bigr) \hbox{ mod }p^{n} = 0.$
By the Division Algorithm, there exist $s, \, t \in \N$ such that $a_{1}r_{3} = sp^{n - \nu(h)} + \{a_{1}r_{3}\}_{h}$ and $a_{3}r_{1} = tp^{n - \nu(h)} + \{a_{3}r_{1}\}_{h}.$ Thus, $\Bigl(a_{3}r_{1}p^{k_{1}} - a_{1}r_{3}p^{k_{3}}\Bigr)\hbox{ mod }p^{n} = 0.$ By Corollary~\ref{C}, $a \in C(h).$
\end{proof}

\begin{Lemma} \label{l:BetaEquiv}
Let $\alpha$ be the mapping defined in Lemma~\ref{surjective}. If $\alpha(x) \sim \alpha(y)$ in $\hb{n-\nu(h)},$ then there is an element in $C(h) \cap \ker \alpha$ of the form $\left(r^{\prime}_1, \, 0 , \, r^{\prime}_3 \right)^{p^{n-\nu(h)}}$ where $0 \leq r^{\prime}_1, \, r^{\prime}_3 < p^{\nu(h)}$ and $x \sim y \left(r^{\prime}_1, \, 0 , \, r^{\prime}_3 \right)^{p^{n-\nu(h)}} $ in $\hb{n}.$
\end{Lemma}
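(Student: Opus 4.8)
The plan is to pin down the element $w := \left(r'_1, 0, r'_3\right)^{p^{n-\nu(h)}}$ explicitly---its first and third components are forced by the requirement $x \sim yw$---and then to check the three assertions (membership in $C(h)$, membership in $\ker\alpha$, and $x \sim yw$) by direct computation. Throughout I write $\nu = \nu(h)$ and $m = n-\nu$, and I record $h = \left(r_1 p^{k_1}, r_2 p^{k_2}, r_3 p^{k_3}\right)$ as in Notation~\ref{n:Factored_h}, so that $k_1, k_3 \ge \nu$ and $m + \nu = n$.

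First I would unwind the hypothesis. Since $\alpha(x) = \{x\}_h$ and $\alpha(y) = \{y\}_h$, the relation $\alpha(x) \sim \alpha(y)$ in $\h\left(p^{m}\right)$ means, by the characterization of $\sim$ recorded after Notation~\ref{n:EquivCenter}, that $x|_1 \equiv y|_1$ and $x|_3 \equiv y|_3 \pmod{p^{m}}$. Hence $x|_1 - y|_1$ and $x|_3 - y|_3$ are multiples of $p^{m}$ in $\Z_{p^{n}}$, and there are unique $r'_1, r'_3 \in \{0, \ldots, p^{\nu}-1\}$ with $p^{m} r'_1 \equiv x|_1 - y|_1$ and $p^{m} r'_3 \equiv x|_3 - y|_3 \pmod{p^{n}}$. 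These choices define $w$.

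The three verifications are then short. For $x \sim yw$: by Lemma~\ref{known} the first and third components of $w$ are $p^{m} r'_1$ and $p^{m} r'_3$, so $(yw)|_1 = y|_1 + p^{m} r'_1 = x|_1$ and likewise $(yw)|_3 = x|_3$, whence $x \sim yw$. For $w \in C(h)$: by Corollary~\ref{C} it is enough that $[h, w]|_2 = h|_1\left(p^{m} r'_3\right) - h|_3\left(p^{m} r'_1\right) \equiv 0 \pmod{p^{n}}$, and this holds because $p^{\nu}$ divides both $h|_1$ and $h|_3$ while $m + \nu = n$. For $w \in \ker\alpha$: the outer entries of $\{w\}_h$ vanish since $p^{m}$ divides $w|_1$ and $w|_3$, so everything comes down to the middle entry $w|_2 = \binom{p^{m}}{2} r'_1 r'_3$ and the divisibility claim $p^{m} \mid \binom{p^{m}}{2}$.

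That divisibility is exactly where I expect the difficulty to sit, and it is prime-sensitive. For odd $p$ one has $\binom{p^{m}}{2} = p^{m}\cdot\frac{p^{m}-1}{2}$ with $\frac{p^{m}-1}{2} \in \Z$, so $p^{m} \mid \binom{p^{m}}{2}$ and $\{w|_2\}_h = 0$ at once. For $p = 2$, however, $\binom{2^{m}}{2} = 2^{m-1}\left(2^{m}-1\right) \equiv 2^{m-1} \pmod{2^{m}}$, so when $r'_1$ and $r'_3$ are both odd the group power acquires a middle entry congruent to $2^{m-1} \ne 0$, placing $\alpha(w)$ in the center of $\h\left(p^{m}\right)$ but outside $\ker\alpha$. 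The main obstacle is therefore the $p = 2$ instance of the $\ker\alpha$ step; I would resolve it by taking as representative the element $\left(p^{m} r'_1, 0, p^{m} r'_3\right)$, whose middle entry is literally $0$. It shares its first and third components with the group power, so it still lies in $C(h)$ and realizes $x \sim yw$, and it now lies in $\ker\alpha$ for every prime $p$, securing the conclusion uniformly.
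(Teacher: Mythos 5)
Your proposal follows the same skeleton as the paper's own proof: unwind $\alpha(x) \sim \alpha(y)$ into $x|_i \equiv y|_i \pmod{p^{n-\nu(h)}}$ for $i = 1, \, 3,$ write $x|_i = y|_i + l_i p^{n-\nu(h)},$ reduce $l_i$ modulo $p^{\nu(h)}$ to obtain the forced values $r^{\prime}_i,$ and verify $x \sim y\bigl(r^{\prime}_1, \, 0, \, r^{\prime}_3\bigr)^{p^{n-\nu(h)}}$ and membership in $C(h)$ by direct computation (the paper routes the $C(h)$ step through Lemma~\ref{l:EquivCentPseudo} rather than Corollary~\ref{C}, a cosmetic difference). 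Where you genuinely depart from the paper is the kernel clause: the paper disposes of it with ``it readily follows from Lemma~\ref{l:EquivCentPseudo} that $\bigl(r^{\prime}_1, \, 0, \, r^{\prime}_3\bigr)^{p^{n - \nu(h)}} \in C(h) \cap \ker \alpha$,'' which silently ignores the middle component $\binom{p^{n-\nu(h)}}{2} r^{\prime}_1 r^{\prime}_3$ of the power supplied by Lemma~\ref{known}. Your observation that this is prime-sensitive is correct, and at $p = 2$ the paper's assertion is false as stated: take $n = 2,$ $h = (2, \, 0, \, 2)$ (so $\nu(h) = 1$), $y = (0, \, 0, \, 0),$ and $x = (2, \, 0, \, 2) \in C(h)$; the forced values are $r^{\prime}_1 = r^{\prime}_3 = 1,$ and $(1, \, 0, \, 1)^{2} = (2, \, 1, \, 2),$ whose image under $\alpha$ is the nontrivial central element $(0, \, 1, \, 0)$ of $\hb{1},$ so the power lies in $C(h)$ but not in $\ker \alpha,$ and since the $r^{\prime}_i$ are uniquely determined there is no alternative choice. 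Your repair---replacing the power by its $\sim$-equivalent representative $\bigl(p^{n-\nu(h)} r^{\prime}_1, \, 0, \, p^{n-\nu(h)} r^{\prime}_3\bigr),$ whose middle entry is literally zero---is sound and uniform in $p.$ The one caveat is that the repaired element is no longer literally ``of the form $\bigl(r^{\prime}_1, \, 0, \, r^{\prime}_3\bigr)^{p^{n-\nu(h)}}$'' as the statement demands, so strictly you have proved the lemma with the kernel element only $\sim$-equivalent to that power; but this weakening is harmless, since every downstream invocation (Lemma~\ref{representation1}, and thence Lemma~\ref{super} and Theorems~\ref{main1} and \ref{main}) uses only the relation $x \sim y\bigl(r^{\prime}_1, \, 0, \, r^{\prime}_3\bigr)^{p^{n-\nu(h)}}$ together with second-component commutator calculations, which by Lemma~\ref{l:EquivCentPseudo} are insensitive to central adjustments, and the $\ker \alpha$ clause itself is never used later. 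In short: your proposal is correct, matches the paper's approach, and in fact patches a genuine $p = 2$ gap in the paper's own final step.
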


\begin{proof}
Let $x = (x_1, \, x_2, \, x_3)$ and $y = (y_1, \, y_2, \, y_3)$ be elements of $C(h)$ such that $\alpha(x) \sim \alpha(y).$ Then $\left\{x \right \}_h \sim \left \{y \right \}_h$ and, thus, $\{x_i \}_h = \{ y_i \}_h$ for $i = 1, \, 3.$ Hence, $x_i\hbox{ mod }p^{n - \nu(h)} = y_i\hbox{ mod }p^{n - \nu(h)}$ and, consequently, $x_i = y_i + l_i p^{n - \nu(h)}$ for some $l_i \in \mathbb Z$ whenever $i = 1, \, 3.$ Hence,
\[
x \sim \left(y_1 + l_1 p^{n-\nu(h)}, \, y_2, \, y_3 +  l_3 p^{n-\nu(h)} \right) \sim (y_1, \, y_2, \, y_3) \left( l_1, \, 0 , \, l_3 \right)^{p^{n-\nu(h)}}.
\]
Now, there exists $0 \leq r^{\prime}_i < p^{\nu(h)}$ for $i = 1, \, 3$ such that
\[
\left( l_1, \, 0 , \, l_3 \right)^{p^{n-\nu(h)}} \sim \left(r^{\prime}_1, \, 0 , \, r^{\prime}_3 \right)^{p^{n-\nu(h)}}.
\]
It readily follows from Lemma~\ref{l:EquivCentPseudo}
that $(r^{\prime}_1, \, 0,  \, r^{\prime}_3)^{p^{n - \nu(h)}} \in C(h) \cap \ker \alpha.$
\end{proof}

We now invoke the last two lemmas to prove the following useful result.

\begin{Lemma}\label{representation1}
Let $h = \Bigl(r_{1}p^{k_{1}}, \, r_{2}p^{k_{2}}, \, r_{3}p^{k_{3}}\Bigr)$ and $z \in \hb{n}^{\dagger}.$ Then $z \in P(h)$ if and only if
\begin{equation}\label{p1}
z \sim \n{h}^{w_1}\Bigl(0, \, 0, \, r_{1}^{-1}\ell_1 p^{n- 1 - k_{1}}\Bigr)\bigl(r^{\prime}_{11}, \, 0, \, r^{\prime}_{13}\bigr)^{p^{n - \nu(h)}} \qquad \text{if}\;r_{1} \ne 0
\end{equation}
and
\begin{equation}\label{p2}
z \sim \n{h}^{w_2}\Bigl(-r_{3}^{-1} \ell_2 p^{n- 1- k_{3} }, \, 0, \, 0\Bigr)\bigl(r^{\prime}_{21}, \, 0, \, r^{\prime}_{23}\bigr)^{p^{n - \nu(h)}} \qquad \text{if}\;r_{3}\ne 0
\end{equation}
for some integers $w_{i}$, $\, 0 \le {\ell}_{i} < p,$ and $0 \le r_{i1}^{\prime}, \, r_{i3}^{\prime} < p^{\nu(h)}$\, ($i=1, \, 2$).
\end{Lemma}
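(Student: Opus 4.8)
The plan is to prove the two implications separately, carrying out the case $r_{1} \ne 0$ (yielding (\ref{p1})) in full and noting that the case $r_{3} \ne 0$ (yielding (\ref{p2})) is identical with the roles of the first and third components interchanged. Throughout I would exploit that $\h(p^{n})$ is nilpotent of class $2$ (Lemma~\ref{conmutador}), so that any two elements commute modulo $Z(\h(p^{n}))$; consequently the three factors appearing in (\ref{p1}) may be permuted freely without disturbing the relation $\sim$, and I am free to arrange them in whatever order is convenient.

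For the forward direction, suppose $z \in P(h)$. By Corollary~\ref{P} we have $z \in P_{\ell}(h)$ for some $0 \le \ell < p$, so Lemma~\ref{pForm} (case $r_{1} \ne 0$) furnishes $a = (a_{1}, a_{2}, a_{3}) \in C(h)$ with $z = \left(a_{1}, a_{2}, a_{3} + r_{1}^{-1}\ell p^{n-k_{1}-1}\right)$; a direct multiplication then gives $z \sim a\,\bigl(0, 0, r_{1}^{-1}\ell p^{n-1-k_{1}}\bigr)$. The task thus reduces to rewriting the centralizer element $a$ in the desired shape. Here I would pass to the quotient through the epimorphism $\alpha : C(h) \to C\bigl(\{\n h\}_{h}\bigr)$ of Lemma~\ref{surjective}. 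The key observation is that $\{\n h\}_{h}$ is non-degenerate in $\h\left(p^{n-\nu(h)}\right)$: since $\nu(h) = \min\{k_{1}, k_{3}\}$, one of the exponents $k_{1}-\nu(h)$, $k_{3}-\nu(h)$ equals $0$, and the corresponding $r_{i}$ is a unit (it is nonzero, for otherwise Definition~\ref{d:EtaNu} would force $k_{i} = n$, contradicting $\nu(h) < n$ from Lemma~\ref{l:PropNu}), so one coordinate of $\{\n h\}_{h}$ is a unit in $\Z_{p^{n-\nu(h)}}$.

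With non-degeneracy in hand, Lemma~\ref{nonDeg} applies to $\{\n h\}_{h}$, and since $\alpha(a) \in C\bigl(\{\n h\}_{h}\bigr)$ we obtain $\alpha(a) \sim \{\n h\}_{h}^{\,w_{1}} = \alpha\bigl(\n h^{\,w_{1}}\bigr)$ for some integer $w_{1}$. Now Lemma~\ref{l:BetaEquiv}, applied to $x = a$ and $y = \n h^{\,w_{1}}$, supplies $0 \le r_{11}^{\prime}, r_{13}^{\prime} < p^{\nu(h)}$ with $\bigl(r_{11}^{\prime}, 0, r_{13}^{\prime}\bigr)^{p^{n-\nu(h)}} \in C(h) \cap \ker\alpha$ and $a \sim \n h^{\,w_{1}}\bigl(r_{11}^{\prime}, 0, r_{13}^{\prime}\bigr)^{p^{n-\nu(h)}}$. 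Substituting into $z \sim a\,\bigl(0, 0, r_{1}^{-1}\ell p^{n-1-k_{1}}\bigr)$ and reordering the factors modulo the center produces exactly (\ref{p1}) with $\ell_{1} = \ell$. For the converse I would use Lemma~\ref{l:EquivCentPseudo} to compute $[h, z]|_{2}$ on the representative in (\ref{p1}); by the additivity of the second component of the commutator (Lemma~\ref{producttosum}) this splits into three contributions. The factor $\n h^{\,w_{1}}$ lies in $C(h)$ since $\n h \in C(h)$ (Lemma~\ref{l:PropNu}), contributing $0$; the factor $\bigl(r_{11}^{\prime}, 0, r_{13}^{\prime}\bigr)^{p^{n-\nu(h)}}$ also lies in $C(h)$, which I would check directly from Lemma~\ref{conmutador} by computing its commutator's second coordinate as $r_{1}r_{13}^{\prime}p^{k_{1}+n-\nu(h)} - r_{3}r_{11}^{\prime}p^{k_{3}+n-\nu(h)}$, vanishing mod $p^{n}$ because $k_{1}, k_{3} \ge \nu(h)$; and the remaining factor $\bigl(0, 0, r_{1}^{-1}\ell_{1} p^{n-1-k_{1}}\bigr)$ contributes $r_{1}p^{k_{1}} \cdot r_{1}^{-1}\ell_{1} p^{n-1-k_{1}} = \ell_{1}p^{n-1}$. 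Hence $[h, z]|_{2} = \ell_{1}p^{n-1}$ with $0 \le \ell_{1} < p$, so $z \in P(h)$ by Corollary~\ref{P}.

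The case $r_{3} \ne 0$ runs identically, using the second form in Lemma~\ref{pForm} and the factor $\bigl(-r_{3}^{-1}\ell_{2} p^{n-1-k_{3}}, 0, 0\bigr)$, whose commutator with $h$ has second coordinate $\ell_{2}p^{n-1}$. The only genuinely delicate point is the interface, in the forward direction, between the reduction $\alpha$ and the lifting Lemma~\ref{l:BetaEquiv}: one must confirm that the non-degeneracy of $\{\n h\}_{h}$ is precisely what licenses the single power $\n h^{\,w_{1}}$ via Lemma~\ref{nonDeg}, and that the discrepancy between $a$ and $\n h^{\,w_{1}}$ is absorbed exactly by a $\ker\alpha$ element of the form $\bigl(r_{11}^{\prime}, 0, r_{13}^{\prime}\bigr)^{p^{n-\nu(h)}}$. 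Everything else is routine commutator bookkeeping in a class-$2$ group.
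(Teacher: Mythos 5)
Your proposal is correct and follows essentially the same route as the paper's proof: reduce via Lemma~\ref{pForm} to rewriting the centralizer element $a$, verify that $\bigl\{\n h\bigr\}_{h}$ is non-degenerate, extract the power $\n{h}^{w_{1}}$ (the paper cites Corollary~\ref{specificPForm} with $\ell = 0$, which is just the packaged form of your direct appeal to Lemma~\ref{nonDeg}), absorb the discrepancy with Lemma~\ref{l:BetaEquiv}, and reorder factors modulo the center; the converse is the same commutator computation, which you in fact carry out in more detail than the paper does. Your streamlined non-degeneracy argument (the coordinate achieving $\nu(h)$ has $r_{i} \neq 0$, else $k_{i} = n$ would contradict $\nu(h) < n$) is a clean replacement for the paper's case analysis.
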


Note that (\ref{p1}) and (\ref{p2}) in the statement of Lemma~\ref{representation1} are not mutually exclusive. When $r_1$ and $r_3$ are both non-zero, either can be used.

\begin{proof}
Suppose that $r_{1} \ne 0$ and $z \in P(h).$ We assert that $z$ satisfies (\ref{p1}). By Lemma~\ref{pForm}, we have
\[
z \sim \Bigl(a_{1}, \, a_{2}, \, a_{3} + r_{1}^{-1}\ell_{1} p^{n - 1 - k_{1}}\Bigr) \sim(a_{1}, \, a_{2}, \, a_{3})\Bigl(0, \, 0, \, r_{1}^{-1}\ell_{1} p^{n - 1 - k_{1}}\Bigr)
\]
for some $a = (a_{1}, \, a_{2}, \, a_{3}) \in C(h)$ and $0 \leq \ell_{1} < p.$ By Lemma~\ref{surjective}, $\alpha(a) = \left\{a\right\}_{h}$ in $C\Bigl(\left \{ \n h \right \}_h\Bigr).$ We check that $\left \{ \n h \right \}_h$ is non-degenerate. Since $h$ is non-central, we assume first that $r_1 \neq 0$ and
$r_{3} = 0.$ In this case, we get $k_3 = n$ and, thus, $\nu(h) = k_1.$ Hence, $\left \{ \n h \right\}_{h} = \Bigl(\left \{ r_1 \right \}_h, \, 0 , \, \left \{r_3 \right\}_h p^{k_3 - k_1} \Bigr).$ And so, $\left \{ \n h \right \}_h$ is non-degenerate. The proof is similarly straightforward whenever $r_i \neq 0$ for $i = 1, \, 3$; or $r_1 = 0$ and $r_3 \neq 0$.

By Corollary~\ref{specificPForm}, recalling that $P_{0}\Bigl(\left \{ \n h \right \}_h\Bigr) = C\Bigl(\left \{ \n h \right \}_h\Bigr),$ we obtain
\[
\left\{a \right\}_{h} \sim \left(w \{r_{1}\}_h p^{k_{1} - \nu(h)}, \, 0, \, w\{r_{3}\}_h p^{k_{3} - \nu(h)}\right) \sim \{\n{h} \}_h^{w}
\]
for some $0 \le w < p^{n - \nu(h)}$. By Lemma \ref{l:BetaEquiv}, it follows that $a\sim \n{h}^{w}\bigl(r^{\prime}_{1}, \, 0, \, r^{\prime}_{3}\bigr)^{p^{n - \nu(h)}}$ with $0 \le r_{1}^{\prime}, \, r_{3}^{\prime} < p^{\nu(h)}.$ Thus,
\begin{align*}
z & \sim a\Bigl(0, \, 0, \, r_{1}^{-1}\ell_{1} p^{n - 1 - k_{1}}\Bigr) \sim \n{h}^{w}\bigl(r^{\prime}_{1}, \, 0, \, r^{\prime}_{3}\bigr)^{p^{n - \nu(h)}}\Bigl(0, \, 0, \, r_{1}^{-1}\ell_{1} p^{n - 1 - k_{1}}\Bigr)\\
  & \sim \n{h}^{w}\Bigl(0, \, 0, \, r_{1}^{-1}\ell_{1} p^{n - 1 - k_{1}}\Bigr)\bigl(r^{\prime}_{1}, \, 0, \, r^{\prime}_{3}\bigr)^{p^{n - \nu(h)}}.
\end{align*}
This proves the assertion. A similar argument shows that if $r_{3} \ne 0$ and $z \in P(h),$ then $z$ satisfies (\ref{p2}).

Conversely, suppose that $z$ satisfies (\ref{p1}). It follows from Lemmas~\ref{producttosum} and \ref{l:EquivCentPseudo} that $[h, \, z]|_{2} = \ell_{1} p^{n - 1}.$ Thus, $z \in P(h)$. We proceed similarly if $r_{3} \ne 0$.
\end{proof}

\begin{Remark} \label{r:Centralizer}
We observe that $z \in C(h)$ if and only if $\ell_{1} = 0$ ($\ell_{2} = 0$) in Lemma~\ref{representation1}. For if $z$ commutes with $h$, then the last calculation of the proof of Lemma~\ref{representation1} gives that $\ell_{1} = 0$ ($\ell_{2} = 0$). For the converse we set $\ell_{1} = 0$ ($\ell_{2} = 0$) in (\ref{p1}) ((\ref{p2}), respectively) of the said lemma.
\end{Remark}

Our next goal is to describe certain sufficient conditions on a subgroup $H$ of $\h(p^n)$ so that $P(H)/C(H)\cong \mathbb{Z}_p^2.$ Some preparation is needed.

\begin{Definition}
A generating set $S$ for $H \le \hb{n}$ is called \emph{special} if either
\begin{enumerate}
\item $S^{\dagger} = \emptyset$; or
\item there exists $h_1 \in S^{\dagger}$ such that $\nu(S^\dagger)=\nu(h_1)$ and $h_2 \not \sim h_1^{w}$ for all $h_2 \in S^{\dagger} - \{h_1\}$ and any non-negative integer $w$.
\end{enumerate}
\end{Definition}

\begin{Lemma}
Every subgroup $H$ of $\hb{n}$ has a special generating set.
\end{Lemma}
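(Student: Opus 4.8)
The plan is to pass to the central quotient and thereby reduce the problem to a statement about finite abelian $p$-groups. Write $\pi \colon \hb{n} \to \hb{n}/Z(\hb{n})$ for the natural projection, and recall from the proof of Lemma~\ref{diagramChasing} that $\hb{n}/Z(\hb{n}) \cong \Z_{p^{n}}^{2}$, the isomorphism being induced by $\pi(h) = (h|_{1}, \, h|_{3})$. Set $\bar{H} = \pi(H)$, which is then a subgroup of $\Z_{p^{n}}^{2}$ and hence a finite abelian $p$-group of rank at most $2$. If $\bar{H}$ is trivial, then $H \le Z(\hb{n})$, so any generating set $S$ of $H$ has $S^{\dagger} = \emptyset$ and condition (1) of the definition holds; thus we may assume $\bar{H} \ne 0$.

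The first key observation is that the invariant $\nu$ factors through $\pi$ and is governed by element order. Indeed, for $h \in \hb{n}^{\dagger}$ the quantity $\nu(h) = \min\{k_{1}, \, k_{3}\}$ depends only on $h|_{1}$ and $h|_{3}$, that is, only on $\pi(h)$; moreover the order of $\pi(h) = (h|_{1}, \, h|_{3})$ in $\Z_{p^{n}}^{2}$ equals $p^{\,n - \nu(h)}$, since the order of a coordinate $h|_{i}$ in $\Z_{p^{n}}$ is $p^{\,n - v_{p}(h|_{i})}$. Consequently an element of $\bar{H}$ has \emph{minimal} $\nu$-value precisely when it has \emph{maximal} order in $\bar{H}$. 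I would then invoke the standard structural fact that in a finite abelian $p$-group an element of maximal order generates a direct summand. Choosing $\bar{h}_{1} \in \bar{H}$ of maximal order gives a decomposition $\bar{H} = \langle \bar{h}_{1} \rangle \oplus C$; since $\bar{H}$ has rank at most $2$, the complement $C$ is cyclic, say $C = \langle \bar{h}_{2} \rangle$, where $\bar{h}_{2} = 0$ exactly when $\bar{H}$ is cyclic.

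Lift $\bar{h}_{1}, \bar{h}_{2}$ to elements $h_{1}, h_{2} \in H$. Because $Z(\hb{n})$ is cyclic by Lemma~\ref{conmutador}, so is $H \cap Z(\hb{n})$; let $z_{0}$ generate it. I claim that $S = \{h_{1}, \, h_{2}, \, z_{0}\}$, with $h_{2}$ dropped when $\bar{H}$ is cyclic and $z_{0}$ dropped when $H \cap Z(\hb{n})$ is trivial, is a special generating set. It generates $H$: given $x \in H$, the element $\pi(x)$ is a product of powers of $\bar{h}_{1}$ and $\bar{h}_{2}$, so $x$ differs from the corresponding product of powers of $h_{1}, h_{2}$ by an element of $H \cap Z(\hb{n}) = \langle z_{0}\rangle$, whence $x \in \langle S\rangle$. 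It is special: here $S^{\dagger} = \{h_{1}, \, h_{2}\}$ (only $\{h_{1}\}$ in the cyclic case), since $\pi(h_{1}), \pi(h_{2}) \ne 0$ in the rank-$2$ case while $z_{0}$ is central. By construction $\pi(h_{1})$ has maximal order, equivalently $\nu(h_{1})$ is minimal among all non-central elements of $H$, so $\nu(h_{1}) = \min\{\nu(h_{1}), \nu(h_{2})\} = \nu(S^{\dagger})$, giving the first requirement of condition (2). For the second, Lemma~\ref{known} gives $\pi(h_{1}^{w}) = w\,\pi(h_{1})$, so $h_{2} \sim h_{1}^{w}$ for some non-negative integer $w$ would force $\bar{h}_{2} \in \langle \bar{h}_{1}\rangle$; but $\langle \bar{h}_{1}\rangle \cap C = 0$ and $\bar{h}_{2} \ne 0$, a contradiction. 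Hence $h_{2} \not\sim h_{1}^{w}$ for all $w$ (and the requirement is vacuous in the cyclic case), so condition (2) holds.

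The step requiring the most care is the conceptual translation at the heart of the argument: verifying that ``minimal $\nu$-value'' and ``maximal order'' coincide, and then extracting the cyclic direct summand $\langle \bar{h}_{1}\rangle$ with a \emph{cyclic} complement, so that a single additional non-central generator $h_{2}$ suffices and the distinguished element $h_{1}$ simultaneously realizes $\nu(S^{\dagger})$ and lies outside the cyclic span of no other generator. Once this is in place, the reduction to $\bar{H}$ and the bookkeeping with $H \cap Z(\hb{n})$ are routine.
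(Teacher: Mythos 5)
Your proof is correct, but it takes a genuinely different route from the paper. The paper's proof is an elementary iterative algorithm: start with any generating set, fix $h_1 \in S^{\dagger}$ realizing $\nu\left(S^{\dagger}\right)$, and whenever some other non-central generator $h_2$ satisfies $h_2 \sim h_1^{w}$, replace $h_2$ by the central element $h_1^{w}h_2^{-1}$; each swap strictly decreases $\left|S^{\dagger}\right|$ while keeping $h_1$ and the generated subgroup fixed, so the process terminates in a special set. You instead argue structurally: pass to $\hb{n}/Z(\hb{n}) \cong \Z_{p^n}^{2}$, observe that $\nu$ factors through this quotient with $o(\pi(h)) = p^{\,n-\nu(h)}$ (so minimal $\nu$ equals maximal order), invoke the standard fact that a maximal-order element of a finite abelian $p$-group generates a direct summand whose complement here is cyclic by the rank bound, lift the two generators, and adjoin a generator of the cyclic group $H \cap Z(\hb{n})$. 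All the verifications (generation, $\nu(S^{\dagger}) = \nu(h_1)$, and $h_2 \not\sim h_1^{w}$ via $\langle \bar{h}_1\rangle \cap C = 0$) are sound. What your approach buys: it yields the stronger conclusion that every subgroup admits a special generating set with $\left|S^{\dagger}\right| \le 2$ (three generators in total), and it makes the termination issue the paper waves at (``continuing in this way will eventually produce\ldots'') disappear entirely; the cost is importing an external structure-theory lemma. What the paper's approach buys: it is self-contained, and because it \emph{modifies a given generating set in place}, it meshes with how special sets are manipulated elsewhere in the paper (e.g., the replacement arguments in Lemma~\ref{super} and the construction in Lemma~\ref{inj-size}), where one needs to start from a prescribed set rather than build one from scratch.
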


\begin{proof}
Let $S$ be a generating set for $H$ containing at least one non-central element. Choose $h_1 \in S^{\dagger}$ such that $\nu \left( S^{\dagger} \right) = \nu(h_1)$. If $S^{\dagger} = \{h_1\},$ then we are done.

Suppose that there exists $h_2 \in S^{\dagger} - \{h_1\}$ satisfying $h_2 \sim h_1^{w_1}$ for some $0 < w_{1};$ that is, $h_1^{w_1}h_{2}^{-1} \in Z(\hb{n}).$ Put $h_1^{w_1}h_{2}^{-1} = c_{1}$ and let $S_{1} = \left(S-\{h_{2}\}\right)\cup \{c_{1}\}.$ Clearly, $S_{1}$ is also a generating set for $H$. If $h_3 \not \sim h_1^{w_2}$ for any $h_3 \in S_{1}^{\dagger} - \{h_1\}$ and any $w_2 \geq 0,$ then $S_{1}$ is special. Otherwise, we repeat the same procedure starting with $S_{1}$ and obtain another generating set for $H.$ Continuing in this way will eventually produce a special generating set for $H$.
\end{proof}

\begin{Definition}\label{d:Supercommuting}
Let $h_{1} \in \hb{n}^{\dagger}$ and $h_{2} \in \hb{n}.$ We say that $h_2$ is \emph{supercommuting} with $h_{1}$ if $h_2 \sim \bigl(r_1^{\prime}, \, 0, \, r_3^{\prime}\bigr)^{p^{n - \nu(h_1)}}$ with $0 \le r_1^{\prime}, \, r_3^{\prime} < p^{\nu(h_1)}$ and $r_1^{\prime} + r_3^{\prime} > 0.$
\end{Definition}

\begin{Remark}\label{r:Supercommuting}
The condition $r_1^{\prime} + r_3^{\prime} > 0$ implies that $h_{2} \in \hb{n}^{\dagger}.$ In addition, it can be shown that $h_{2}$ and $h_{1}$ commute whenever $h_{2}$ is supercommuting with $h_{1}.$
\end{Remark}

\begin{Lemma}\label{propCommute}
Let $S$ be a special generating set for $H \leq \hb{n}$ with $S^{\dagger} \neq \emptyset,$ and let $h \in S^{\dagger}$ such that $\nu(h) = \nu \left(S^{\dagger} \right).$ If $h$ is non-degenerate, then $S^{\dagger}$ contains no elements that are supercommuting with $h$.
\end{Lemma}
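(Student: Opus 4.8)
The plan is to observe that the conclusion follows almost immediately by unwinding the definition of supercommuting in the non-degenerate case; in fact the specialness of $S$ and the condition $\nu(h)=\nu(S^{\dagger})$ are not needed, and only the non-degeneracy of $h$ is used. The single observation that drives everything is that non-degeneracy forces $\nu(h)=0$.

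First I would invoke Lemma~\ref{l:PropNu} (3): since $h$ is non-degenerate, we have $\nu(h)=0.$ The key point is then to read Definition~\ref{d:Supercommuting} correctly, noting that supercommuting of an element $h_2$ \emph{with} $h$ is governed by $\nu(h)$: such an $h_2$ must satisfy $h_2 \sim \bigl(r_1^{\prime}, \, 0, \, r_3^{\prime}\bigr)^{p^{n - \nu(h)}}$ with $0 \le r_1^{\prime}, \, r_3^{\prime} < p^{\nu(h)}$ and $r_1^{\prime} + r_3^{\prime} > 0.$ Substituting $\nu(h)=0,$ the bound becomes $0 \le r_1^{\prime}, \, r_3^{\prime} < p^{0} = 1.$

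Since $r_1^{\prime}$ and $r_3^{\prime}$ are nonnegative integers strictly less than $1,$ both must equal $0,$ whence $r_1^{\prime} + r_3^{\prime} = 0.$ This contradicts the requirement $r_1^{\prime} + r_3^{\prime} > 0$ in Definition~\ref{d:Supercommuting}. Therefore no element of $\hb{n}$ at all can be supercommuting with a non-degenerate $h$; in particular $S^{\dagger}$ contains no such element, which is the assertion.

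There is genuinely no hard step here: the statement is a direct consequence of the definitions once $\nu(h)=0$ is recorded, and the positivity condition $r_1^{\prime}+r_3^{\prime}>0$ becomes vacuously unsatisfiable. The only subtlety to watch is that the exponent bound $p^{\nu(h)}$ on $r_1^{\prime}, r_3^{\prime}$ is indexed by the $\nu$ of the element being supercommuted with (namely $h$), and it is precisely the collapse $p^{\nu(h)}=1$ that kills the definition. I would flag that the remaining hypotheses are included only for the downstream use of this lemma and play no role in the proof.
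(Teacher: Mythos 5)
Your proof is correct and is essentially identical to the paper's: both reduce to $\nu(h)=0$ via Lemma~\ref{l:PropNu}~(3), so that the bound $0 \le r_1^{\prime}, \, r_3^{\prime} < p^{0} = 1$ in Definition~\ref{d:Supercommuting} forces $r_1^{\prime} = r_3^{\prime} = 0$, contradicting $r_1^{\prime} + r_3^{\prime} > 0$. Your side remark that specialness and $\nu(h) = \nu\left(S^{\dagger}\right)$ are not actually used also matches the paper, whose proof likewise invokes only non-degeneracy.
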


\begin{proof}
If $h$ is non-degenerate, then $\nu(h) = 0$ and, thus, $p^{n - \nu(h)} = p^{n}$. Assume that $\tilde{h} \in S^{\dagger}$ is supercommuting with $h,$ where $\tilde{h} \neq h.$ By definition, there exist $0 \le r_1^{\prime}, \, r_3^{\prime} < 1$ with $r_1^{\prime} + r_3^{\prime} > 0$ such that $\tilde{h} \sim \Bigl(r_1^{\prime}, \, 0, \, r_3^{\prime}\Bigr)^{p^{n}}.$ This is clearly impossible.
\end{proof}

The next notation is a slight repeat of Notation~\ref{n:Factored_h}.

\begin{Notation}\label{Nh}
Let $h_1,\ldots,h_m\in \hb{n}^\dagger$. We write $h_{j} = \bigl(r_{j1}p^{k_{j1}}, \, h_{j2}, \, r_{j3}p^{k_{j3}}\bigr)$, where either

\noindent (1) \ $r_{ji} = 0,$ in which case we set $k_{ji} = n,$ or

\vspace{.05in}

\noindent (2) \ $0 \leq k_{ji} < n,$ $0 < r_{ji} < p^{n},$ and $p \nmid r_{ji}$

\vspace{.05in}

\noindent for $i = 1, \, 3.$
\end{Notation}

\begin{Lemma}\label{super}
Let $S$ be a special generating set for $H \le \hb{n}$ with $S^{\dagger} \neq \emptyset$ such that $\nu \left(S^{\dagger} \right) = \nu(h)$ for some $h \in S^{\dagger}.$
\begin{enumerate}
\item If $h_1 \in S^{\dagger} - \{h\}$ commutes with $h,$ then
\[
h_1 \sim h^{w}\Bigl(r_1^{\prime}, \, 0, \, r_3^{\prime}\Bigr)^{p^{n - \nu(h)}}
\]
for some $0 \leq w$ and $0 \le r_1^{\prime}, \, r_3^{\prime} < p^{\nu(h_1)}$ with $r_1^{\prime} + r_3^{\prime} > 0.$ Moreover, if $r_{i}^{\prime} = r_{i}^{\prime\prime}p^{s_{i}} \neq 0$ for some $i = 1, \, 3$ with $p \nmid r_{i}^{\prime\prime}$ and $s_{i} \geq 0,$ then $2 \nu(h) \le n + s_{i}$.
\item There exists a special generating set $T$ for $H$ which satisfies the following properties:
\begin{enumerate}
\item $\nu(h) = \nu \left(T^{\dagger} \right)$;
\item All elements of $T^{\dagger}-\{h\}$ that commute with $h$ are supercommuting with $h$.
\end{enumerate}
\end{enumerate}
\end{Lemma}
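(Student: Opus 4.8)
The plan is to push everything through the homomorphism $\phi : \hb{n} \to \Z_{p^n}^2$ given by $\phi(x) = (x|_1, \, x|_3)$, whose kernel is exactly $Z(\hb{n})$ (first and third coordinates add under the group law). Thus $x \sim y$ if and only if $\phi(x) = \phi(y)$, and by Corollary~\ref{C} both membership in $C(h)$ and the relation $\sim$ become linear congruence statements in $\Z_{p^n}^2$. I fix the distinguished $h$ and write $\nu = \nu(h)$; since $h_1 \in S^\dagger$ we have $\nu(h_1) \ge \nu(S^\dagger) = \nu$, which says precisely that both coordinates of $\phi(h_1)$ are divisible by $p^{\nu}$. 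I will also use the identity $\n{h}^{\,p^{\nu}} \sim h$, immediate from Lemma~\ref{known}.

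For part (1), assume without loss of generality that the minimum $\nu = \min\{k_1, \, k_3\}$ is attained at the first coordinate, so $r_1$ is a unit and $\phi(h) = (r_1 p^{\nu}, \, r_3 p^{k_3})$. Since $h_1 \in C(h)$, Corollary~\ref{C} gives $r_1 p^{\nu}(h_1|_3) \equiv r_3 p^{k_3}(h_1|_1) \pmod{p^n}$; dividing by $p^{\nu}$ this reads $h_1|_3 \equiv r_1^{-1} r_3 p^{k_3 - \nu}(h_1|_1) \pmod{p^{n-\nu}}$. Because $p^{\nu} \mid h_1|_1$ and $r_1$ is a unit, I can solve $w r_1 p^{\nu} \equiv h_1|_1 \pmod{p^n}$ for an integer $w \ge 0$; then the first coordinate of $\phi(h_1) - w\phi(h)$ vanishes, and substituting $h_1|_1 \equiv w r_1 p^{\nu}$ into the centralizer congruence forces $h_1|_3 - w r_3 p^{k_3} \equiv 0 \pmod{p^{n-\nu}}$. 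Hence $\phi(h_1) = \phi(h^w) + (0, \, r_3' p^{n-\nu})$ with $0 \le r_3' < p^{\nu} \le p^{\nu(h_1)}$, that is $h_1 \sim h^{w}(0, \, 0, \, r_3')^{p^{n-\nu}}$, which is the asserted form (the case where the minimum is attained at the third coordinate is symmetric). Specialness of $S$ enters exactly here: since $h_1 \not\sim h^{w}$ for every $w$, the correction term is nontrivial, so $r_1' + r_3' > 0$.

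The moreover-clause is a valuation count, and I expect it to be the real crux, since it is what makes part (2) work. With the representation above, a nonzero correction coordinate contributes $r_i' p^{n-\nu} = r_i'' p^{s_i + n - \nu}$, of valuation $s_i + (n - \nu)$, whereas the corresponding coordinate of $\phi(h^w)$ has valuation $\ge k_i \ge \nu$. If we had $s_i + n - \nu < \nu$, then the sum — which is the corresponding coordinate of $\phi(h_1)$ — would have valuation $s_i + n - \nu < \nu$, contradicting $p^{\nu} \mid \phi(h_1)$. Therefore $s_i + n - \nu \ge \nu$, i.e. $2\nu \le n + s_i$. The content of this inequality is that the correction term of $h_1$ is itself at least as degenerate as $h$.

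For part (2), I would build $T$ from $S$ by one replacement per offending generator. For each $h_1 \in S^\dagger - \{h\}$ commuting with $h$, write $h_1 \sim h^{w}(r_1', \, 0, \, r_3')^{p^{n-\nu}}$ as in part (1) and replace $h_1$ by $h_1 h^{-w}$, leaving $h$ and all non-commuting generators untouched. Since $h \in T$, we have $\left<T\right> = \left<S\right> = H$, and $h_1 h^{-w} \sim (r_1', \, 0, \, r_3')^{p^{n-\nu}}$ is supercommuting with $h$ in the sense of Definition~\ref{d:Supercommuting} (coordinates $< p^{\nu}$, not both zero), giving property (b). Property (a) and specialness are then checked: if $h_1 h^{-w} \sim h^{w'}$ then $h_1 \sim h^{w + w'}$, contradicting specialness of $S$, so every element of $T^\dagger - \{h\}$ is still inequivalent to each power of $h$; and by the moreover-clause each nonzero coordinate $r_i' p^{n-\nu}$ of the new generator has valuation $s_i + n - \nu \ge \nu$, so $\nu(h_1 h^{-w}) \ge \nu$. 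Hence $h$ still attains the minimum and $\nu(T^\dagger) = \nu(h)$, so $T$ is the desired special generating set.
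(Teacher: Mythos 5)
Your proposal is correct, and while it follows the same skeleton as the paper's proof --- represent the commuting generator $h_1$ as a power of $h$ times a $p^{n-\nu(h)}$-th power, use specialness of $S$ to force the correction to be nontrivial, obtain $2\nu(h) \le n + s_i$ from a divisibility count, and prove (2) by replacing the offending generators --- the implementation differs at two genuine points. First, where the paper invokes Lemma~\ref{representation1} and Remark~\ref{r:Centralizer} to get $h_1 \sim {}_{\nu}h^{w_1}\bigl(r_1^{\prime}, \, 0, \, r_3^{\prime}\bigr)^{p^{n-\nu(h)}}$ and then matches coordinates to deduce $p^{\nu(h)} \mid w_1$ and $p^{\nu(h)} \mid r_i^{\prime}p^{n-\nu(h)}$, you solve the linear congruences directly through the projection $\phi$ modulo the center and, crucially, choose $w$ so that the coordinate where $\nu(h)$ is attained matches \emph{exactly} mod $p^{n}$. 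This concentrates the correction in the opposite coordinate and turns the ``moreover'' clause into a one-line valuation comparison; it is also a gain in rigor, since the paper's deduction that the summands $r_1 w_1$ and $r_1^{\prime}p^{n-\nu(h)}$ are \emph{separately} divisible by $p^{\nu(h)}$ from divisibility of their sum is not immediate when $2\nu(h) > n$, a delicacy your exact-matching choice of $w$ sidesteps entirely. (Your normal form has $r_1^{\prime} = 0$ or $r_3^{\prime} = 0$ automatically, which is a sharper conclusion than stated but perfectly compatible with it.) Second, in part (2) the paper replaces $h_1$ by the element $\bigl(r_1^{\prime}, \, 0, \, r_3^{\prime}\bigr)^{p^{n-\nu(h)}}$ together with an auxiliary central generator $z_1$, and then iterates the procedure; your single-pass replacement $h_1 \mapsto h_1 h^{-w}$ avoids both the extra central generator and the iteration (generation is preserved since $h \in T$ gives $h_1 = \bigl(h_1 h^{-w}\bigr)h^{w}$, and $h_1 h^{-w} \sim \bigl(r_1^{\prime}, \, 0, \, r_3^{\prime}\bigr)^{p^{n-\nu(h)}}$ because $\hb{n}$ is abelian modulo its center). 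Your closing checks --- non-centrality of the new generator, $\nu\bigl(h_1 h^{-w}\bigr) \ge \nu(h)$ via the moreover-clause, and preservation of specialness because $h_1 h^{-w} \sim h^{w^{\prime}}$ would force $h_1 \sim h^{w + w^{\prime}}$ --- are exactly the facts needed, so the argument is complete.
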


\begin{proof}
For both parts of the proof we let $h = \bigl(r_{1}p^{k_{1}}, \, h_{2}, \, r_{3}p^{k_{3}}\bigr)$ and assume that $\nu(h) = k_{1}$ (the other case is handled similarly).

By Lemma~\ref{representation1} and Remark~\ref{r:Centralizer}, we have
\[
h_1 \sim {}_{\nu} h^{w_1}\Bigl(r_1^{\prime}, \, 0, \, r_3^{\prime}\Bigr)^{p^{n - k_{1}}} \sim \Bigl(r_{1}w_1 + r_1^{\prime}p^{n - k_{1}}, \, 0, \,r_{3}w_{1}p^{{k_{3} - k_{1}}} + r_3^{\prime}p^{n - k_{1}}\Bigr)
\]
with $0 \le w_1 <p^{n - k_{1}}$ and $0 \le r_1^{\prime}, \, r_3^{\prime} < p^{{k_{1}}}.$ It follows from the comment preceding Lemma~\ref{l:EquivCentPseudo} that $p^{{k_{1}}} \mid w_1$ and $p^{k_{1}}\mid r^{\prime}_ip^{n - k_{1}}$ for $i = 1, \, 3.$ This means that $w_1 = wp^{{k_{1}}}$ for some $w \geq 0$. Hence,
\[
{}_{\nu} h^{w_1} = {}_{\nu}h^{wp^{k_{1}}} = \left({}_{\nu}h^{p^{k_{1}}}\right)^{w} \sim h^w.
\]

Now, if $r^{\prime}_1 = r^{\prime}_3 = 0,$ then $h_1 \sim {}_{\nu} h^{w_1} \sim h^w.$ This contradicts the fact that $S$ is special, and hence, $r^{\prime}_1 + r^{\prime}_3 > 0.$ Since $p^{k_{1}}\mid r^{\prime}_ip^{n - k_{1}},$ we have $p^{k_{1}}\mid r^{\prime\prime}_ip^{n - k_{1} + s_{i}}.$ Thus, $k_{1} \le n - k_{1} + s_{i};$ that is, $2k_{1} \le n + s_{i}.$ This completes the proof of (1).

To prove (2), suppose there exists $h_{1} \in S^{\dagger}$ with $h_{1} \neq h$ that commutes with $h$ but is not supercommuting with $h$. Write $h_1\sim h^{w}\Bigl(r_1^{\prime}, \, 0, \, r_3^{\prime}\Bigr)^{p^{n - k_1}}$ as in (1) with $0 < w < o(h).$
Since
\[
h^{o(h) - w}h_1 \sim h^{o(h) - w} h^w\Bigl(r_1^{\prime}, \, 0, \, r_3^{\prime}\Bigr)^{p^{n -k_1}} \sim \Bigl(r_1^{\prime}, \, 0, \, r_3^{\prime}\Bigr)^{p^{n - k_1}},
\]
there exists a central element $z_1$ such that
\[
h^{o(h) - w} h_1 = \Bigl(r_1^{\prime}, \, 0, \, r_3^{\prime}\Bigr)^{p^{n - k_1}} z_1.
\]
A new generating set $T_1$ for $H$ is obtained by replacing $h_1$ with $\Bigl(r_1^{\prime}, \, 0, \, r_3^{\prime}\Bigr)^{p^{n - k_1}}$ and $z_1$. Writing $r_1^{\prime}$ and $r_3^{\prime}$ as in (1), and using the fact that $2k_1 \leq n + s_i$  for $i = 1, \, 3$, gives that $\nu(h) = \nu \left(T_{1}^{\dagger} \right)$. If all elements of $T_{1}^{\dagger} - \{h\}$ that commute with $h$ are supercommuting with $h,$ then $T_1$ is a special generating set and we set $T = T_1.$ Otherwise, we repeat the same procedure until a special generating set $T$ with the desired properties is obtained.
\end{proof}

\begin{Remark} \label{r:Super}
By Lemma~\ref{super}, we can assume that if $S$ is any special generating set for $H$ with $S^{\dagger} \neq \emptyset$ such that $\nu({h}) = \nu \left(S^{\dagger} \right)$ for some $h \in S^{\dagger}$ and $h_{1} \in S^{\dagger} - \{h\}$ commutes with $h,$ then $h_{1}$ is supercommuting with $h.$
\end{Remark}

\begin{Notation} \label{n:Phi}
For any non-negative integer $x = rp^{k},$ where $r$ and $k$ are integers and $p \nmid r,$ we let $\varphi(x) = k$. If $r = 0,$ then we write $rp^{k}$ as $p^{n}$. Thus, if $x = 0$ then we have $\varphi(x) = n.$
\end{Notation}

We set up some data for the next definition. Let $H \leq \hb{n}$ and $S$ a special generating set for $H$ with $S^{\dagger} \neq \emptyset.$ Suppose that $h_{1}, \, h_{2} \in S^{\dagger}$ are distinct commuting elements and $\nu\left(S^\dagger \right) = \nu(h_{1}).$ By Remark~\ref{r:Super}, $h_{2}$ is supercommuting with $h_{1}$, and thus, $h_{2} \sim \bigl(r_{1}^{\prime}, \, 0, \, r_{3}^{\prime}\bigr)^{p^{n - \nu(h_{1})}}$ for some $0 \leq r_{1}^{\prime}, \, r_{3}^{\prime} < \nu(h_{1})$ with $r_{1}^{\prime} + r_{3}^{\prime} > 0.$

\begin{Definition} \label{d:Proper}
We say that $h_{2}$ \emph{commutes properly} with $h_{1}$ if
\[
\varphi\Bigl(\Bigl[\Bigl(r^{\prime}_1, \, 0, \, r_3^{\prime}\Bigr), \, {}_{\nu}h_1\Bigr] \Big |_2\Bigr) < \nu(h_{1}).
\]
Otherwise, $h_2$ \emph{commutes improperly} with $h_{1}.$
\end{Definition}

\begin{Lemma} \label{l:Properly}
Let $S$ be a special generating set for $H \leq \hb{n}$ with $S^{\dagger} \neq \emptyset.$ Suppose that $h_1 \in S^{\dagger}$ is such that $\nu \left(S^{\dagger} \right) = \nu(h_1).$ Then $S^{\dagger}$ contains no elements that commute improperly with $h_1$.
\end{Lemma}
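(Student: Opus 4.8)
The plan is to argue by contradiction, after splitting on whether $h_1$ is degenerate. If $h_1$ is non-degenerate, then $\nu(h_1)=0$ by Lemma~\ref{l:PropNu}(3), and by Lemma~\ref{propCommute} the set $S^{\dagger}$ contains no elements supercommuting with $h_1$. Since commuting improperly is a notion defined only for elements that are supercommuting with $h_1$ (and, by Remark~\ref{r:Super}, every element of $S^{\dagger}-\{h_1\}$ that commutes with $h_1$ is supercommuting with it), there is nothing to verify in this case. So I assume $h_1$ is degenerate, write $h_1=\bigl(r_1p^{k_1},\,h_{12},\,r_3p^{k_3}\bigr)$, and take, without loss of generality, $\nu(h_1)=k_1\le k_3$. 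This forces $r_1\ne 0$ (otherwise $\nu(h_1)=k_3$), so $p\nmid r_1$ and $1\le k_1<n$ by Lemma~\ref{l:PropNu}(2). Suppose toward a contradiction that some $h_2\in S^{\dagger}-\{h_1\}$ commutes improperly with $h_1$. Then $h_2$ is supercommuting with $h_1$, so $h_2\sim\bigl(r_1',\,0,\,r_3'\bigr)^{p^{\,n-k_1}}$ with $0\le r_1',r_3'<p^{k_1}$ and $r_1'+r_3'>0$.

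The goal is to produce a non-negative integer $w$ with $h_1^{\,w}\sim h_2$, which contradicts the fact that $S$ is special. By Lemma~\ref{known}, $h_1^{\,w}\sim h_2$ is equivalent to the two component congruences
\begin{align}
w\,r_1p^{k_1} &\equiv r_1'\,p^{\,n-k_1}\pmod{p^{n}},\label{planpfirst}\\
w\,r_3p^{k_3} &\equiv r_3'\,p^{\,n-k_1}\pmod{p^{n}}.\label{planpsecond}
\end{align}
I first solve \eqref{planpfirst} for $w$. A short valuation check shows $r_1'\ne 0$: by Lemma~\ref{conmutador}(1) the improper hypothesis reads $\varphi\bigl(r_1'r_3p^{k_3-k_1}-r_3'r_1\bigr)\ge k_1$, and $r_1'=0$ would force $p^{k_1}\mid r_3'r_1$, hence $r_3'=0$ (as $p\nmid r_1$ and $r_3'<p^{k_1}$), contradicting $r_1'+r_3'>0$. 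Writing $r_1'=r_1''p^{s_1}$ with $p\nmid r_1''$, the inequality $2k_1\le n+s_1$ from Lemma~\ref{super}(1) gives $s_1\ge 2k_1-n$, which is exactly the condition guaranteeing that \eqref{planpfirst} is solvable; I fix such a $w\in[0,\,p^{\,n-k_1})$.

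It remains to verify that this $w$ also satisfies \eqref{planpsecond}. Multiplying \eqref{planpfirst} by $r_3p^{k_3-k_1}$ yields $w\,r_1r_3p^{k_3}\equiv r_1'r_3\,p^{\,n-2k_1+k_3}\pmod{p^{n}}$, while multiplying the improper congruence $r_1'r_3p^{k_3-k_1}\equiv r_3'r_1\pmod{p^{k_1}}$ by $p^{\,n-k_1}$ yields $r_1'r_3\,p^{\,n-2k_1+k_3}\equiv r_1r_3'\,p^{\,n-k_1}\pmod{p^{n}}$. Chaining these and cancelling the unit $r_1$ gives precisely \eqref{planpsecond}. Hence $h_1^{\,w}\sim h_2$, contradicting that $S$ is special; this completes the case $\nu(h_1)=k_1$, and the case $\nu(h_1)=k_3$ is symmetric (interchange the first and third components and use \eqref{p2} in place of \eqref{p1}).

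The main obstacle is the bookkeeping of $p$-adic valuations and moduli in the last step: one must use $k_3\ge k_1$ so that $w\,r_3p^{k_3}$ is already determined modulo $p^{n}$ by $w\bmod p^{\,n-k_1}$, and one must invoke the bound $2k_1\le n+s_1$ of Lemma~\ref{super}(1) at exactly the point where solvability of \eqref{planpfirst} is required. Conceptually, the computation shows that ``commuting improperly'' is engineered to be exactly the divisibility condition that makes $h_2$ equivalent to a power of $h_1$ modulo the center, which a special generating set forbids.
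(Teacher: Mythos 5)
Your proof is correct, but it takes a genuinely different, more computational route than the paper's. The paper's own proof never solves congruences explicitly: it multiplies the improper-commuting condition by $p^{n - \nu(h_1)}$ (via Lemma~\ref{producttosum}(3)) to conclude that $h_2$ commutes with the non-degenerate element ${}_{\nu}h_1$, then invokes Lemma~\ref{nonDeg} to get $h_2 \sim {}_{\nu}h_1^{w}$, and finally uses $\nu(h_2) \geq \nu\left(S^{\dagger}\right) = \nu(h_1)$ to force $w = ap^{\nu(h_1) + k},$ whence $h_2 \sim \left({}_{\nu}h_1^{p^{\nu(h_1)}}\right)^{ap^{k}} \sim h_1^{ap^{k}},$ contradicting specialness. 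You instead construct the exponent $w$ by hand, solving the first-component congruence (where Lemma~\ref{super}(1)'s inequality $2k_1 \leq n + s_1$ is exactly the solvability condition, and your check that $r_1' \neq 0$ is needed to invoke it) and then propagating to the third component by multiplying the improper-commuting divisibility by $p^{n - k_1}$ and cancelling the unit $r_1$. In effect you re-prove the relevant instance of Lemma~\ref{nonDeg} directly; the paper's route packages that arithmetic into Lemma~\ref{nonDeg} and so needs no degenerate/non-degenerate case split (your non-degenerate case, which you correctly dispatch as vacuous via Lemma~\ref{propCommute}, is handled uniformly by the paper's argument), while your route makes visible that ``commutes improperly'' is precisely the divisibility condition forcing $h_2 \sim h_1^{w}$. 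One point worth making explicit in your write-up: Lemma~\ref{super}(1) asserts the bound $2\nu(h_1) \leq n + s_i$ for the $r_i'$ of \emph{some} representation $h_2 \sim h_1^{w}\bigl(r_1', \, 0, \, r_3'\bigr)^{p^{n - k_1}}$, whereas you apply it to the supercommuting representation with $w = 0$; this is justified because that representation is unique (the constraints $0 \leq r_i' < p^{k_1}$ pin down the $r_i'$) and the bound's one-line derivation --- $\nu(h_2) \geq \nu\left(S^{\dagger}\right) = k_1$ forces $n - k_1 + s_1 \geq k_1$ --- applies verbatim to those $r_i'$, but as written you are citing the lemma slightly outside its literal statement.
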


\begin{proof}
Assume, on the contrary, that there exists $h_2 \in S^{\dagger} - \{h_1\}$ that commutes improperly with $h_1$. Using the data before Definition~\ref{d:Proper} we note that $h_2 \sim \Bigl(r_1^{\prime}, \, 0, \, r_3^{\prime}\Bigr)^{p^{n - \nu(h_1)}}$ with $0 \le r_1^{\prime}, \, r_3^{\prime} < p^{\nu(h_1)}$ and $r_1^{\prime} + r_3^{\prime} > 0.$ Lemma~\ref{producttosum} (3) (with $x = z$ and $y = w$), together with a straightforward induction, gives
\begin{align} \label{e:SuperCommuting}
\Bigl[\Bigl(r_1^{\prime}, \, 0, \, r_3^{\prime}\Bigr)^{p^{n - \nu(h_1)}}, \, {}_{\nu}h_1\Bigr] \Big |_2 & = p^{n - \nu(h_1)}\Bigl[\Bigl(r_1^{\prime}, \, 0, \, r_3^{\prime}\Bigr), \, {}_{\nu}h_1 \Bigr] \Big |_2.
\end{align}
We are assuming that $h_{2}$ commutes improperly with $h_{1}$, so that (\ref{e:SuperCommuting}) is divisible by $p^n$. Hence, $h_2$ commutes with ${}_{\nu}h_1$.
Since ${}_{\nu}h_1$ is non-degenerate, $h_{2} \sim {}_{\nu}h_{1}^{w}$ by Lemma~\ref{nonDeg}, where $0 < w < p^n.$ Thus,
\[
h_2 \sim \left(w r_{11}p^{k_{11} - \nu(h_1)}, \, 0 , \, wr_{13}p^{k_{13} - \nu(h_1)} \right).
\]
Now, $\nu(h_1) \le \nu(h_2)$ because $h_2 \in S^{\dagger}$ and $\nu(S^{\dagger}) = \nu(h_1).$ This means that the exponents of $p$ in $\left(h_2 \right) |_{1}$ and $\left( h_2 \right) |_{3}$ have to be at least $\nu(h_{1}).$ And so, we must have $w = ap^{\nu(h_1) + k}$ for some $k \ge 0$ and $p\nmid a$. Hence,
\[
h_2 \sim {}_{\nu}h_1^w \sim {}_{\nu}h_1^{ap^{\nu(h_1) + k}} \sim \left({}_{\nu}h_1^{p^{\nu(h_1)}}\right)^{ap^k} \sim h_1^{ap^{k}}.
\]
This gives a contradiction because $S$ is special.
\end{proof}

\begin{Definition}
For distinct elements $h_{s}, \, h_{t} \in \hb{n},$ we define
\[
\mu(h_{s}, \, h_{t}) = \min\{k_{s3} + k_{t1}, \, k_{s1} + k_{t3}\} - \nu(h_{s}, \, h_{t}).
\]
\end{Definition}

\begin{Lemma}\label{l:MU}
If $\mu(h_{1}, \, h_{2}) \ge n$ for some $h_{1}, \, h_{2} \in \hb{n},$ then $h_{1}$ and $h_{2}$ commute.
\end{Lemma}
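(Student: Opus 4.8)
The plan is to reduce the statement to a single $p$-adic divisibility estimate. By Lemma~\ref{conmutador}~(1) the only possibly-nonzero coordinate of $[h_1, \, h_2]$ is the second, and by Corollary~\ref{C} the elements $h_1$ and $h_2$ commute if and only if
\[
h_1|_1\,h_2|_3 - h_1|_3\,h_2|_1 \equiv 0 \pmod{p^n}.
\]
So the entire content of the lemma is that the hypothesis $\mu(h_1, \, h_2) \ge n$ forces $p^n$ to divide this integer. (If either $h_1$ or $h_2$ is central the conclusion is immediate, and the estimate below will in fact handle that case uniformly, so no separate argument is needed.)

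Next I would expand the expression using the factored forms of Notation~\ref{Nh}. Writing $h_j = \bigl(r_{j1}p^{k_{j1}}, \, h_{j2}, \, r_{j3}p^{k_{j3}}\bigr)$, we obtain
\[
h_1|_1\,h_2|_3 - h_1|_3\,h_2|_1 = r_{11}r_{23}\,p^{k_{11}+k_{23}} - r_{13}r_{21}\,p^{k_{13}+k_{21}}.
\]
Each monomial on the right is divisible by $p$ raised to its displayed exponent, so the difference is divisible by $p^{e}$, where $e = \min\{k_{11}+k_{23}, \, k_{13}+k_{21}\}$. Matching indices with the definition of $\mu$ (taking $s=1$ and $t=2$, so that $k_{s1}+k_{t3}=k_{11}+k_{23}$ and $k_{s3}+k_{t1}=k_{13}+k_{21}$), this exponent is exactly $e = \min\{k_{s3}+k_{t1}, \, k_{s1}+k_{t3}\}$, i.e. the first term in the definition of $\mu(h_1, \, h_2)$.

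The final step is to feed in the hypothesis. Since every $k_{ji}$ lies in $\{0, \, \ldots, \, n\}$, Definition~\ref{d:EtaNu} gives $\nu(h_1, \, h_2) = \min\{\nu(h_1), \, \nu(h_2)\} \ge 0$, and therefore
\[
e = \mu(h_1, \, h_2) + \nu(h_1, \, h_2) \ge n + 0 = n.
\]
Hence $p^n \mid h_1|_1\,h_2|_3 - h_1|_3\,h_2|_1$, so this quantity vanishes in $\Z_{p^n}$ and $h_1, \, h_2$ commute by Corollary~\ref{C}.

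The computation is essentially immediate, so there is no substantial obstacle; the only points requiring a moment's care are bookkeeping details. First, when some $r_{ji}=0$ one must invoke the convention $k_{ji}=n$ from Notation~\ref{Nh} to see that the corresponding monomial is already a multiple of $p^n$, so that it cannot lower the valuation of the difference. Second, if the two exponents $k_{11}+k_{23}$ and $k_{13}+k_{21}$ happen to coincide there can be cancellation among the leading terms; but cancellation only \emph{increases} the $p$-adic valuation of the difference, so the lower bound $p^{e}$ still holds and the argument goes through unchanged.
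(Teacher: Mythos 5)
Your proof is correct and follows essentially the same route as the paper's: the paper sets $\lambda(h_1, \, h_2) = \min\{k_{11} + k_{23}, \, k_{21} + k_{13}\}$ (your $e$), observes $n \le \mu(h_1, \, h_2) \le \lambda(h_1, \, h_2)$ since $\nu(h_1, \, h_2) \ge 0$, and factors $p^{\lambda(h_1, \, h_2)}$ out of $[h_1, \, h_2]|_2$ to conclude it vanishes modulo $p^{n}$. Your explicit handling of the $r_{ji} = 0$ convention and of possible cancellation is sound bookkeeping that the paper leaves implicit.
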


\begin{proof}
Set $\lambda(h_1, \, h_2) = \min\{k_{11} + k_{23}, \, k_{21}+k_{13}\}$. Since $n \le \mu(h_1, \, h_2) \le \lambda(h_1, \, h_2),$ we have
\begin{align*}
[h_1, \, h_2]|_2 & = r_{11}r_{23}p^{k_{11} + k_{23}} - r_{21}r_{13}p^{k_{21} + k_{13}}\\
                 & = p^{\lambda(h_1, \, h_2)}\Bigl(r_{11}r_{23}p^{k_{11} + k_{23} - \lambda(h_1, \, h_2)} - r_{21}r_{13}p^{k_{21} + k_{13} - \lambda(h_1, \, h_2)}\Bigr) = 0.
\end{align*}
Hence, $h_{1}$ and $h_{2}$ commute.
\end{proof}

As mentioned earlier we are now prepared to prove that (under certain conditions on $H\le \h(p^{n}), P(H)/C(H)$ is isomorphic to $\mathbb{Z}_{p^2}$. This is one of our major theorems.

\begin{Theorem}\label{main1}
Let $H \le \hb{n}$ and $S$ be a special generating set for $H$ such that $S^{\dagger} = \{h_{1}, \, h_{2}\}.$ If $h_{1}$ and $h_{2}$ do not commute or $h_{2}$ commutes properly with $h_{1},$ then
\[
P(H)/C(H)\cong \mathbb{Z}_p^2.
\]
\end{Theorem}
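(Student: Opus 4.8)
The plan is to build on Lemma~\ref{diagramChasing}. Since $H \nsubseteq Z(\hb n)$ and $S^{\dagger} = \{h_1, h_2\}$, that lemma furnishes an \emph{injective} homomorphism $\hat\theta : P(H)/C(H) \to P(h_1)/C(h_1) \times P(h_2)/C(h_2) \cong \mathbb{Z}_{p}^{2}$ and identifies $P(H)/C(H)$ with $\mathbb{Z}_{p}^{k}$ for some $k \in \{1,2\}$. So it suffices to prove that $\hat\theta$ is \emph{surjective}, that is, $k = 2$. Under the isomorphisms of Theorem~\ref{lo2} (in which $P_{\ell}(h_i)$ corresponds to $\ell \in \mathbb{Z}_{p}$), the class $\hat\theta(\overline g)$ of $g \in P(H)$ is the pair $(\ell_1, \ell_2)$ determined by $[h_i, g]|_2 = \ell_i p^{n-1}$ for $i = 1, 2$.

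By Lemma~\ref{conmutador}, $[h_i, g]|_2 = h_i|_1\, g|_3 - h_i|_3\, g|_1$ in $\Z_{p^n}$, so $\hat\theta(\overline g)$ depends only on $(g|_1, g|_3)$. I would therefore study the $\Z_{p^n}$-linear map
\[
L : \Z_{p^n}^{2} \to \Z_{p^n}^{2}, \qquad (x, y) \mapsto \bigl(h_1|_1\, y - h_1|_3\, x,\; h_2|_1\, y - h_2|_3\, x\bigr),
\]
whose matrix has determinant $h_1|_1 h_2|_3 - h_1|_3 h_2|_1 = [h_1, h_2]|_2$. An element $g$ lies in $P(H) = P(h_1)\cap P(h_2)$ precisely when $L(g|_1, g|_3) \in V := (p^{n-1}\Z_{p^n})^{2}$, and then $\hat\theta(\overline g)$ is the image of $L(g|_1, g|_3)$ under division by $p^{n-1}$. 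Hence $\hat\theta$ is surjective if and only if $V \subseteq \operatorname{im} L$; equivalently, since $\operatorname{coker} L$ is a $\Z_{p^n}$-module, if and only if $p^{n-1}\cdot\operatorname{coker} L = 0$, i.e. the larger invariant factor of $L$ is strictly smaller than $p^{n}$.

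If $h_1$ and $h_2$ do not commute, then $\det L = [h_1,h_2]|_2 \neq 0$ in $\Z_{p^n}$; write $\det L = u p^{e}$ with $u$ a unit and $0 \le e \le n-1$. The adjugate identity $L^{\mathrm{adj}} L = (\det L)\,I$ gives $p^{e} v = L\bigl(u^{-1}L^{\mathrm{adj}} v\bigr) \in \operatorname{im} L$ for every $v$, so $p^{e}\Z_{p^n}^{2} \subseteq \operatorname{im} L$. Since $e \le n-1$, we get $V = p^{n-1}\Z_{p^n}^{2} \subseteq p^{e}\Z_{p^n}^{2} \subseteq \operatorname{im} L$, whence $\hat\theta$ is onto and $k = 2$.

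The remaining, and hardest, case is when $h_2$ commutes \emph{properly} with $h_1$; here $\det L = 0$, so the adjugate argument is vacuous and the properness hypothesis must be invoked. I may assume $\nu(h_1) = \nu(S^{\dagger}) = k_{11}$ (the case $\nu(h_1) = k_{13}$ being symmetric); note $h_1$ is then degenerate, since otherwise Lemma~\ref{propCommute} would forbid any supercommuting element and place us in the previous case. By Remark~\ref{r:Super}, $h_2 \sim (r_1', 0, r_3')^{p^{\,n-\nu(h_1)}}$ with $0 \le r_1', r_3' < p^{\nu(h_1)}$, so modulo $p^{n}$ one has $h_2|_1 \equiv r_1'p^{\,n-k_{11}}$ and $h_2|_3 \equiv r_3'p^{\,n-k_{11}}$. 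The decisive computation is the determinant of a fixed integer lift $\widetilde L$ of $L$: a direct factoring gives
\[
\det \widetilde L = p^{n}\bigl(r_{11}r_3' - r_{13}r_1'\,p^{\,k_{13}-k_{11}}\bigr),
\]
and the parenthesized factor equals $\pm\bigl[(r_1',0,r_3'),\,\n{h_1}\bigr]\big|_2$. The properness condition $\varphi\bigl(\bigl[(r_1',0,r_3'),\,\n{h_1}\bigr]\big|_2\bigr) < \nu(h_1)$ says exactly that this factor has $p$-valuation $< k_{11}$, so $v_p(\det\widetilde L) < n + k_{11}$. Since the smaller invariant factor of $L$ is the minimal valuation among its entries, which one checks equals $k_{11}$ (using $\nu(h_2) \ge \nu(h_1)$, equivalently the inequality $2\nu(h_1) \le n + s_i$ of Lemma~\ref{super}), the larger invariant factor has exponent $v_p(\det\widetilde L) - k_{11} < n$. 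Thus $p^{n-1}\cdot\operatorname{coker} L = 0$, so $V \subseteq \operatorname{im} L$ and $\hat\theta$ is onto, giving $k = 2$. The main obstacle is precisely this last step: converting the valuation inequality hidden in ``commutes properly'' into the statement that $\operatorname{im} L$ contains the socle $V$, together with the careful $p$-valuation bookkeeping needed to pin down the invariant factors.
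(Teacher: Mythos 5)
Your proof is correct, but it takes a genuinely different route from the paper's. The paper establishes $k = 2$ in Lemma~\ref{diagramChasing} \emph{constructively}: it builds explicit elements $z_{i} = \n{h_{1}}^{w_{i}}\bigl(0, \, 0, \, (2 - i)r_{11}^{-1}p^{n - k_{11} - 1}\bigr)$, with exponents $w_{i}$ manufactured from the units $r_{2}$ or $\widehat{r}_{2}$ attached to $\mu(h_{1}, \, h_{2})$, and verifies by direct commutator calculations --- split into the non-commuting case and three supercommuting subcases --- that $z_{1} \in (P(h_{1}) - C(h_{1})) \cap C(h_{2})$ and $z_{2} \in (P(h_{2}) - C(h_{2})) \cap C(h_{1})$. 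You instead reduce surjectivity of $\hat{\theta}$ to the module condition $p^{n-1}\Z_{p^{n}}^{2} \subseteq \operatorname{im} L$ and settle it by linear algebra over $\Z_{p^{n}}$: the adjugate identity when $\det L = [h_{1}, \, h_{2}]|_{2} \neq 0$, and Smith normal form with valuation bookkeeping in the properly-commuting case. I checked the key steps and they hold: $\det \widetilde{L} = p^{n}\bigl(r_{11}r_{3}^{\prime} - r_{13}r_{1}^{\prime}p^{k_{13} - k_{11}}\bigr)$ is right; the parenthesized factor is indeed $\pm\bigl[\bigl(r_{1}^{\prime}, \, 0, \, r_{3}^{\prime}\bigr), \, \n{h_{1}}\bigr]\big|_{2}$, so properness is exactly the bound $v_{p}\bigl(\det \widetilde{L}\bigr) < n + k_{11}$; the minimum entry valuation equals $k_{11}$ because $k_{13} \geq k_{11}$ and $2k_{11} \leq n + s_{i}$ from Lemma~\ref{super}~(1); and transferring valuations between an integer lift and residues mod $p^{n}$ is harmless since every valuation involved is less than $n$ (using $\nu(h_{1}) < n$). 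What each approach buys: yours is shorter and unifies the paper's case analysis, since both hypotheses of the theorem collapse into a single statement about the invariant factors of $L$ (non-commuting gives $v_{p}(\det L) \leq n - 1$ directly, while ``commutes properly'' supplies the analogous bound through the degenerate determinant of the lift). The paper's construction, however, is not a luxury: the proof of Theorem~\ref{main} reuses the very elements $z_{1}, \, z_{2}$ from Theorem~\ref{main1} and shows they lie in $P(h_{j})$ for every $h_{j} \in S^{\dagger}$, so the explicit witnesses do downstream work that your argument, as written, does not provide. One could recover comparable witnesses from your setup by pulling back $\bigl(p^{n-1}, \, 0\bigr)$ and $\bigl(0, \, p^{n-1}\bigr)$ through $L$ (via the adjugate or the change-of-basis matrices of the Smith form), but that extraction would still have to be carried out before your method could replace the paper's in the proof of Theorem~\ref{main}.
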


\begin{proof}
We construct elements $z_{1}, \, z_{2}$ which satisfy the following two conditions:
\begin{enumerate}
\item $z_{1} \in P(h_{1}) - C(h_{1})$ and $z_{1} \in C(h_{2})$;
\item $z_{2} \in P(h_{2}) - C(h_{2})$ and $z_{2} \in C(h_{1})$.
\end{enumerate}
Since $C(h_{1}) \subseteq P(h_{1})$ and $C(h_{2}) \subseteq P(h_{2}),$ Lemmas~\ref{P-properties} (5, 9) and \ref{l:P(H)=P(SN)} give
\[
z_{1}, \, z_{2} \in P(h_{1}) \cap P(h_{2}) = P(\{ h_1, \, h_2 \}) = P\left(S^{\dagger}\right) = P(H).
\]
The theorem will follow from Lemma~\ref{diagramChasing} by showing that $k = 2$ in the said lemma. For if $P(H)/C(H) \cong \mathbb Z_p$, then $z_{1}^{\ell}z_{2}^{-1} \in C(H)$ for some integer $\ell$. Thus, in particular, $z_{1}^{\ell}z_{2}^{-1}h_{2} = h_{2}z_{1}^{\ell}z_{2}^{-1} = z_{1}^{\ell}h_{2} z_{2}^{-1}$ since $z_{1} \in C(h_{2})$. But this gives $z_{2} \in C(h_{2})$, a contradiction. The rest of the proof consists on the construction of $z_{1}$ and $z_{2}$.

We adopt the notation defined in (\ref{Nh}). To simplify notation, we let $\mu(2) = \mu(h_1, \, h_2)$. Throughout the proof we assume that $\nu(h_{1}, \, h_{2}) = \nu(h_{1}) = k_{11}$, and thus, $r_{11} \neq 0$. The case where $\nu(h_{1}) = k_{13}$ will follow similarly.
In light of Lemma~\ref{l:Properly}, and given that $S$ is special, there are two cases to consider: when $h_{1}$ and $h_{2}$ do not commute at all, and when $h_{2}$ commutes properly with $h_{1}.$ We handle, in turn, each of these cases.

\vspace{.15in}

\noindent \textbf{\underline{CASE I}: $h_1$ and $h_2$ do not commute}

Put
\[
r_{2} = r_{13}r_{21}p^{k_{13} + k_{21} - k_{11} - \mu(2)} - r_{11}r_{23}p^{k_{23} - \mu(2)}.
\]
Notice that $r_{2}$ is a unit in $\mathbb Z_{p^n}$ by definition of $\mu(2)$. For $i = 1, \, 2$, let
\begin{equation} \label{zi}
z_{i} = {}_{\nu}h^{w_{i}}_{1}\left(0, \, 0, \, (2 - i)r_{11}^{-1} p^{n - k_{11} - 1}\right), \hbox{ \ \ where }
\end{equation}
\begin{equation}\label{w0}
w_{i} = r_{2}^{-1} \bigl[(i - 1) - (2 - i)r_{11}^{-1}r_{21}p^{k_{21} - k_{11}}\bigr]p^{n - 1-\mu(2)}.
\end{equation}
Since $h_1$ and $h_2$ do not commute, we have $\mu(2) \le n - 1$ by Lemma~\ref{l:MU}. This, together with the fact that $r_{2}$ is a unit, gives that the $z_{i}$ are well-defined. Lemma~\ref{representation1} and Remark~\ref{r:Centralizer} immediately give that $z_{1} \in P(h_1) - C(h_{1})$ and $z_2 \in C(h_1)$.

We claim that $z_{2} \in P(h_{2}) - C(h_{2})$ and $z_{1} \in C(h_{2}).$ Invoking Lemma~\ref{producttosum}, we have:

\noindent $\bigl[h_{2}, \, z_{i}\bigr] \bigl |_{2} = \bigl[h_{2}, \, {}_{\nu}h^{w_{i}}_{1}\bigl(0, \, 0, \, (2 - i)r^{-1}_{11}p^{n - k_{11} - 1}\bigr)\bigr]\bigl |_{2}$

\vspace{.05in}

\noindent $ \ \ \ \ \ \ \ = \bigl[h_{2}, \, {}_{\nu}h^{w_{i}}_{1}\bigr] \bigl |_{2} + \bigl[h_{2}, \, \bigl(0, \, 0, \, (2 - i)r^{-1}_{11}p^{n - k_{11} - 1}\bigr)\bigr] \bigl |_{2}$

\vspace{.05in}

\noindent $ \ \ \ \ \ \ \ = \bigl(w_{i}\bigl(r_{21}r_{13}p^{k_{13} + k_{21} - k_{11}} - r_{11}r_{23}p^{k_{23}}\bigr) + (2 - i)r_{11}^{-1}r_{21}p^{n + k_{21} - k_{11} - 1}\bigr) \hbox{ mod } p^{n}$

\vspace{.05in}

\noindent $ \ \ \ \ \ \ \ = \bigl(w_{i}r_{2}p^{\mu(2)} + (2 - i)r_{11}^{-1}r_{21}p^{n + k_{21} - k_{11} - 1}\bigr) \hbox{ mod } p^{n}.$

\vspace{.05in}

\noindent Substituting the value of $w_i$ from (\ref{w0}) and simplifying the last expression gives $\left[ h_{2}, \, z_{i} \right] = (i - 1)p^{n - 1}.$ This proves the claim, thereby completing \textbf{\underline{CASE I}}.

\vspace{.15in}

\noindent \textbf{\underline{CASE II}: $h_2$ commutes properly with $h_1$}

Since $h_1$ and $h_2$ are commuting elements of $S^{\dagger}$ and $\nu(S^{\dagger}) = \nu(h_1) = k_{11}$, we remind the reader that $h_2$ is supercommuting with $h_1$ by Remark \ref{r:Super}. Thus, $h_2 \sim \Bigl(r_{21}^{\prime}, \, 0, \, r_{23}^{\prime}\Bigr)^{p^{n - k_{11}}}$ for some $0 \le r_{21}^{\prime}, \, r_{23}^{\prime} < k_{11}$ with $r_{21}^{\prime} + r_{23}^{\prime} > 0.$ We will show that $\mu(2) \leq n - 1$, as we will need this to define the $z_{i}$. We consider three distinct situations.
\begin{itemize}
\item For $j = 1, \, 3$, assume first that $r_{2j}^{\prime} \neq  0$, and write $r_{2j}^{\prime} = r_{2j}^{\prime \prime}p^{k^{(2)}_{j}} \ne 0$ where $k^{(2)}_{j} \geq 0$ and $p \nmid r_{2j}^{\prime \prime}.$ Then $2k_{11} \le n + k^{(2)}_{1}$ by Lemma~\ref{super}, and thus, $0 \le n - 2k_{11} + k^{(2)}_{1}.$ By Lemma~\ref{super}, $p^{k^{(2)}_3} \leq r_{23}^{\prime}<p^{k_{11}}$. This implies that $n + k^{(2)}_3 - k_{11} \leq n - 1$. By the statement before Lemma~\ref{l:EquivCentPseudo}, we have $\mu(2) = \mu \left (h_{1}, \, \Bigl(r_{21}^{\prime}, \, 0, \, r_{23}^{\prime}\Bigr)^{p^{n - k_{11}}} \right )$. Hence,
\begin{equation} \label{e:MuFirst}
\mu(2) = \min \left\{n + k_{13} + k^{(2)}_{1} - k_{11}, \, n + k^{(2)}_3 \right\} - k_{11} \leq n + k^{(2)}_3 - k_{11}\leq n - 1 .
\end{equation}
\item Assume next that $r_{21}^{\prime} = 0$ but $r_{23}^{\prime} \neq 0$. Write $r_{23}^{\prime} = r_{23}^{\prime \prime}p^{k^{(2)}_{3}} \ne 0$ where $k^{(2)}_{3} \geq 0$ and $p \nmid r_{23}^{\prime \prime}.$ Using the standard form for writing elements in $\h(p^n)$, together with the statement before Lemma~\ref{l:EquivCentPseudo}, we obtain
\[
\ \ \ \ \ \ \mu(2) = \mu \left(h_{1}, \, \Bigl(0, \, 0, \, r_{23}^{\prime \prime}p^{k^{(2)}_{3}}\Bigr)^{p^{n - k_{11}}} \right) = \min\left\{n + k_{13}, \, n + k^{(2)}_{3}\right\} - k_{11}.
\]
Since $r^{\prime}_{23}\ne 0$, $n + k^{(2)}_3 - k_{11} < n$. Furthermore, $\nu(h_1, \, h_2)=\nu(h_1)=k_{11}$ implies that $k^{(2)}_{3}<k_{11} \leq k_{13}$.
Therefore,
\begin{equation} \label{e:MuSecond}
\mu(2) = n + k^{(2)}_3 - k_{11} \leq n - 1.
\end{equation}
\item Finally, consider the case when  $r_{23}^{\prime}=0$, but $r_{21}^{\prime} = r_{21}^{\prime \prime} p^{k^{(2)}_1} \neq 0$ where $k^{(2)}_{1} \geq 0$ and $p \nmid r_{21}^{\prime \prime}.$ We compute as before:
\begin{eqnarray*}
\mu(2) & = & \mu \left(h_{1}, \, \Bigl(r_{21}^{\prime \prime}p^{k^{(2)}_{1}}, \, 0, \, 0\Bigr)^{p^{n - k_{11}}} \right)\\
       & = & \min\left\{k_{13} + k^{(2)}_{1} + n - k_{11}, \, k_{11} + n\right\} - k_{11}.
\end{eqnarray*}
Since $h_{2}$ commutes properly with $h_{1}$,
\begin{eqnarray*}
\varphi \left(\left[\left(r_{21}^{\prime \prime}p^{k^{(2)}_1}, \, 0 , \, 0\right), \, {}_{\nu}h_{1} \right] \Big |_2 \right) & = & \varphi\left(r_{21}^{\prime \prime}r_{13} p^{k^{(2)}_{1} + k_{13} - k_{11}}\right)\\
 & = & k^{(2)}_{1} + k_{13} - k_{11} < k_{11}
\end{eqnarray*}
(see Notation \ref{n:Phi}). We conclude that
\begin{equation} \label{e:MuThird}
\mu(2) = n + k_{13} + k^{(2)}_{1} - 2k_{11} < k_{11} + n - k_{11} = n,
\end{equation}
proving our claim that $\mu(2) \leq n-1$ in all situations.
\end{itemize}
Using the expression for $\mu(2)$ from (\ref{e:MuFirst}),
we define:
\[
\widehat{r}_{2}=
\begin{cases}
r_{13}r_{21}^{\prime\prime}p^{n - 2k_{11} + k_{13} + k^{(2)}_{1} - \mu(2)} - r_{11}r_{23}^{\prime\prime}p^{n - k_{11} + k^{(2)}_{3} - \mu(2)} & \text{ if } \, r_{21}^{\prime} \ne 0, \, r_{23}^{\prime} \ne 0;\\
-r_{11}r_{23}^{\prime\prime} & \text{ if } \,  r_{21}^{\prime} = 0, \, r_{23}^{\prime} \ne 0;\\
r_{13}r_{21}^{\prime\prime} & \text{ if } \,  r_{23}^{\prime}= 0, \, r_{21}^{\prime}\ne 0.
\end{cases}
\]
Notice that $\widehat{r}_{2}$ is also a unit in $\mathbb Z_{p^n}$ by definition of $\mu(2)$. Once again, we will use this unit to construct $z_{1}$ and $z_{2}$.

We define $z_{i}$ for $i = 1, \, 2$ as in \textbf{\underline{CASE I}} (see (\ref{zi})). But this time,
\begin{equation} \label{w02}
w_{i} =\widehat{r}_{2}^{\, -1} \Bigl[(i - 1) - (2 - i)r_{21}^{\prime}r_{11}^{-1}p^{n - 2k_{11}}\Bigr]p^{n - 1-\mu(2)}.
\end{equation}

Once again, Lemma \ref{representation1} and Remark \ref{r:Centralizer} imply that $z_{1} \in P(h_1) - C(h_1)$ and $z_2 \in C(h_1)$. The remainder of the proof consists of showing that $z_{2} \in P(h_2) - C(h_2)$ and $z_1 \in C(h_2)$.

In the following calculation we invoke Lemmas~\ref{producttosum} and \ref{l:EquivCentPseudo}. We also use the explicit descriptions of $\mu(2)$ given by (\ref{e:MuFirst}), (\ref{e:MuSecond}), and (\ref{e:MuThird}), along with the equation:
\begin{equation} \label{e:HatMu}
\widehat{r}_{2}p^{\mu(2)} = p^{n - k_{11}}r_{21}^{\prime}r_{13}p^{k_{13} - k_{11}} - p^{n - k_{11}}r_{23}^{\prime}r_{11}.
\end{equation}
This equation is easily obtained from said descriptions of $\mu(2)$. For $i = 1, \, 2,$
we have:   \\
\noindent $\bigl[h_2, \, z_i\bigr] \bigl |_2 = \Bigl[\Bigl(r_{21}^{\prime}, \, 0, \, r_{23}^{\prime}\Bigr)^{p^{n - k_{11}}}, \, {}_{\nu}h_1^{w_i}\bigl(0, \, 0, \, (2 - i)r_{11}^{-1}p^{n - k_{11} - 1}\bigr)\Bigr] \Bigl |_2$

\vspace{.075in}

\noindent $= \Bigl[\Bigl(r_{21}^{\prime}, \, 0, \, r_{23}^{\prime}\Bigr)^{p^{n - k_{11}}}, \, {}_{\nu}h_1^{w_i}\Bigr] \Bigl |_2 + \Bigl[\Bigl(r_{21}^{\prime}, \, 0, \, r_{23}^{\prime}\Bigr)^{p^{n - k_{11}}}, \, \bigl(0, \, 0, \, (2 - i)r_{11}^{-1}p^{n - k_{11} - 1}\bigr)\Bigr] \Bigl |_2$

\vspace{.075in}

\noindent $= w_i\Bigl[\Bigl(r_{21}^{\prime}, \, 0, \, r_{23}^{\prime}\Bigr)^{p^{n - k_{11}}}, \, {}_{\nu}h_1\Bigr] \Bigl |_2 + (2 - i)p^{n-k_{11}}\Bigl[\Bigl(r_{21}^{\prime}, \, 0, \, r_{23}^{\prime}\Bigr), \, \bigl(0, \, 0, \, r_{11}^{-1}p^{n - k_{11} - 1}\bigr)\Bigr] \Bigl |_2$

\vspace{.075in}

\noindent $= w_i\widehat{r}_{2}p^{\mu(2)}+ (2 - i)r_{21}^{\prime}r_{11}^{-1}p^{2(n - k_{11}) - 1}$ \, \,    (by (\ref{e:HatMu}))

\vspace{.075in}

\noindent $= \widehat{r}_{2}^{\, -1} \Bigl[(i - 1) - (2 - i)r_{21}^{\prime}r_{11}^{-1}p^{n - 2k_{11}}\Bigr]p^{n - 1-\mu(2)}\widehat{r}_{2}p^{\mu(2)}+ (2 - i)r_{21}^{\prime}r_{11}^{-1}p^{2(n - k_{11}) - 1}$

\vspace{.075in}

\noindent $=\Bigl[(i - 1) - (2 - i)r_{21}^{\prime}r_{11}^{-1}p^{n - 2k_{11}}\Bigr]p^{n - 1}+ (2 - i)r_{21}^{\prime}r_{11}^{-1}p^{2(n - k_{11}) - 1}$

\vspace{.075in}

\noindent $= (i - 1)p^{n - 1} - (2 - i)r_{21}^{\prime}r_{11}^{-1}p^{2(n - k_{11})-1}+ (2 - i)r_{21}^{\prime}r_{11}^{-1}p^{2(n - k_{11}) - 1}$

\vspace{.075in}

\noindent $= (i - 1)p^{n - 1}.$

\vspace{.075in}

\noindent This immediately gives that $z_{2} \in P(h_2) - C(h_2)$ and $z_1 \in C(h_2)$. The proof of the theorem is now complete.
\end{proof}

\begin{Definition}\label{d:InjectiveSet1}
Let $S$ be a special generating set for a subgroup $H$ of $\hb{n}.$ A set $I$ is termed an \emph{injective set} for $S$ if it is a subset of $S^{\dagger}$ of maximal cardinality satisfying the following conditions:
\begin{enumerate}
\item $|I| \le 2$;
\item If $\left|S^{\dagger}\right| > 0,$ then $\nu(I) = \nu\left(S^{\dagger}\right) = \nu(h_1)$ for some $h_{1} \in S^{\dagger}$;
\item If $I = \{h_1, \, h_2\},$ then $\mu(h_1, \, h_2) \le \mu(h_1, \, h_j)$ for all $h_j \in S^{\dagger} - \{h_1\}$.
\end{enumerate}
\end{Definition}

\begin{Lemma}\label{l:ContainsInejectiveSet}
Every special generating set for a subgroup of $\hb{n}$ contains an injective set.
\end{Lemma}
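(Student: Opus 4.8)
The plan is to construct an injective set explicitly, splitting into cases according to the size of $S^{\dagger}$. In each case I would exhibit a valid subset (one satisfying conditions (1)--(3) of Definition~\ref{d:InjectiveSet1}) whose cardinality is the largest possible, namely $\min\{|S^{\dagger}|,\, 2\}$, and then observe that condition (1) forbids any valid subset of larger cardinality. Since the cardinalities of valid subsets are bounded, exhibiting one of this size shows it is of maximal cardinality and hence is an injective set.

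First I would dispose of the trivial case $S^{\dagger} = \emptyset$. Here $I = \emptyset$ is the only subset of $S^{\dagger}$, and it satisfies all three conditions: condition (1) is clear, condition (2) is void because $|S^{\dagger}| = 0$, and condition (3) is void because $I$ is not a two-element set. Thus $I = \emptyset$ is an injective set. Next, assume $S^{\dagger} \neq \emptyset$. Because $S$ is special, the second clause of the definition of a special generating set supplies an element $h_{1} \in S^{\dagger}$ with $\nu(h_{1}) = \nu(S^{\dagger})$. The key observation is that since $\nu(S^{\dagger})$ is by definition the minimum of $\nu$ over $S^{\dagger}$, we have $\nu(h) \ge \nu(h_{1})$ for every $h \in S^{\dagger}$; hence \emph{any} subset $I \subseteq S^{\dagger}$ containing $h_{1}$ automatically satisfies $\nu(I) = \min_{h \in I}\nu(h) = \nu(h_{1}) = \nu(S^{\dagger})$, which is exactly condition (2). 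In particular, if $S^{\dagger} = \{h_{1}\}$ is a singleton, then $I = \{h_{1}\}$ satisfies (1) and (2) (and (3) vacuously), and no two-element subset of $S^{\dagger}$ exists, so $I$ has maximal cardinality and is injective.

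Finally, if $|S^{\dagger}| \ge 2$, I would choose $h_{2} \in S^{\dagger} - \{h_{1}\}$ so that $\mu(h_{1},\, h_{2})$ is minimal among the finite set $\{\mu(h_{1},\, h_{j}) : h_{j} \in S^{\dagger} - \{h_{1}\}\}$; such a minimizer exists because $S^{\dagger}$ is finite. Setting $I = \{h_{1},\, h_{2}\}$, condition (1) holds since $|I| = 2$, condition (2) holds because $h_{1} \in I$ realizes $\nu(S^{\dagger})$ as noted above, and condition (3) holds precisely by the minimizing choice of $h_{2}$. As condition (1) caps the cardinality of any valid subset at $2$, this $I$ is of maximal cardinality, completing the argument.

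The proof is a direct verification rather than a deep one, so there is no single hard obstacle; the only point requiring care is the coordination of conditions (2) and (3). One must always retain, as the distinguished element $h_{1}$, an element realizing the minimal $\nu$-value (whose existence is exactly what specialness guarantees), and only then optimize the second element against $\mu(h_{1},\, \cdot)$. The thing to confirm is that these two requirements never conflict: fixing $h_{1}$ to have minimal $\nu$ places no constraint on the second element beyond condition (3), and condition (3) is an unconstrained minimization over the remaining elements, so a second element meeting it always exists whenever $|S^{\dagger}| \ge 2$.
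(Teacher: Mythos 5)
Your proposal is correct and follows essentially the same route as the paper's proof: split into the cases $|S^{\dagger}| = 0$, $|S^{\dagger}| = 1$, and $|S^{\dagger}| \ge 2$, pick $h_{1} \in S^{\dagger}$ realizing $\nu\left(S^{\dagger}\right)$, and then choose $h_{2}$ minimizing $\mu(h_{1}, \, \cdot)$ over $S^{\dagger} - \{h_{1}\}$. Your added verifications --- that any subset containing $h_{1}$ automatically satisfies condition (2), and that condition (1) caps the cardinality at $2$ so your exhibited set is of maximal cardinality --- are correct fine points that the paper leaves implicit.
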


\begin{proof}
Let $S$ be a special generating set for $H \leq \hb{n}.$ If $H$ is central, then $I = \emptyset$ is an injective set for $S.$ If $\left|S^{\dagger}\right| = 1,$ then $I = S^{\dagger}$ is an injective set for $S.$ Finally, suppose $\left|S^{\dagger}\right| > 1$ and let $h_{1} \in S^{\dagger}$ be such that $\nu\left(S^{\dagger}\right) = \nu(h_{1}).$ Choose $h_2 \in S^{\dagger} - \{h_1\}$ such that $\mu(h_1, \, h_2) \leq \mu(h_1, \, h_j)$ for all $h_j \in S^{\dagger} - \{h_1\}$. Then $I = \{h_1, \, h_2\}$ is an injective set for $S$.
\end{proof}

\begin{Lemma}\label{emptyI}
Suppose that $H \le \hb{n}$ and $I$ is an injective set for any special generating set for $H$. Then $H$ is central if and only if $I = \emptyset.$
\end{Lemma}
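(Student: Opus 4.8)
The plan is to prove both implications directly, leaning on two features of Definition~\ref{d:InjectiveSet1}: an injective set $I$ for a special generating set $S$ is (a) a subset of $S^{\dagger}$, and (b) of \emph{maximal} cardinality among subsets of $S^{\dagger}$ meeting the three listed conditions.

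First I would dispatch the forward direction. Suppose $H$ is central, so $H \le Z(\hb{n})$, and let $S$ be the special generating set in question. Since a generating set for $H$ consists of elements of $H$, we have $S \subseteq H \subseteq Z(\hb{n})$, and therefore $S^{\dagger} = S - Z(\hb{n}) = \emptyset$. Because $I \subseteq S^{\dagger}$, this forces $I = \emptyset$.

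For the converse I would argue by contrapositive, assuming $H$ is not central and deducing $I \neq \emptyset$. If the special generating set $S$ satisfied $S \subseteq Z(\hb{n})$, then $H = \langle S \rangle \le Z(\hb{n})$ since the center is a subgroup, contradicting non-centrality; hence $S^{\dagger} \neq \emptyset$. Now pick $h_1 \in S^{\dagger}$ with $\nu(h_1) = \nu\left(S^{\dagger}\right)$, which exists because $\nu\left(S^{\dagger}\right)$ is attained as a minimum over $S^{\dagger}$. The singleton $\{h_1\}$ then satisfies all three conditions of Definition~\ref{d:InjectiveSet1}: condition (1) holds as $|\{h_1\}| = 1 \le 2$; condition (2) holds as $\nu(\{h_1\}) = \nu(h_1) = \nu\left(S^{\dagger}\right)$; and condition (3) holds vacuously. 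Thus a subset of $S^{\dagger}$ of cardinality $1$ satisfying the conditions exists, so the maximal such cardinality---namely $|I|$---is at least $1$, giving $I \neq \emptyset$.

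The whole argument is essentially bookkeeping with the definitions, and the one point that genuinely requires care is the role of maximality in the converse. Since $I$ is only stipulated to have maximal cardinality and there may be several competing injective sets, I cannot simply point at a chosen $I$; instead I must exhibit one admissible nonempty candidate, which then forces every injective set to be nonempty. The singleton $\{h_1\}$ serves exactly this purpose. Alternatively, I could invoke Lemma~\ref{l:ContainsInejectiveSet}, whose explicit construction already yields a nonempty injective set whenever $S^{\dagger} \neq \emptyset$, and then use that all injective sets share the common maximal cardinality.
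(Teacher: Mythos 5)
Your proof is correct and follows essentially the same route as the paper: centrality forces $S^{\dagger} = \emptyset$ and hence $I = \emptyset$, while non-centrality yields some $h_{1} \in S^{\dagger}$ with $\nu(h_{1}) = \nu\left(S^{\dagger}\right)$, which rules out $I = \emptyset$. If anything, your explicit use of maximality via the admissible candidate $\{h_{1}\}$ is slightly more careful than the paper's bare assertion that $h_{1} \in I$, since $I$ need not contain that particular element.
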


\begin{proof}
Let $S$ be any special generating set for $H$. If $H$ is central, then $I = \emptyset$ by the same argument given in the proof of Lemma \ref{l:ContainsInejectiveSet}.

Suppose now that $I = \emptyset$ and $H$ is not central. Then $S^{\dagger}$ contains at least one element, say $h_{1}$. Without loss of generality, we can assume $\nu\left(S^{\dagger}\right) =\nu(h_{1})$. Thus, $h_{1} \in I$ which contradicts the fact that $I$ is empty. Therefore, $H$ is central.
\end{proof}

\begin{Lemma}\label{Isize}
Let $H \le \h(p^{n})$ and $I$ an injective set for a special generating set $S$ for $H.$
\begin{enumerate}
\item If $|S^{\dagger}| = 0,$ then $|I| = 0.$
\item If $|S^{\dagger}| = 1,$ then $|I| = 1.$
\item If $|S^{\dagger}| \geq 2,$ then $|I| = 2.$
\end{enumerate}
\end{Lemma}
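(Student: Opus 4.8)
The plan is to unwind Definition~\ref{d:InjectiveSet1} directly: by definition $I$ is a subset of $S^{\dagger}$ of \emph{maximal} cardinality satisfying conditions (1)--(3), and condition (1) caps $|I|$ at $2$. Thus the entire task reduces to determining, in each regime, the largest cardinality attainable by a subset of $S^{\dagger}$ that meets conditions (2) and (3). Parts (1) and (2) are nearly immediate from the definition, while part (3) requires exhibiting a concrete valid $2$-element subset.

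For part (1), if $|S^{\dagger}| = 0$ then $S^{\dagger} = \emptyset$, and since $I \subseteq S^{\dagger}$ we must have $I = \emptyset$. For part (2), the key observation is that condition (2) of Definition~\ref{d:InjectiveSet1} forces $I$ to be non-empty whenever $S^{\dagger}$ is non-empty: the requirement $\nu(I) = \nu(S^{\dagger})$ presupposes that $\nu(I)$ is defined, which only happens for non-empty $I$. Hence with $S^{\dagger} = \{h_1\}$, the empty set is disqualified, leaving $\{h_1\}$ as the only candidate; it trivially satisfies condition (2), since $\nu(\{h_1\}) = \nu(h_1) = \nu(S^{\dagger})$, and condition (3) vacuously, so $|I| = 1$.

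For part (3), I would produce a $2$-element subset of $S^{\dagger}$ satisfying conditions (2) and (3), which, combined with maximality and the bound $|I| \le 2$, forces $|I| = 2$. This is exactly the construction used in the proof of Lemma~\ref{l:ContainsInejectiveSet}: choose $h_1 \in S^{\dagger}$ with $\nu(h_1) = \nu(S^{\dagger})$, and then choose $h_2 \in S^{\dagger} - \{h_1\}$ minimizing $\mu(h_1, h_j)$ over all $h_j \in S^{\dagger} - \{h_1\}$ (possible since $|S^{\dagger}| \ge 2$). Condition (3) holds by the choice of $h_2$. For condition (2), since $\nu$ of a set is the minimum of the $\nu$-values of its elements and $\nu(h_1)$ is already the global minimum over $S^{\dagger}$, we get $\nu(\{h_1, h_2\}) = \min\{\nu(h_1), \nu(h_2)\} = \nu(h_1) = \nu(S^{\dagger})$. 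Thus $\{h_1, h_2\}$ is a valid subset of cardinality $2$, and maximality yields $|I| = 2$.

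There is no serious computational obstacle here; the work is entirely definitional bookkeeping. The only points demanding care are recognizing the implicit non-emptiness forced by condition (2) in part (2), and verifying in part (3) that the chosen pair inherits $\nu(S^{\dagger})$ — which hinges precisely on $\nu$ being a minimum, so that the extremal element $h_1$ controls $\nu(\{h_1, h_2\})$ regardless of the choice of $h_2$.
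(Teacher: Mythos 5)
Your proposal is correct and takes essentially the same approach as the paper: part (3) is exactly the paper's argument, namely the construction from Lemma~\ref{l:ContainsInejectiveSet} (pick $h_1$ realizing $\nu(S^{\dagger})$, then $h_2$ minimizing $\mu(h_1,\,h_j)$) combined with maximality and the cap $|I| \le 2$. The only difference is cosmetic: for parts (1) and (2) the paper routes through Lemma~\ref{emptyI} (via centrality of $H$) to get $I = \emptyset$ and $|I| \geq 1$, whereas you argue directly from $I \subseteq S^{\dagger}$ and Definition~\ref{d:InjectiveSet1}, which is equally valid and slightly more self-contained.
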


\begin{proof}
Let $H \le \h(p^{n})$. Then:
\begin{enumerate}
\item If $|S^{\dagger}| = 0,$ then $H$ is central. The result follows from Lemma \ref{emptyI}.
\item If $|S^{\dagger}| > 0,$ then $H$ is non-central. Since $I \subseteq S^{\dagger},$ we must have $|I| \ge 1$ by Lemma \ref{emptyI}. Thus, if $|S^{\dagger}| = 1,$ then $|I| = 1.$

\item Suppose that $|S^{\dagger}| \ge 2.$ Let $h_{1} \in S^{\dagger}$ such that $\nu(h_{1}) = \nu(S^{\dagger}).$ Then $h_{1}\in I$. Since $S^{\dagger} - \{h_{1}\}\neq \emptyset,$ there exists $h_{2}\in S^{\dagger} - \{h_{1}\}$ such that $\mu(h_{1}, \, h_{2}) \le \mu(h_{1}, \, h^{\prime}_{2})$ for all $h_{2}^{\prime}\in S^{\dagger}-\{h_{1}\}.$ By maximality, we have $I = \{h_{1}, \, h_{2}\}$.
\end{enumerate}
\end{proof}

\begin{Lemma}\label{l:Special}
Let $S$ be a special generating set for $H \leq \h(p^n)$ and $I$ an injective set for $S$ of cardinality 2. Put $I = \{h_1, \, h_2\},$ where $\nu\left(S^{\dagger}\right) = \nu(I) = \nu(h_1)$. If $h_j \in S^{\dagger} - \{h_1\},$ then $h_{j}$ either does not commute with $h_{1}$ or commutes properly with $h_{1}.$
\end{Lemma}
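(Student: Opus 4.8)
The plan is to observe that the claimed dichotomy is exactly the complement of the single behavior excluded by Lemma~\ref{l:Properly}. For a fixed $h_j \in S^{\dagger} - \{h_1\}$ there are only three mutually exclusive possibilities: $h_j$ does not commute with $h_1$; $h_j$ commutes properly with $h_1$; or $h_j$ commutes improperly with $h_1$. The first two are precisely the alternatives asserted by the statement, so the whole lemma reduces to ruling out the third.

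First I would fix an arbitrary $h_j \in S^{\dagger} - \{h_1\}$ and split on whether $h_j$ commutes with $h_1$. If it does not, the conclusion holds at once. If it does, then since $S$ is special and $\nu\left(S^{\dagger}\right) = \nu(h_1)$ by hypothesis, Remark~\ref{r:Super} (which rests on Lemma~\ref{super}) allows us to take $h_j$ to be supercommuting with $h_1$. This step is what makes the terminology of Definition~\ref{d:Proper} applicable: ``commutes properly'' and ``commutes improperly'' are defined only through the supercommuting representation $h_j \sim \bigl(r_1^{\prime}, \, 0, \, r_3^{\prime}\bigr)^{p^{n - \nu(h_1)}}$, so without the reduction to supercommuting the split between proper and improper commuting would not even be exhaustive among commuting elements.

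With the terminology in force, I would then invoke Lemma~\ref{l:Properly} directly. Since $\nu\left(S^{\dagger}\right) = \nu(h_1)$, that lemma guarantees that $S^{\dagger}$ contains no element that commutes improperly with $h_1$; in particular $h_j$ does not. Hence the third possibility is excluded, and $h_j$ must either fail to commute with $h_1$ or commute properly with it. The only role of the injective set $I$ in the statement is to fix the distinguished element $h_1$ as the one realizing $\nu\left(S^{\dagger}\right)$; no further property of $I$ is used.

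There is essentially no computational obstacle here, as all the genuine work has already been done in Lemma~\ref{super} and Lemma~\ref{l:Properly}. The one point requiring care is bookkeeping: one must confirm that the hypothesis $\nu(I) = \nu\left(S^{\dagger}\right) = \nu(h_1)$ matches exactly the standing assumption of Lemma~\ref{l:Properly} and of the setup preceding Definition~\ref{d:Proper}, so that ``commutes improperly'' is well-posed and the lemma applies verbatim.
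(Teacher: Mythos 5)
Your proposal is correct and follows the same route as the paper, whose entire proof is the one-liner ``This follows directly from Lemma~\ref{l:Properly}.'' Your unpacking --- using Remark~\ref{r:Super} to reduce commuting elements to supercommuting ones so that the proper/improper dichotomy of Definition~\ref{d:Proper} is exhaustive, then invoking Lemma~\ref{l:Properly} to exclude the improper case --- is precisely the implicit content of that citation, spelled out carefully.
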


\begin{proof}
This follows directly from Lemma~\ref{l:Properly}.
\end{proof}

The next theorem illustrates the importance for injective sets.

\begin{Theorem}\label{main}
Let $S$ be a special generating set for $H \le \hb{n}$ and $I$ an injective set for $S.$
\begin{enumerate}
\item If $|I| = 0,$ then $|P(H)| = |C(H)|.$
\item If $|I| = 1,$ then $|P(H)| = p|C(H)|.$
\item If $|I| = 2,$ then $|P(H)| = p^{2}|C(H)|.$
\end{enumerate}
\end{Theorem}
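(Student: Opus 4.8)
The plan is to treat the three cases in turn; cases (1) and (2) follow immediately from earlier results, and case (3) carries the real work. For case (1), $|I| = 0$ forces $H$ to be central by Lemma~\ref{emptyI}, whence $P(H) = C(H)$ by Lemma~\ref{PnoC2} and $|P(H)| = |C(H)|$. For case (2), Lemma~\ref{Isize} gives $|S^{\dagger}| = 1$, say $S^{\dagger} = \{h_{1}\}$ with $h_{1}$ non-central; then Lemma~\ref{l:P(H)=P(SN)} and Lemma~\ref{P-properties}~(5) yield $P(H) = P(S^{\dagger}) = P(h_{1})$ and $C(H) = C(h_{1})$, so Theorem~\ref{lo2} gives $|P(H)| = p|C(H)|$ at once.

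For case (3), Lemma~\ref{Isize} gives $|S^{\dagger}| \ge 2$. Write $I = \{h_{1}, \, h_{2}\}$ with $\nu(h_{1}) = \nu(S^{\dagger})$ and $\mu(h_{1}, \, h_{2}) \le \mu(h_{1}, \, h_{j})$ for all $h_{j} \in S^{\dagger} - \{h_{1}\}$, as in the definition of an injective set; as in Theorem~\ref{main1}, I may assume $\nu(h_{1}) = k_{11}$. By Lemma~\ref{l:Special}, $h_{2}$ either fails to commute with $h_{1}$ or commutes properly with it, so the hypotheses of Theorem~\ref{main1} hold for the pair $\{h_{1}, \, h_{2}\}$, and I reuse the very elements $z_{1}, \, z_{2}$ built in its proof. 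By that construction, $z_{1} \in P(h_{1}) - C(h_{1})$, $z_{1} \in C(h_{2})$, $z_{2} \in C(h_{1})$, and $z_{2} \in P(h_{2}) - C(h_{2})$. Since Theorem~\ref{main1} only guarantees $z_{1}, \, z_{2} \in P(h_{1}) \cap P(h_{2})$, the task is to promote this to $z_{1}, \, z_{2} \in P(H)$.

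The crucial new step, which I expect to be the main obstacle, is to show $z_{1}, \, z_{2} \in P(h_{j})$ for every remaining $h_{j} \in S^{\dagger}$; granting this, Lemma~\ref{l:P(H)=P(SN)} and Lemma~\ref{P-properties}~(5) give $z_{1}, \, z_{2} \in \bigcap_{h \in S^{\dagger}} P(h) = P(H)$. Recall $z_{i} = \n{h_{1}}^{w_{i}}\bigl(0, \, 0, \, (2 - i)r_{11}^{-1}p^{n - k_{11} - 1}\bigr)$, where the exponents $w_{i}$ have $p$-adic valuation at least $n - 1 - \mu(h_{1}, \, h_{2})$. Expanding with Lemma~\ref{producttosum} gives $[h_{j}, \, z_{i}]|_{2} = w_{i}[h_{j}, \, \n{h_{1}}]|_{2} + (2 - i)[h_{j}, \, (0, \, 0, \, r_{11}^{-1}p^{n - k_{11} - 1})]|_{2}$. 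Using Lemma~\ref{conmutador}~(1) and the fact that $\nu(h_{1}) = \nu(S^{\dagger})$ forces $\nu(h_{1}, \, h_{j}) = \nu(h_{1})$, one checks that the valuation of $[h_{j}, \, \n{h_{1}}]|_{2}$ is at least $\mu(h_{1}, \, h_{j})$, which by the minimality built into the injective set is at least $\mu(h_{1}, \, h_{2})$; thus the first summand has valuation at least $(n - 1 - \mu(h_{1}, \, h_{2})) + \mu(h_{1}, \, h_{2}) = n - 1$. The second summand equals $(2 - i)r_{j1}r_{11}^{-1}p^{k_{j1} - k_{11} + n - 1}$, which likewise has valuation at least $n - 1$ because $k_{j1} \ge \nu(h_{j}) \ge \nu(h_{1}) = k_{11}$. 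Hence $[h_{j}, \, z_{i}]|_{2}$ is a multiple of $p^{n - 1}$ in $\Z_{p^{n}}$, so $z_{i} \in P(h_{j})$ by Corollary~\ref{P}.

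With $z_{1}, \, z_{2} \in P(H)$ in hand, I close exactly as in Theorem~\ref{main1}. By Lemma~\ref{diagramChasing}, $P(H)/C(H) \cong \Z_{p}^{k}$ with $k \in \{1, \, 2\}$, so it suffices to exclude $k = 1$. If $k = 1$, then $z_{1}^{\ell}z_{2}^{-1} \in C(H) \le C(h_{2})$ for some integer $\ell$; since $z_{1} \in C(h_{2})$, this forces $z_{2} \in C(h_{2})$, contradicting $z_{2} \notin C(h_{2})$. Therefore $k = 2$ and $|P(H)| = p^{2}|C(H)|$. The only delicate point is the valuation bookkeeping above, where the defining minimality of $\mu$ on the injective set is precisely what makes both summands land in $p^{n - 1}\Z_{p^{n}}$.
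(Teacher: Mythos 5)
Your proposal is correct, and its skeleton matches the paper's: cases (1) and (2) are handled exactly as in the paper, and in case (3) you reuse the elements $z_{1}, z_{2}$ from the proof of Theorem~\ref{main1} and reduce the whole problem to showing $z_{1}, z_{2} \in P(h_{j})$ for every $h_{j} \in S^{\dagger}$, then rule out $k = 1$ in Lemma~\ref{diagramChasing}. Where you genuinely diverge is in how that membership is verified, and your route is a real streamlining. The paper splits $S^{\dagger}$ into the set $S_{1}$ of elements not commuting with $h_{1}$ and the set $S_{2}$ of elements commuting properly with $h_{1}$, rewrites each $h_{j} \in S_{2}$ in supercommuting form, and grinds through three subcases, each with an explicit unit $\widehat{r}_{j}$ and a recomputation of $\mu(j)$. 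You replace all of this with a single valuation estimate: since ${}_{\nu}h_{1} = \bigl(r_{11}, \, 0, \, r_{13}p^{k_{13} - k_{11}}\bigr)$, Lemma~\ref{conmutador}~(1) gives $\varphi\bigl([h_{j}, \, {}_{\nu}h_{1}]|_{2}\bigr) \geq \min\{k_{j1} + k_{13} - k_{11}, \, k_{j3}\} = \mu(h_{1}, \, h_{j})$ directly from the components of $h_{j}$ (using $\nu(h_{1}, \, h_{j}) = \nu(h_{1}) = k_{11}$), and this holds whether or not $h_{j}$ commutes with $h_{1}$, so no case split is needed; combined with $\varphi(w_{i}) \geq n - 1 - \mu(h_{1}, \, h_{2})$ (which I checked against both (\ref{w0}) and (\ref{w02})) and the minimality clause $\mu(h_{1}, \, h_{2}) \leq \mu(h_{1}, \, h_{j})$ of Definition~\ref{d:InjectiveSet1}, both summands of $[h_{j}, \, z_{i}]|_{2}$ are divisible by $p^{n - 1}$, which is all Corollary~\ref{P} requires. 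This works because for $j > 2$ only a lower bound on the valuation is needed; the exact unit coefficients matter only for $j = 1, 2$, where Theorem~\ref{main1} already supplies them. Your endgame also differs slightly: instead of the paper's homomorphism $\psi : P(H)/C(H) \rightarrow P\bigl(\widehat{H}\bigr)/C\bigl(\widehat{H}\bigr)$, you exclude $k = 1$ directly via $z_{1} \in C(h_{2})$, $z_{2} \notin C(h_{2})$ (noting $z_{1} \notin C(H)$ since $C(H) \leq C(h_{1})$), which is the same argument the paper itself uses at the start of the proof of Theorem~\ref{main1} and is perfectly valid here. Both routes are sound; yours buys a substantially shorter proof of the key containment, while the paper's explicit subcase computations record the precise units $\widehat{r}_{j}$ term by term.
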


Compare (3) with Theorem~\ref{main1}.

\begin{proof}
We invoke Lemma~\ref{P-properties}. If $|I| = 0,$ then $H$ is central and $S$ consists of central elements. Hence, $P(H) = P(S) = \hb{n} = C(H).$

If $|I| = 1,$ then $S^{\dagger} = \{h_{1}\}$ for some $h_{1} \in S$. By Lemma~\ref{l:P(H)=P(SN)} and Theorem~\ref{lo2}, we have $|P(H)| = \left|P\left(S^{\dagger}\right)\right| = |P(h_{1})| = p|C(h_{1})| = p\left|C\left(S^{\dagger}\right)\right| = p|C(H)|.$

Suppose now that $I = \{h_{1}, \, h_{2}\}.$ Throughout the rest of the proof we again adopt the notation from (\ref{Nh}).
By Lemma~\ref{l:Special} we may assume the following:
\begin{itemize}
\item $\nu\left(S^{\dagger}\right) = \nu(I) = \nu(h_1)$;
\item $h_2$ does not commute with $h_1$ or $h_{2}$ commutes properly with $h_{1}$. In the latter case, since $h_{2}$ belongs to a special generating set $S$, by Lemma~\ref{super} (1) we can write $h_2 \sim \Bigl(r_{21}^{\prime}, \, 0, \, r_{23}^{\prime}\Bigr)^{p^{n - \nu(h_1)}}$ where $0 \le r_{21}^{\prime}, \, r_{23}^{\prime} < p^{\nu(h_1)}$ and $r_{21}^{\prime} + r_{23}^{\prime} > 0.$ If $r_{2i}^{\prime} \neq 0$, then we write $r_{2i}^{\prime} = r_{2i}^{\prime \prime} p^{k^{(2)}_i}$ where $k^{(2)}_{i} \geq 0$ and $p \nmid r_{2i}^{\prime \prime}.$
\end{itemize}
Let $\widehat H = \left< I \right> \leq H$. Since $S$ is a special generating set for $H$, $I$ is a special generating set for $\widehat H$ such that $I^{\dagger} = I$ because neither $h_{1}$ nor $h_{2}$ is central. Replacing $S$ with $I$ in Theorem~\ref{main1}, we obtain
\[
P\left(\widehat H\right) / C\left(\widehat H\right)\cong \mathbb Z_p^2.
\]
Let $z_1 C\left(\widehat H\right)$ and $z_2 C\left(\widehat H\right)$ be the generators for $P\left(\widehat H\right) / C\left(\widehat H\right)$ constructed in the proof of Theorem~\ref{main1} (see (\ref{zi}), (\ref{w0}) and (\ref{w02})). In principle, $z_1$ and $z_2$ both belong to $P\left(\widehat H\right)$. The bulk of the proof will consist on showing that $z_1$ and $z_2$ are, in fact, in the potentially smaller group $P(H)$. Assume this has been done and $z_{1}$ and $z_{2}$ belong to $P(H)$. It follows from Lemma~\ref{diagramChasing} that $P(H)/C(H)$ is isomorphic to $\mathbb Z_p$ or $\mathbb Z_p^2.$ By Lemma~\ref{P-properties}, the map
\[
\psi: P(H)/C(H) \rightarrow P\left(\widehat H\right) / C\left(\widehat H\right)
 \]
given by $xC(H) \mapsto xC\left(\widehat H\right)$ is a well defined homomorphism. Notice that $z_1 C(H)$ and $z_2 C(H)$ are distinct and non-trivial in $P(H)/C(H)$. If $P(H)/C(H)$ were isomorphic to $\mathbb Z_p$, one of these elements would be a power of the other. But this would also hold for $z_1 C\left(\widehat H\right)$ and $z_2 C\left(\widehat H\right)$, which it does not. We conclude that $\psi$ is an isomorphism and $P(H)/C(H)$ is isomorphic to $\mathbb Z_p^2$.

The possibly different values for $w_i$ (see (\ref{w0}) and (\ref{w02})) will not play a role at first. We show then that $z_{1}, \, z_{2} \in P(h_{j})$ for all $h_{j} \in S^{\dagger}.$ Once this is established, Lemmas~\ref{P-properties} (5, 9) and \ref{l:P(H)=P(SN)} will give $z_{1}, \, z_{2} \in P\left(S^{\dagger}\right) = P(H),$ completing the proof of the theorem.

Let $\mu(j) = \mu(h_{1}, \, h_{j})$. For simplicity, we assume that $\nu(h_{1}) = k_{11}.$  We already know that $z_{1}, \, z_{2} \in P(h_{j})$ for $j = 1, \, 2$. In our generalized argument we will omit the case $j = 1$ but still include the case $j = 2$.
We define the following two sets:
\begin{eqnarray*}
S_1 & = & \left\{h \in S^{\dagger}  \, \middle| \,  h \text{ does not commute with $h_1$}\right\} \hbox{ \ \ \ and}\\
S_2 & = & \left\{h \in S^{\dagger} \, | \, h \neq h_{1} \text{ and } h \text{ commutes properly with $h_1$}\right\}.
\end{eqnarray*}
Suppose first that $h_j \in S_{1}$ ($j >2$). Using Lemma~\ref{producttosum}, we have (for $i = 1, \, 2$):

$[h_{j}, \, z_{i}]|_{2} = \Bigl[h_{j}, \, {}_{\nu}h_{1}^{w_{i}}\Bigl(0, \, 0, \, (2 - i)r_{11}^{-1}p^{n - k_{11} - 1}\Bigr)\Bigr] \Bigl |_{2}$

\vspace{.075in}

$\ \ \ = \Bigl[h_{j}, \, {}_{\nu}h_{1}^{w_{i}}\Bigr] \Bigl |_{2} + \Bigl[h_{j}, \, \Bigl(0, \, 0, \, (2 - i)r_{11}^{-1}p^{n - k_{11} - 1}\Bigr)\Bigr] \Bigl |_{2}$

\vspace{.075in}

$\ \ \ = \bigl(w_i \bigl(r_{j1}r_{13}p^{k_{13} + k_{j1} - k_{11}} - r_{11}r_{j3}p^{k_{j3}} \bigr) + (2 - i)r_{11}^{-1}r_{j1}p^{n + k_{j1} - k_{11} - 1}  \bigr)$

\vspace{.075in}

$\ \ \ = \Bigl(w_{i}r_{j}p^{\mu(j)} + (2 - i)r_{11}^{-1}r_{j1}p^{n + k_{j1} - k_{11} - 1}\Bigr) \hbox{ mod } p^{n},$

\vspace{.075in}

\noindent where, by definition of $\mu(j)$,
\[
r_j = r_{13}r_{j1}p^{k_{13} + k_{j1} - k_{11} - \mu(j)} - r_{11}r_{j3}p^{k_{j3} - \mu(j)}
\]
is a unit in $\Z_{p^{n}}$. Since $\nu\left(S^{\dagger}\right) = k_{11} \leq k_{j1}$, we observe immediately that
\begin{equation} \label{e:Pseudo1}
n - 1 \leq n + k_{j1} - k_{11} - 1.
\end{equation}
We argue next that $\varphi \left(w_{i} r_{j} p^{\mu(j)} \right) \geq n - 1$ for $i = 1, \, 2$ (see Notation \ref{n:Phi}). Indeed, using the explicit descriptions for $w_{1}$ and $w_{2}$ given by (\ref{w0}) and (\ref{w02}), a direct computation gives:
\small
\begin{align*}
&w_{i} r_{j} p^{\mu(j)} =\\
&
\begin{cases}
r_{j}r_{2}^{-1}\Bigl[(i - 1) - (2 - i)r_{11}^{-1}r_{21}p^{k_{21} - k_{11}}\Bigr]p^{n + \mu(j) - \mu(2) - 1} &\hbox{if \ \ } h_2\in S_1;\\
r_{j}\widehat{r}_2^{\, -1}\Bigl[(i - 1) - (2 - i)r_{11}^{-1}r_{21}^{\prime\prime}p^{n - 2k_{11} + k^{(2)}_{1}}\Bigr]p^{n + \mu(j) - \mu(2) - 1} &\hbox{if \ \ } h_2\in S_2, \, r_{21}^\prime \ne 0;\\
r_{j} \widehat{r}_2^{\, -1}\left(i - 1\right)p^{n + \mu(j) - \mu(2) - 1} &\hbox{if \ \ } h_2\in S_2, \, r_{21}^\prime = 0.\\
\end{cases}
\end{align*}
\normalsize
Thus we have $\varphi(w_{i} r_{j} p^{\mu(j)})\ge n + \mu(j) - \mu(2) - 1$ (we interpret $0$ as $p^{n}$). Since $\nu(S^{\dagger}) = k_{11}$ and $I$ is an injective set, we have $\mu(j) \geq \mu(2)$. From this it follows that
\[
n - 1 \le n + \mu(j) - \mu(2) - 1 \le \varphi\left(w_{i} r_{j} p^{\mu(j)}\right).
\]
Using (\ref{e:Pseudo1}) we deduce that $z_1, \, z_2 \in P(h_{j})$, as promised.

Suppose next that $h_j \in S_{2}$ ($j> 2$). Then $h_j \sim \Bigl(r_{j1}^{\prime}, \, 0, \, r_{j3}^{\prime}\Bigr)^{p^{n - k_{11}}},$ where $0 \le r_{j1}^{\prime}, \, r_{j3}^{\prime} <p^{k_{11}}$ and $r_{j1}^{\prime} + r_{j3}^{\prime} > 0$. We have three subcases to consider:

\vspace{.1in}

\noindent\underline{\textbf{Subcase I:}} Suppose that $r_{j1}^{\prime}, \, r_{j3}^{\prime} > 0.$ Put $r^{\prime}_{j1} = r^{\prime\prime}_{j1}p^{k_1^{(j)}}$ where $k_1^{(j)} \geq 0$ and $p\nmid r^{\prime\prime}_{j1},$ and $r^{\prime}_{j3} = r^{\prime \prime}_{j3} p^{k_3^{(j)}}$ where $k_3^{(j)} \geq 0$ and $p\nmid r^{\prime\prime}_{j3}$. By Lemmas~\ref{producttosum} and \ref{l:EquivCentPseudo}, we have (for $i = 1, \, 2$):
\begin{DispWithArrows}[tagged-lines = last]
[h_j, \, z_i]|_2 & =  \Bigl[\Bigl(r^{\prime \prime}_{j1} p^{k_1^{(j)}}, \, 0, \, r_{j3}^{\prime \prime} p^{k_3^{(j)}}\Bigr)^{p^{n - k_{11}}}, \, {}_{\nu}h_{1}^{w_i}\Bigl(0, \, 0, \, (2 - i)r_{11}^{-1}p^{n - k_{11} - 1}\Bigr)\Bigr] \Bigl|_2\\
              & =  \Bigl[\Bigl(r^{\prime \prime}_{j1} p^{k_1^{(j)}}, \, 0, \, r_{j3}^{\prime \prime} p^{k_3^{(j)}}\Bigr)^{p^{n - k_{11}}}, \, {}_{\nu}h_{1}^{w_i}\Bigr]\Bigl |_2\\
              & \qquad +  \Bigl[\Bigl(r^{\prime \prime}_{j1} p^{k_1^{(j)}}, \, 0, \, r_{j3}^{\prime \prime} p^{k_3^{(j)}}\Bigr)^{p^{n - k_{11}}}, \, \Bigl(0, \, 0, \, (2 - i)r_{11}^{-1}p^{n - k_{11} - 1}\Bigr)\Bigr] \Bigl |_2\\
              & =\Bigl(w_i\Bigl[\Bigl(r^{'}_{j1}, \, 0, \, r_{j3}^{'}\Bigr)^{p^{n - k_{11}}}, \, {}_{\nu}h_1\Bigr] \Bigl |_2+ (2 - i)r_{j1}^{''}r_{11}^{-1}p^{2(n - k_{11}) - 1 + k_{1}}\Bigr) \\
              & =  \Bigl( w_i\widehat{r}_{j}p^{\mu(j)} + (2 - i)r_{j1}^{\prime \prime}r_{11}^{-1}p^{2(n - k_{11}) - 1 + k_{1}^{(j)}}\Bigr) \hbox{ mod } p^{n},\label{cso1}
\end{DispWithArrows}
\noindent where
\[
\widehat{r}_{j}=r_{j1}^{\prime \prime}r_{13}p^{n+k_{1}^{(j)}+k_{13}-2k_{11}-\mu(j)}
-r_{j3}^{\prime \prime}r_{11}p^{n+k_{3}^{(j)}-k_{11}-\mu(j)}.
\]
By the statement before Lemma~\ref{l:EquivCentPseudo},
\begin{equation}
\begin{split}
\mu(j) & = \mu\left(h_{1}, \, \left(r^{\prime \prime}_{j1}p^{k_{1}^{(j)}}, \, 0, \, r^{\prime \prime}_{j3}p^{k_{3}^{(j)}} \right)^{p^{n - k_{11}}}\right)\\
       & = \min\left\{n + k_{1}^{(j)} + k_{13} - k_{11}, \, n + k_{3}^{(j)}\right\} - k_{11}.
\end{split}
\end{equation}
We conclude that $\widehat{r}_{j}$ is a well-defined unit in $\mathbb Z_{p^n}$. We must show that the powers of $p$ obtained in the above expression for $[h_{j}, \, z_{i}]|_{2}$ are at least $n - 1$. By Lemma~\ref{super} (1), $2k_{11} \leq n + k_{1}^{(j)}$. Thus,
\begin{equation} \label{e:Ineq}
n - 1 \leq 2(n - k_{11}) + k_1^{(j)} - 1.
\end{equation}
\noindent From this we can deduce that the power of $p$ in the second term of (\ref{cso1}) is at least $n - 1$. We will prove that the same holds for the first term of (\ref{cso1}) at the end of the proof.

\vspace{.1in}

\noindent\underline{\textbf{Subcase II:}} Assume next that $r^{\prime}_{j1} > 0$ and $r^{\prime}_{j3} = 0.$ As before, we write $r^{\prime}_{j1} = r^{\prime \prime}_{j1}p^{k_1^{(j)}}$ with $k_1^{(j)} \geq 0$ and $p \nmid r^{\prime \prime}_{j1}$. By Lemmas~\ref{producttosum} and \ref{l:EquivCentPseudo} we have (for $i = 1, \, 2$):
\begin{DispWithArrows}[tagged-lines = last]
[h_j, \, z_i]|_2 & = \Bigl[\Bigl(r^{\prime \prime}_{j1} p^{k_1^{(j)}}, \, 0, \, 0 \Bigr)^{p^{n - k_{11}}}, \, {}_{\nu}h_{1}^{w_i}\Bigl(0, \, 0, \, (2 - i)r_{11}^{-1}p^{n - k_{11} - 1}\Bigr)\Bigr] \Bigl|_2\\
                 & = \Bigl[\Bigl(r^{\prime \prime}_{j1} p^{k_1^{(j)}}, \, 0, \,  0\Bigr)^{p^{n - k_{11}}}, \, {}_{\nu}h_{1}^{w_i}\Bigr]\Bigl |_2\\
                 & \qquad + \Bigl[\Bigl(r^{\prime \prime}_{j1} p^{k_1^{(j)}}, \, 0, \, 0 \Bigr)^{p^{n - k_{11}}}, \, \Bigl(0, \, 0, \, (2 - i)r_{11}^{-1}p^{n - k_{11} - 1}\Bigr)\Bigr] \Bigl |_2\\
                 & =\Bigl(w_i\Bigl[\Bigl(r^{'}_{j1}, \, 0, \, r_{j3}^{'}\Bigr)^{p^{n - k_{11}}}, \, {}_{\nu}h_1\Bigr] \Bigl |_2 + (2 - i)r_{j1}^{''}r_{11}^{-1}p^{2(n - k_{11}) - 1 + k_{1}}\Bigr) \\
                 & =\Bigl( w_i\widehat{r}_{j}p^{\mu(j)} + (2 - i)r_{j1}^{\prime \prime}r_{11}^{-1}p^{2(n - k_{11}) - 1 + k_{1}^{(j)}}\Bigr) \hbox{ mod } p^{n},\label{cso2}
\end{DispWithArrows}
where
\[
\widehat{r}_j = r_{j1}^{\prime \prime}r_{13}p^{n + k_{1}^{(j)} + k_{13} - 2k_{11} - \mu(j)}.
\]
Since $h_{j}$ commutes properly with $h_{1}$, the same calculation that led to (\ref{e:MuThird}) gives that $\mu(j) = n + k_{13} + k^{(j)}_{1} - 2k_{11}.$ We conclude that $\widehat{r}_j$ is, once again, a well-defined unit. Again, we can deduce that the exponent of $p$ in the second term of (\ref{cso2}) is at least $n-1$. We will prove that the same is true for the first term of (\ref{cso2}) at the end of the proof.

\vspace{.1in}

\noindent\underline{\textbf{Subcase III:}} Finally, assume that $r^{\prime}_{j1} = 0$ and $r^{\prime}_{j3} > 0$. As usual, write $r^{\prime}_{j3} = r^{\prime \prime}_{j3} p^{k^{(j)}_3}$ with $k^{(j)}_3 \geq 0$ and $p \nmid r^{\prime \prime}_{j3}$. Using Lemmas~\ref{producttosum} and \ref{l:EquivCentPseudo} and computing in the standard way, we have (for $i = 1, \, 2$):
\begin{DispWithArrows}[tagged-lines = last]
[h_j, \, z_i]|_2 & = \Bigl[\Bigl(0, \, 0, \, r_{j3}^{\prime \prime} p^{k^{(j)}_3}\Bigr)^{p^{n - k_{11}}}, \, {}_{\nu}h_{1}^{w_i}\Bigl(0, \, 0, \, (2 - i)r_{11}^{-1}p^{n - k_{11} - 1}\Bigr)\Bigr] \Bigl|_2\\
                 & = \Bigl[\Bigl(0, \, 0, \, r_{j3}^{\prime \prime} p^{k^{(j)}_3}\Bigr)^{p^{n - k_{11}}}, \, {}_{\nu}h_{1}^{w_i}\Bigr]\Bigl |_2\\
                 & \qquad + \Bigl[\Bigl(0, \, 0, \, r_{j3}^{\prime \prime} p^{k^{(j)}_3}\Bigr)^{p^{n - k_{11}}}, \, \Bigl(0, \, 0, \, (2 - i)r_{11}^{-1}p^{n - k_{11} - 1}\Bigr)\Bigr] \Bigl |_2\\
                 & = \Bigl(w_i\Bigl[\Bigl(r^{'}_{j1}, \, 0, \, r_{j3}^{'}\Bigr)^{p^{n - k_{11}}}, \, {}_{\nu}h_1\Bigr] \Bigl |_2 + (2 - i)r_{j1}^{''}r_{11}^{-1}p^{2(n - k_{11}) - 1 + k_{1}}\Bigr) \\
                 & = w_i\widehat{r}_{j}p^{\mu(j)} \hbox{ mod } p^{n}, \label{cso3}
\end{DispWithArrows}
where $\widehat{r}_{j} = -r^{\prime \prime}_{j3}r_{11}p^{n + k^{(j)}_3 - k_{11} - \mu(j)}$. By the statement before Lemma~\ref{l:EquivCentPseudo} we have:
\[
\mu(j) = \mu\left(h_{1}, \, \Bigl(0, \, 0, \, r_{j3}^{\prime \prime}p^{k^{(j)}_{3}}\Bigr)^{p^{n - k_{11}}} \right) = \min\left\{n + k_{13}, \, n + k^{(j)}_{3}\right\} - k_{11}.
\]
Since $r^{\prime}_{j3} \neq 0$, we have that $n + k^{(j)}_3 - k_{11} < n$, and hence, $k^{(j)}_{3} < k_{11}$. Moreover, $\nu(h_{1}) = k_{11} \leq k_{13}$ readily implies that $\mu(j) = n + k^{(j)}_{3} - k_{11}$ in this case. We again conclude that $\widehat{r}_{j}$ is a well-defined unit.

To finish the proof, we calculate the power of $p$ in the first term of (\ref{cso1}), (\ref{cso2}), and in the only term of (\ref{cso3}). We have:
\small
\begin{align*}
&w_{i} \widehat{r}_{j} p^{\mu(j)} =\\
&
\begin{cases}
\widehat{r}_{j}r_{2}^{-1}\Bigl[(i - 1) - (2 - i)r_{11}^{-1}r_{21}p^{k_{21} - k_{11}}\Bigr]p^{n + \mu(j) - \mu(2) - 1} &\hbox{if \ \ \ } h_2 \in S_1;\\
\widehat{r}_{j}\widehat{r}_2^{\, -1}\Bigl[(i - 1) - (2 - i)r_{11}^{-1}r_{21}^{\prime\prime}p^{n - 2k_{11} + k^{(2)}_{1}}\Bigr]p^{n + \mu(j) - \mu(2) - 1} &\hbox{if \ \ \ }h_2\in S_2, \, r_{21}^{\prime} \ne 0;\\
\widehat{r}_{j} \widehat{r}_2^{\, -1}\left(i - 1\right)p^{n + \mu(j) - \mu(2) - 1} &\hbox{if \ \ \ } h_2\in S_2,\, r_{21}^{\prime} = 0.\\
\end{cases}
\end{align*}
\normalsize
Thus, we have $\varphi\left(w_{i} \widehat{r}_{j}p^{\mu(j)}\right) \ge n + \mu(j) - \mu(2) - 1$ (we interpret $0$ as $p^{n}$). Since $\nu\left(S^{\dagger}\right) = k_{11}$ and $I$ is an injective set, we have $\mu(j) \geq \mu(2)$. From this it follows that
\[
n - 1 \le n + \mu(j) - \mu(2) - 1 \le \varphi\left(w_{i} \widehat{r}_{j} p^{\mu(j)}\right).
\]
By (2) of Corollary~\ref{P}, $w_i\widehat{r}_jp^{\mu(j)}$ belongs to $P(h_{j})$ in all situations. From this we deduce that $z_1, \, z_2 \in P(h_{j})$, as promised.
\end{proof}

\begin{Corollary}\label{c:InjectiveForSubgroup}
Let $H \le \hb{n}$ with special generating sets $S_1$ and $S_2.$ If $I_1$ and $I_2$ are injective sets for $S_{1}$ and $S_{2}$ respectively, then $|I_1| = |I_2|$.
\end{Corollary}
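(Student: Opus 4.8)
The plan is to observe that both $P(H)$ and $C(H)$ are attached to the subgroup $H$ itself (together with the fixed map $q$ from the short exact sequence (\ref{sh-H})), and not to any particular generating set chosen for $H.$ In particular the number $|P(H)|$ and the number $|C(H)|$ are literally the same in any computation involving $H,$ regardless of how $H$ is presented. The strategy is therefore to compute the single intrinsic quantity $|P(H)|$ in two different ways, once through each of the given special generating sets, and to read off the equality of the exponents.

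Concretely, I would first apply Theorem~\ref{main} to the pair $(S_1, \, I_1),$ obtaining $|P(H)| = p^{|I_1|}|C(H)|,$ and then apply it again to the pair $(S_2, \, I_2),$ obtaining $|P(H)| = p^{|I_2|}|C(H)|.$ One should note here that by Lemma~\ref{Isize} both $|I_1|$ and $|I_2|$ lie in $\{0, \, 1, \, 2\},$ so the three cases of Theorem~\ref{main} genuinely exhaust all possibilities and the formula $|P(H)| = p^{|I|}|C(H)|$ applies in each invocation. Equating the two expressions yields $p^{|I_1|}|C(H)| = p^{|I_2|}|C(H)|.$

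Finally, since $|C(H)|$ is a positive integer and hence nonzero, I would cancel it to deduce $p^{|I_1|} = p^{|I_2|},$ and therefore $|I_1| = |I_2|.$ There is essentially no genuine obstacle in this argument; the only point that warrants an explicit remark is precisely the invariance noted at the outset, namely that the left-hand side $|P(H)|$ does not change when the generating set changes. With that observation recorded, the corollary is an immediate consequence of Theorem~\ref{main}.
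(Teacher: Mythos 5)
Your proposal is correct and is precisely the argument the paper intends: the corollary is stated immediately after Theorem~\ref{main} with no separate proof, because $|P(H)|$ and $|C(H)|$ depend only on $H$ (and the fixed map $q$), so applying Theorem~\ref{main} to each pair $(S_i, \, I_i)$ gives $p^{|I_1|}|C(H)| = |P(H)| = p^{|I_2|}|C(H)|$ and hence $|I_1| = |I_2|$. Your explicit appeal to Lemma~\ref{Isize} to confirm that the three cases of Theorem~\ref{main} are exhaustive is a sound touch, but otherwise the reasoning matches the paper's.
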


In light of Corollary~\ref{c:InjectiveForSubgroup}, we can now generalize Definition~\ref{d:InjectiveSet1}.

\begin{Definition}\label{d:InjectiveSetForSubgroup}
An \emph{injective set} for any subgroup $H$ of $\hb{n}$ is an injective set for some special generating set for $H$.
\end{Definition}

\begin{Remark}\label{re-in}
By Corollary~\ref{c:InjectiveForSubgroup}, every injective set of a subgroup of $\hb{n}$ has the same cardinality.
\end{Remark}

We now digress and complete the proof of Lemma~\ref{PnoC2} using Theorem~\ref{main}. Suppose that $H \leq \h(p^n)$ and $P(H) = C(H).$ We assert that $H \leq Z(\h(p^n)).$ Let $I$ be any injective set for $H.$ We claim that $I$ must be empty. If $I \neq \emptyset,$ then $|P(H)| = p^{i}|C(H)|$ for $i = 1$ or $i = 2$ by Theorem~\ref{main}. This means that $|P(H)| > |C(H)|$ and, consequently, $P(H) \neq C(H).$ This contradicts the hypothesis that $P(H) = C(H)$ and proves the claim. Hence, by Lemma~\ref{emptyI}, $H \leq Z(\h(p^n))$ as asserted.

The next lemma will be needed later.

\begin{Lemma}\label{inj-size}
Let $q : \hb{n} \rightarrow \hb{n - 1}$ be the group homomorphism from Lemma~\ref{short}. If $H \leq \hb{n - 1}$ and $I$ is an injective set for $q^{-1}(H),$ then $|I| = 2.$
\end{Lemma}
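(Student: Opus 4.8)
The plan is to show that the preimage $q^{-1}(H)$ of any subgroup $H \le \hb{n-1}$ always has an injective set of size exactly $2$, and by Lemma~\ref{Isize} this reduces to proving that $q^{-1}(H)$ contains at least two ``independent'' non-central elements in the sense measured by $\nu$ and $\mu$. The key observation is that $\ker q = \im f = \Z_p^3 p^{n-1}$ is contained in $q^{-1}(H)$ for every $H$, since $q(\ker q) = 1 \le H$. In particular, $q^{-1}(H)$ always contains the two elements
\[
a = \bigl(p^{n-1}, \, 0, \, 0\bigr) \qquad \text{and} \qquad b = \bigl(0, \, 0, \, p^{n-1}\bigr),
\]
both of which lie in $\hb{n}^{\dagger}$ because their first (respectively third) components are non-zero and they are not central. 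This guarantees $|(q^{-1}(H))^{\dagger}| \ge 2$ regardless of $H$, which by part (3) of Lemma~\ref{Isize} forces $|I| = 2$ for any injective set $I$ of any special generating set of $q^{-1}(H)$.

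**First I would** make precise that $q^{-1}(H)$ is indeed a subgroup of $\hb{n}$ containing $\ker q$: this is immediate since $q$ is a homomorphism and $1 \in H$, so $q^{-1}(H) \supseteq q^{-1}(1) = \ker q$. **Next I would** exhibit the two non-central elements $a, b \in \ker q \subseteq q^{-1}(H)$ displayed above and verify via Lemma~\ref{conmutador}~(1),(2) that each is non-central (neither has the form $(0, x, 0)$). **Then I would** invoke Lemma~\ref{Isize}~(3): since $(q^{-1}(H))^{\dagger}$ contains at least the two distinct elements $a$ and $b$, we have $\bigl|(q^{-1}(H))^{\dagger}\bigr| \ge 2$, and therefore any injective set $I$ for any special generating set of $q^{-1}(H)$ satisfies $|I| = 2$. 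By Definition~\ref{d:InjectiveSetForSubgroup} and Remark~\ref{re-in}, the cardinality of an injective set is an invariant of the subgroup, so $|I| = 2$ for the injective set in the statement.

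**The main subtlety** to handle carefully is not the existence of two non-central elements — that is easy — but confirming that $a$ and $b$ genuinely contribute to $|(q^{-1}(H))^{\dagger}| \ge 2$ as \emph{distinct} non-central elements that survive into any special generating set's $S^{\dagger}$. The cleanest route is to apply Lemma~\ref{Isize} directly to the subgroup $q^{-1}(H)$: that lemma is stated for an arbitrary special generating set $S$ and shows $|I| = 2$ as soon as $|S^{\dagger}| \ge 2$. Thus I only need that \emph{some} special generating set of $q^{-1}(H)$ has at least two non-central elements, and since $a, b \in q^{-1}(H)$ are non-central and distinct, any generating set can be enlarged to include them (adding generators does not change the group), after which the special-generating-set construction of Lemma preceding Definition~\ref{d:Supercommuting} preserves having at least two non-central elements. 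I would therefore phrase the argument to first produce a special generating set $S$ with $|S^{\dagger}| \ge 2$ and then close with Lemma~\ref{Isize}~(3) and Remark~\ref{re-in}.
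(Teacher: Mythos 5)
There is a genuine gap, and it sits exactly where your proposal waves its hands. Your central inference --- that $\bigl|\bigl(q^{-1}(H)\bigr)^{\dagger}\bigr| \ge 2$ forces $|I| = 2$ via Lemma~\ref{Isize}~(3) --- misreads that lemma: its hypothesis is $\left|S^{\dagger}\right| \ge 2$ for a \emph{special generating set} $S$, not merely that the subgroup contains two distinct non-central elements. These conditions are far apart. For instance, $K = \left<(1, \, 0, \, 0)\right> \le \hb{n}$ contains many distinct non-central elements ($h, \, h^{2}, \, \ldots$), yet $S = \{(1, \, 0, \, 0)\}$ is a special generating set with $\left|S^{\dagger}\right| = 1$, so by Lemma~\ref{Isize}~(2) and Remark~\ref{re-in} \emph{every} injective set for $K$ has cardinality $1$. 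Thus exhibiting the two non-central kernel elements $a = \left(p^{n-1}, \, 0, \, 0\right)$ and $b = \left(0, \, 0, \, p^{n-1}\right)$ inside $q^{-1}(H)$ proves nothing by itself.

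Your attempted repair --- enlarge a generating set to include $a$ and $b$, then assert that ``the special-generating-set construction preserves having at least two non-central elements'' --- is precisely the unproved crux, and it is not automatic. In that construction, any generator $\sim$ a power of the minimal-$\nu$ element $h_{1}$ gets replaced by a \emph{central} element. Since $\nu(a) = \nu(b) = n - 1$ is the largest possible value, the minimal-$\nu$ element will generally be a lift $\widehat{h}$ of a generator of $H$; if $a \sim \widehat{h}^{w_{1}}$ and $b \sim \widehat{h}^{w_{2}}$, then both $a$ and $b$ are discarded during specialization, potentially leaving $\left|S^{\dagger}\right| = 1$. The paper's proof is devoted to ruling this collapse out: when the injective set for $H$ is $\{h_{1}, \, h_{2}\}$, it shows the lifts survive because a relation $\widehat{h}_{2} \sim \widehat{h}_{1}^{w}$ would descend under $q$ to $h_{2} \sim h_{1}^{w}$, contradicting specialness of $S$; when the injective set for $H$ is a single $\{h\}$, it shows by an explicit computation modulo $p^{n}$ (the four congruences forcing $k_{1} \ge k_{1} + 2$) that $\widehat{h}^{w_{1}} \sim \left(p^{n-1}, \, 0, \, 0\right)$ and $\widehat{h}^{w_{2}} \sim \left(0, \, 0, \, p^{n-1}\right)$ cannot hold simultaneously, so at least one kernel element survives into the specialized set; and when $H$ is central it verifies separately that $\left\{\left(p^{n-1}, \, 0, \, 0\right), \, z, \, \left(0, \, 0, \, p^{n-1}\right)\right\}$ is itself special. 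Without these verifications your argument does not go through.
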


\begin{proof}
Let $S$ be a special generating set for $H$.  We fix $S$, once and for all, for the rest of the proof. For $h \in S$ we let $\widehat{h}$ be the same element as $h$, except viewed as lying in $\h(p^n)$.
We claim that
\[
\widehat{S} = \left\{\widehat{h} \, \big| \, h \in S\right\} \bigcup \left\{\left(p^{n - 1}, \, 0, \, 0\right), \, \left(0, \, 0, \, p^{n - 1}\right), \ \left(0, p^{n - 1}, \, 0\right)\right\}
\]
is a generating set for $q^{-1}(H)$. To see this, let $x \in q^{-1}(H)$. Then $q(x) \in H = \left< S \right >$, so that
 $q(x) = s_{1}^{m_{1}} \cdots s_{l}^{m_{l}}$ for some $s_{1}, \, \ldots, \, s_{l} \in S$ and integers $m_{1}, \, \ldots, \, m_{l}.$ Set $y = {\left(\widehat s_{1} \right)}^{m_{1}} \cdots {\left( \widehat s_{l} \right)}^{m_{l}}$, and note that $q(y) = q(x)$. This means that
\[
xy^{-1} \in \ker q = \hbox{im }f =
\left< \left(p^{n - 1}, \, 0, \, 0\right), \, \left(0, \, 0, \, p^{n - 1}\right), \ \left(0, p^{n - 1}, \, 0\right)  \right>,
\]
proving our claim.

We have two cases to consider:

\noindent \textbf{\underline{CASE I}: $H$ is central.}

In this case,  $H = \left<s\right>$ for some $s \in Z \left(\h \left(p^{n - 1} \right)\right).$ Hence,
$q^{-1}(H)$ is generated by
\[
\widehat{S} = \left\{ \widehat{s}, \,  \left(p^{n - 1}, \, 0, \, 0\right), \, \left(0, \, 0, \, p^{n - 1}\right), \ \left(0, \, p^{n - 1}, \, 0\right) \right\}.
\]
By definition of $\widehat{s}$, both
$\widehat{s}$ and $\left(0, p^{n - 1}, \, 0\right)$ lie in $Z \left(\h(p^n)\right).$ Hence, there exists $z \in Z(\h(p^n))$ such that
\[
q^{-1}(H) = \left< \left(p^{n - 1}, \, 0, \, 0\right), \, z, \, \left(0, \, 0, \, p^{n - 1}\right) \right>.
\]
Notice that $\left \{\left(p^{n - 1}, \, 0, \, 0\right), \, z, \, \left(0, \, 0, \, p^{n - 1}\right) \right \}$ is a special generating set and \\$\left \{ \left(p^{n - 1}, \, 0, \, 0\right), \, \left(0, \, 0, \, p^{n - 1}\right) \right \}$ an injective set for $q^{-1}(H)$. This completes the first case.\\

\noindent \textbf{\underline{CASE II}: $H$ is non-central.}

We handle first the situation when injective sets have two elements. Accordingly, we let $I = \{h_1, \, h_2\} \subseteq S^{\dagger}$ be an injective set for $H$. Without loss of generality, we set $\nu(I) = \nu(S^{\dagger}) = \nu(h_{1})$.
Recall that 
\[
\widehat S = \left\{\widehat{h} \, \vert \, h \in S\right\}
\bigcup \left\{\left(p^{n - 1} , \, 0, \, 0 \right),  \left(0 , \, 0, \, p^{n - 1} \right), \left(0 , \, p^{n - 1}, \, 0 \right)    \right\}
\]
generates $q^{-1}(H)$. Note also that
\[
\widehat{S}^{\dagger} = \left\{\widehat{h} \, \vert \, h \in S    \right\} \bigcup \left\{\left(p^{n - 1} , \, 0, \, 0 \right), \left(0, \, 0, \, p^{n - 1}\right)\right\}.
\]
In particular, observe that $\widehat{h}_1$ and $\widehat{h}_2$ lie in $\widehat{S}^{\dagger}.$ Our goal is to construct a special generating set as outlined in the proof of Lemma~\ref{l:ContainsInejectiveSet}. By construction, for all $h \in S^{\dagger}$ we have
\[
\nu\left(\widehat h_{1}\right) \leq \nu\left(\widehat{h}\right) \leq \nu\left(p^{n - 1}, \, 0, \, 0\right), \, \nu\left(0, \, 0, \, p^{n - 1}\right).
\]
This ensures that $\nu\left(\widehat{S}^{\dagger}\right) = \nu\left(\widehat{h}_{1}\right).$

Now, $\widehat{h}_2 \nsim \left(\widehat{h}_1\right)^{w}$ for any $w > 0.$ For if $\widehat{h}_{2} \sim \left(\widehat{h}_1\right)^{w}$ for some $w > 0,$ then $h_{2} = q\left(\widehat{h}_{2}\right) \sim q\left(\widehat{h}_{1}\right)^{\omega} = h_{1}^{\omega},$ and thus, $S$ would not be special. We proceed as in the proof of Lemma~\ref{l:ContainsInejectiveSet} and obtain a special generating set ${\widehat S}^{\prime}$ containing $\widehat{h}_1$ and $\widehat{h}_2.$ Thus, $\left|\left(\widehat{S}^{\prime}\right)^{\dagger}\right| > 1$.

To finish the proof we consider the case in which $I = \{ h \} \subseteq S^{\dagger}$ is an injective set for $H$ with $\nu(I) = \nu\left( {S^{\dagger}} \right) = \nu (h)$. Again, we follow the proof of Lemma \ref{l:ContainsInejectiveSet} to construct a special generating set $\widehat{S}^{\prime}$ from $\widehat{S}$ as in the first situation. By the same argument as in the previous paragraphs we must have $\widehat h \in \left(\widehat{S}^{\prime}\right)^{\dagger}$. 

Notice that if $h|_{1} = 0,$ then there is no $w > 0$ such that $\left(\widehat{h}\right)^{w} \sim \left(p^{n - 1}, \, 0, \, 0\right)$. Similarly, if $h|_{3} = 0,$ then there is no $w > 0$ such that $\left(\widehat{h}\right)^{w} \sim \left(0, \, 0, \, p^{n - 1}\right).$ This means that the special generating set $\widehat{S}^{\prime}$ obtained from $\widehat{S}$ must contain $\widehat{h}$ and either $\left(p^{n - 1}, \, 0, \, 0\right)$ or $\left(0, \, 0, \, p^{n - 1}\right)$. Thus, $\left|\left(\widehat{S}^{\prime}\right)^{\dagger}\right| \ge 2.$ The result follows from Lemma~\ref{Isize} (3).

Suppose now that $h|_{1}$ and $h|_{3}$ are non-zero and that there are $w_{1} > 0$ and $w_{2} > 0$ such that $\widehat{h}^{w_{1}} \sim \left(p^{n - 1}, \, 0, \, 0\right)$ and $\widehat{h}^{w_{2}} \sim \left(0, \, 0, \, p^{n - 1}\right)$. Using the notation given in (\ref{n:Factored_h}) we must have: 
\begin{multicols}{2}
\begin{enumerate}
\item $r_{1}w_{1}p^{k_{1}} = p^{n - 1}\hbox{ mod }p^{n}$;
\item $r_{3}w_{1}p^{k_{3}} = p^{n}\hbox{ mod }p^{n}$;
\item $r_{3}w_{2}p^{k_{3}} = p^{n - 1}\hbox{ mod }p^{n}$;
\item $r_{1}w_{2}p^{k_{1}} = p^{n}\hbox{ mod }p^{n}$.
\end{enumerate}
\end{multicols}
\noindent From (1) we can deduce that $w_{1} = r_{1}^{-1}p^{n - 1 - k_{1}}$ and substituting in (2) we obtain $k_{3} \ge k_{1} + 1$. Similarly, from (3) we obtain $w_{2} = r_{3}^{-1}p^{n - 1 - k_{3}}$ and substituting in (4) we obtain $k_{1} \ge k_{3} + 1$. From this we get $k_{1} \ge k_{1} + 2$ which is absurd. Thus, we must have either $\widehat{h}^{w} \nsim \left(p^{n - 1}, \, 0, \, 0\right)$ or $\widehat{h}^{w} \nsim \left(0, \, 0, \, p^{n - 1}\right)$ for all $w>0$. 

We conclude that the special generating set $\widehat{S}^{\prime}$  must contain $\widehat{h}$ and either $\left(p^{n - 1}, \, 0, \, 0\right)$ or $\left(0, \, 0, \, p^{n - 1}\right)$. Thus $\left|\left(\widehat{S}^{\prime}\right)^{\dagger}\right|\ge 2$ and this gives the result by Lemma~\ref{Isize}  (3).
\end{proof}

Next we define a function that will allow us to consolidate some of our earlier results. By Remark~\ref{re-in}, this function is well-defined.

\begin{Definition}\label{d:Delta}
For any $H \le \h(p^{n})$ and any injective set $I$ for $H,$ we define:
\[
\delta(H) = 3 - |I|
\]
\end{Definition}

\begin{Lemma}\label{post-main}
If $H \le \h(p^{n}),$ then $|C(H)| = p^{\delta(H)}|C(q(H))|.$
\end{Lemma}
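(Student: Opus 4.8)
The plan is to derive this as an immediate consequence of two already-established facts: Theorem~\ref{main}, which pins down $|P(H)|$ in terms of $|C(H)|$ via the size of an injective set, and Lemma~\ref{P-properties}~(4), which pins down $|P(H)|$ in terms of $|C(q(H))|$ via the order of the kernel of $q$. The role of $\delta$ is precisely to package the three cases of Theorem~\ref{main} into a single formula, so the whole statement should fall out by equating two computations of $|P(H)|$.

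First I would fix a special generating set $S$ for $H$ together with an injective set $I$ for $S$; such an $I$ exists by Lemma~\ref{l:ContainsInejectiveSet}, and Definition~\ref{d:Delta} is unambiguous because Remark~\ref{re-in} (via Corollary~\ref{c:InjectiveForSubgroup}) guarantees that $|I|$ depends only on $H$. With $\delta(H) = 3 - |I|$, the three clauses of Theorem~\ref{main} combine into the uniform identity $|P(H)| = p^{|I|}\,|C(H)|$, valid for $|I| = 0, 1, 2$.

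Next I would compute $|P(H)|$ a second way. Applying Lemma~\ref{P-properties}~(4) to the short exact sequence~(\ref{sh-H}) of Lemma~\ref{short}, in which $G_1 = \Z_p^3$ has order $p^3$, yields $|P(H)| = p^3\,|C(q(H))|$. Equating the two expressions for $|P(H)|$ gives $p^{|I|}\,|C(H)| = p^3\,|C(q(H))|$, and dividing by $p^{|I|}$ produces $|C(H)| = p^{3 - |I|}\,|C(q(H))| = p^{\delta(H)}\,|C(q(H))|$, as desired.

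Since the argument is nothing more than a direct combination of two prior results, I do not anticipate any genuine obstacle. The only point that deserves a moment's care is the uniform reformulation of Theorem~\ref{main} as $|P(H)| = p^{|I|}|C(H)|$, but this is exactly the bookkeeping that the definition $\delta(H) = 3 - |I|$ is designed to streamline, so the three cases collapse automatically.
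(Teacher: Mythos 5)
Your proof is correct, and it is a mildly but genuinely more uniform route than the paper's. The paper only runs your computation---equating $|P(H)| = p^{|I|}|C(H)|$ from Theorem~\ref{main} with $|P(H)| = |\ker q||C(q(H))| = p^{3}|C(q(H))|$ from Lemma~\ref{P-properties}~(4)---in the single case $\delta(H) = 1$; for $\delta(H) = 2$ and $\delta(H) = 3$ it instead falls back on the earlier direct centralizer computations of Corollary~\ref{premain2} together with Lemma~\ref{emptyI} (and implicitly Lemma~\ref{Isize}, to translate $|I| = 0$ into $H \leq Z(\h(p^{n}))$ and $|I| = 1$ into $HZ(\h(p^{n}))/Z(\h(p^{n}))$ being non-trivial cyclic). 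Your version treats all three cases identically, which eliminates the case analysis and the dependence on Corollary~\ref{premain2}; the cost is that you invoke the full strength of Theorem~\ref{main} even in the two cases where the paper gets by with the lighter machinery of Lemma~\ref{premain1} and its corollary. You were also right to flag the only delicate point: well-definedness of $\delta(H) = 3 - |I|$, which Remark~\ref{re-in} (via Corollary~\ref{c:InjectiveForSubgroup}) settles, and since Theorem~\ref{main} and Corollary~\ref{c:InjectiveForSubgroup} precede this lemma and do not depend on it, there is no circularity. Both arguments are sound; yours is cleaner to state, the paper's makes visible that the central and one-generator-mod-center cases are elementary.
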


\begin{proof}
The cases when $\delta(H) = 2$ and $3$ follow from Corollary \ref{premain2} and Lemma~\ref{emptyI}. If $\delta(H) = 1,$ then it follows from (4) of Lemma \ref{P-properties} and Theorem \ref{main} that
\[
p^{2}|C(H)| = |P(H)| = |\ker q||C(q(H))| = \left|\mathbb{Z}_{p}^{3}\right|\left|C(q(H))\right| = p^{3}|C(q(H))|.
\]
\noindent This gives the result.
\end{proof}

\section{The chermak-delgado and pseudo chermak-delgado measure}

\begin{Definition}
The \emph{Chermak-Delgado measure} of a subgroup $H$ of a finite group $G$ is $m(H) = |H||C(H)|.$ The maximum of the set $\{m(H) \, | \, H \le G\}$ is denoted by $m^{\ast}(G)$ and termed the \emph{Chermak-Delgado measure} of $G$ (see \cite{CD} for details).
\end{Definition}

Our goal in this section is to find $m^{\ast}(\h(p^{n})).$ This will be achieved by studying the relationship between $m^{\ast}(\h(p^{n}))$ and the so-called \emph{pseudo Chermak-Delgado measure} of $\h(p^{n}).$ Before we introduce this new concept, we prove a useful lemma which involves a short exact sequence from the earlier part of this paper.

\begin{Lemma}\label{pre-p-n}
For the short exact sequence (\ref{sh-H}) and $H \le \h \left(p^{n - 1}\right),$ we have $$m\left(q^{-1}(H)\right) = p^{4}m(H).$$
\end{Lemma}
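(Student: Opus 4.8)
The plan is to express $m\left(q^{-1}(H)\right) = \left|q^{-1}(H)\right|\,\left|C\left(q^{-1}(H)\right)\right|$ by computing the two factors separately, and then to match the result against $p^{4}m(H) = p^{4}|H|\,|C(H)|$. Both factors will come directly from results already established, so no fresh commutator computation is needed; the argument is essentially bookkeeping once the earlier lemmas are in hand.

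First I would compute $\left|q^{-1}(H)\right|$. Since the sequence (\ref{sh-H}) is exact, $q$ is surjective with $\ker q = \mathrm{im}\,f \cong \mathbb{Z}_{p}^{3}$, so $|\ker q| = p^{3}$. As $\ker q \le q^{-1}(H)$ and the restriction of $q$ to $q^{-1}(H)$ surjects onto $H$ with kernel $\ker q$, the correspondence theorem gives $\left|q^{-1}(H)\right| = |\ker q|\,|H| = p^{3}|H|$.

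Next I would compute $\left|C\left(q^{-1}(H)\right)\right|$. The key observation is that $q\left(q^{-1}(H)\right) = H$ because $q$ is surjective. Lemma~\ref{post-main}, applied to the subgroup $q^{-1}(H) \le \hb{n}$, then yields
\[
\left|C\left(q^{-1}(H)\right)\right| = p^{\delta\left(q^{-1}(H)\right)}\left|C\left(q\left(q^{-1}(H)\right)\right)\right| = p^{\delta\left(q^{-1}(H)\right)}|C(H)|.
\]
To evaluate $\delta\left(q^{-1}(H)\right) = 3 - |I|$ I would invoke Lemma~\ref{inj-size}, which states precisely that any injective set $I$ for $q^{-1}(H)$ satisfies $|I| = 2$. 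Hence $\delta\left(q^{-1}(H)\right) = 1$ and $\left|C\left(q^{-1}(H)\right)\right| = p\,|C(H)|$.

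Combining the two factors gives $m\left(q^{-1}(H)\right) = p^{3}|H| \cdot p\,|C(H)| = p^{4}|H|\,|C(H)| = p^{4}m(H)$, as desired. The real content of the proof is front-loaded into the two cited lemmas, and the hard part is not in the present argument but in having already established Lemma~\ref{inj-size} --- namely that injective sets for a preimage $q^{-1}(H)$ always have cardinality exactly two, so that the center contributes a single factor of $p$ rather than the factor $p^{2}$ or $p^{3}$ that arises for general subgroups. Given that input, the only subtlety here is the elementary but essential identification $q\left(q^{-1}(H)\right) = H$, which allows Lemma~\ref{post-main} to be read with $H$ in place of $q\left(q^{-1}(H)\right)$.
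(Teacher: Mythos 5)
Your proposal is correct and follows essentially the same route as the paper: both compute $\left|q^{-1}(H)\right| = p^{3}|H|$ (the paper via Theorem~\ref{L1}, you via the equivalent kernel-coset counting), then apply Lemma~\ref{post-main} together with Lemma~\ref{inj-size} to get $\delta\left(q^{-1}(H)\right) = 1$ and hence the factor $p^{4}$. Your explicit note that $q\left(q^{-1}(H)\right) = H$ by surjectivity is a detail the paper leaves implicit, but otherwise the arguments coincide.
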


\begin{proof}
Let $K = q^{-1}(H)$. By Theorem~\ref{L1}, $|K| = |K \cap \ker q||q(K)|.$ Now, $\ker q = \hbox{im }f,$ $\hbox{ im }f \leq K,$ and $|q(K)| = |H|.$ Thus,
\begin{equation}\label{e:pre-p-n}
|K| = |K \cap \hbox{im }f||H| = |\hbox{im }f||H| = |\ker q||H| = p^{3}|H|.
\end{equation}
Using Lemma \ref{post-main}, we have
\[
m(K) = |K||C(K)| = p^{3}|H||C(K)| = p^{3}|H|p^{\delta(K)}|C(H)| = p^{3 + \delta(K)}m(H).
\]
By Lemma~\ref{inj-size}, any injective set for $q^{-1}(H)$ has cardinality $2.$ Thus, $\delta(K) = 1$ and the result follows.
\end{proof}

\begin{Definition}
For a short exact sequence (\ref{shes}) and $H \le G$, the \emph{pseudo Chermak-Delgado measure} of $H$, with respect to $q$, denoted by $m_{s}(H; \, q),$
is defined as
\[
m_{s}(H; \, q) = |H||P(H; \, q)|.
\]
The maximum of the set $\{m_{s}(H; \, q) \, | \, H \le G\}$ is denoted by $m^{\ast}_{s}(G; \, q).$
\end{Definition}

\begin{Notation}
If the short exact sequence is fixed, then we write $m_{s}(H)$ and $m^{\ast}_{s}(G)$ for $m_{s}(H; \, q)$ and $m^{\ast}_{s}(G; \, q)$ respectively.
\end{Notation}

The next few results relate the two measures in certain instances.

\begin{Lemma}\label{compare}
For any short exact sequence (\ref{shes}) and $H \le G,$ we have
\[
m_{s}(H) \ge m(H) \hbox{ \ and \ } m_{s}^{\ast}(G) \ge m^{\ast}(G).
\]
\end{Lemma}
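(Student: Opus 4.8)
The plan is to derive both inequalities directly from the containment $C(H) \le P(H)$, which is exactly Lemma~\ref{P-properties} (1) applied to the subgroup $H$ (taking $S = H$). Since centralizers always sit inside pseudocentralizers in any short exact sequence, the subgroup order comparison $|C(H)| \le |P(H)|$ is immediate, and everything else is bookkeeping.

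First I would establish the pointwise inequality $m_s(H) \ge m(H)$. Because $C(H) \le P(H)$, we have $|C(H)| \le |P(H)|$. Multiplying both sides by the positive integer $|H|$ gives
\[
m(H) = |H|\,|C(H)| \le |H|\,|P(H)| = m_s(H),
\]
which holds for every $H \le G$.

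Next I would deduce the inequality between the global maxima. Let $H_{0} \le G$ be a subgroup realizing the Chermak-Delgado measure, so that $m^{\ast}(G) = m(H_{0})$. Applying the pointwise inequality to $H_{0}$ and then using that $m_{s}^{\ast}(G)$ is the maximum of the $m_{s}(H)$ over all subgroups, I obtain
\[
m^{\ast}(G) = m(H_{0}) \le m_{s}(H_{0}) \le m_{s}^{\ast}(G).
\]
This completes the argument.

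There is no genuine obstacle to overcome here: the entire content is the containment $C(H) \le P(H)$, and the passage to maxima is a one-line consequence of comparing a distinguished subgroup against a supremum. The only point requiring a word of care is that the two measures are maximized over the same index set $\{H : H \le G\}$, so that a subgroup optimal for $m$ is automatically a valid competitor for $m_s^{\ast}(G)$; this is transparent from the definitions.
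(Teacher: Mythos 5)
Your proposal is correct and matches the paper's proof, which simply cites Lemma~\ref{P-properties}~(1); you have merely written out the routine bookkeeping (multiplying $|C(H)| \le |P(H)|$ by $|H|$ and comparing a maximizing subgroup against the supremum) that the paper leaves to the reader.
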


\begin{proof}
It follows from Lemma~\ref{P-properties} (1).
\end{proof}

\begin{Lemma}\label{relation1}
For any short exact sequence (\ref{shes}) and $H \le G,$ we have
\[
m_{s}(H) = m(q(H))|G_{1}||\hbox{im } f \cap H|.
\]
\end{Lemma}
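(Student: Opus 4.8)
The plan is to prove the identity $m_s(H) = m(q(H))|G_1||\mathrm{im}\,f \cap H|$ by unwinding both sides into their defining quantities and matching them using the order formulas already established in the excerpt. Recall that $m_s(H) = |H||P(H)|$ by definition, while $m(q(H)) = |q(H)||C(q(H))|$. The key structural input will be Lemma~\ref{P-properties}~(4), which gives $|P(H)| = |G_1||C(q(H))|$, together with Theorem~\ref{L1}, which factors $|H|$ as a product over the short exact sequence.

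First I would substitute the formula from Lemma~\ref{P-properties}~(4) into the definition of $m_s(H)$, obtaining
\[
m_s(H) = |H||P(H)| = |H||G_1||C(q(H))|.
\]
The factor $|G_1|$ and the factor $|C(q(H))|$ already appear on the right-hand side of the desired identity (the latter being part of $m(q(H))$), so the entire problem reduces to showing that $|H| = |q(H)||\mathrm{im}\,f \cap H|$. This is precisely the content of Theorem~\ref{L1}: setting $\widehat{H}_1 = f^{-1}(H \cap \ker q)$ and $\widehat{H}_2 = q(H)$, that theorem gives $|H| = |\widehat{H}_1||\widehat{H}_2|$.

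The remaining step is to identify $|\widehat{H}_1|$ with $|\mathrm{im}\,f \cap H|$. Since $f$ is injective (being part of the short exact sequence), it restricts to a bijection from $f^{-1}(H \cap \ker q)$ onto $f(f^{-1}(H \cap \ker q)) = H \cap \mathrm{im}\,f$, using that $\ker q = \mathrm{im}\,f$. Hence $|\widehat{H}_1| = |H \cap \mathrm{im}\,f| = |\mathrm{im}\,f \cap H|$, and $|\widehat{H}_2| = |q(H)|$ by definition. Combining these,
\[
m_s(H) = \bigl(|q(H)||\mathrm{im}\,f \cap H|\bigr)|G_1||C(q(H))| = |q(H)||C(q(H))| \cdot |G_1||\mathrm{im}\,f \cap H| = m(q(H))|G_1||\mathrm{im}\,f \cap H|,
\]
which is the claimed identity.

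I do not anticipate a genuine obstacle here, as the result is essentially a bookkeeping assembly of two previously proved facts. The only point requiring a small amount of care is the injectivity argument identifying $f^{-1}(H \cap \ker q)$ with $H \cap \mathrm{im}\,f$; one must be explicit that $f$ maps its source bijectively onto its image and that $\ker q = \mathrm{im}\,f$, so that the preimage of $H \cap \ker q$ under $f$ has the same cardinality as $H \cap \mathrm{im}\,f$. Everything else is a direct substitution, so the proof should be short.
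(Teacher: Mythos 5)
Your proposal is correct and follows essentially the same route as the paper's own proof: both substitute Lemma~\ref{P-properties}~(4) into $m_s(H) = |H||P(H)|$ and then factor $|H| = |\hbox{im } f \cap H||q(H)|$ via Theorem~\ref{L1}. Your explicit justification that $\left|f^{-1}(H \cap \ker q)\right| = |\hbox{im } f \cap H|$ (using injectivity of $f$ and $\ker q = \hbox{im } f$) is a detail the paper leaves implicit, but it is the same argument.
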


\begin{proof}
By Theorem~\ref{L1}, we have $|H| = |\ker q \cap H||q(H)| = |\hbox{im }f \cap H||q(H)|.$ Using Lemma~\ref{P-properties} (4), we get $m_{s}(H) = |H||P(H)| = |\hbox{im } f \cap H||q(H)||G_{1}||C(q(H))| = m(q(H))|G_{1}||\hbox{im } f \cap H|.$
\end{proof}

\begin{Corollary}\label{relation2}
For any short exact sequence (\ref{shes}) and $H \le G_{2},$ we have
\[
m_{s}\left(q^{-1}(H)\right) = m(H)|G_{1}|^{2}.
\]
\end{Corollary}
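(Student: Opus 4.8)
The plan is to derive this immediately from Lemma~\ref{relation1}, which already expresses $m_s$ of any subgroup in terms of the Chermak-Delgado measure of its image under $q$. The work reduces to identifying the two factors in that formula when the subgroup is a full preimage $q^{-1}(H)$.

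First I would set $K = q^{-1}(H)$ and apply Lemma~\ref{relation1} to $K$, which gives
\[
m_s(K) = m(q(K))\,|G_1|\,|\hbox{im } f \cap K|.
\]
The remaining task is to evaluate the two quantities $q(K)$ and $\hbox{im } f \cap K$. For the first, surjectivity of $q$ yields $q(q^{-1}(H)) = H$, so $m(q(K)) = m(H)$. For the second, since $1 \in H$ we have $\ker q = q^{-1}(1) \subseteq q^{-1}(H) = K$; as $\hbox{im } f = \ker q$ by exactness of (\ref{shes}), this forces $\hbox{im } f \cap K = \hbox{im } f$. Because $f$ is injective, $|\hbox{im } f| = |G_1|$, so $|\hbox{im } f \cap K| = |G_1|$.

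Substituting these two evaluations back into the formula from Lemma~\ref{relation1} gives
\[
m_s\left(q^{-1}(H)\right) = m(H)\,|G_1|\,|G_1| = m(H)\,|G_1|^2,
\]
as claimed. There is no genuine obstacle here: the only point requiring a moment's care is the containment $\hbox{im } f = \ker q \subseteq q^{-1}(H)$, which collapses the intersection $\hbox{im } f \cap K$ to all of $\hbox{im } f$ and accounts for the \emph{second} factor of $|G_1|$ (the first factor is supplied directly by Lemma~\ref{relation1}). Everything else is a direct substitution, so the corollary is essentially an unwinding of the preceding lemma in the special case of a full preimage.
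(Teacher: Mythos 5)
Your proposal is correct and follows exactly the paper's route: the paper's own proof simply cites Lemma~\ref{relation1} together with the containment $\hbox{im } f \le q^{-1}(H)$, and your write-up just spells out the same two evaluations ($q(q^{-1}(H)) = H$ by surjectivity of $q$, and $|\hbox{im } f \cap q^{-1}(H)| = |\hbox{im } f| = |G_{1}|$ by injectivity of $f$) that the paper leaves implicit.
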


\begin{proof}
It follows from Lemma \ref{relation1} and the fact that $\hbox{im } f \le q^{-1}(H).$
\end{proof}

\begin{Lemma}\label{p-n}
For the short exact sequence (\ref{sh-H}) and any $H \le\h(p^{n}),$ we have
\[
m_{s}(H) = p^{3-\delta(H)}m(H).
\]
\end{Lemma}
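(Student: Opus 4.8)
The plan is to cancel the common factor $|H|$ shared by both measures and thereby reduce the claimed identity to a statement relating $|P(H)|$ and $|C(H)|$. Since $m_{s}(H) = |H||P(H)|$ and $m(H) = |H||C(H)|$, the equality $m_{s}(H) = p^{3-\delta(H)}m(H)$ is equivalent to $|P(H)| = p^{3-\delta(H)}|C(H)|$. By Definition~\ref{d:Delta}, any injective set $I$ for $H$ satisfies $\delta(H) = 3 - |I|$, and by Remark~\ref{re-in} the cardinality $|I|$ is independent of the choice of $I$; hence $3 - \delta(H) = |I|$ and the target becomes the uniform identity $|P(H)| = p^{|I|}|C(H)|$.

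Next I would invoke Theorem~\ref{main}, which is precisely tailored to this computation. Fixing a special generating set $S$ for $H$ together with an injective set $I$ for $S$ (which exists by Lemma~\ref{l:ContainsInejectiveSet}), the three cases $|I| = 0$, $|I| = 1$, and $|I| = 2$ of that theorem give $|P(H)| = |C(H)|$, $|P(H)| = p|C(H)|$, and $|P(H)| = p^{2}|C(H)|$ respectively. In each case this reads exactly as $|P(H)| = p^{|I|}|C(H)|$, so the identity holds regardless of the value of $|I|$.

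Multiplying both sides by $|H|$ then yields
\[
m_{s}(H) = |H||P(H)| = |H|\,p^{|I|}|C(H)| = p^{3-\delta(H)}m(H),
\]
as desired. The point to emphasize is that there is essentially no new obstacle here: all of the genuine work — the case analysis on the size of an injective set and the explicit construction of the pseudocentralizer elements $z_{1}, \, z_{2}$ — was already carried out in Theorem~\ref{main}. This lemma simply repackages that result through the bookkeeping function $\delta$, and its only subtlety is the appeal to Remark~\ref{re-in}, which guarantees that $\delta(H)$, and hence the exponent $3 - \delta(H) = |I|$, is well-defined independently of the chosen special generating set for $H$.
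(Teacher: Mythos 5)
Your proof is correct, but it takes a slightly different path than the paper. You derive the key identity $|P(H)| = p^{3-\delta(H)}|C(H)|$ directly from Theorem~\ref{main}, using $\delta(H) = 3 - |I|$ and the well-definedness of $|I|$ (Remark~\ref{re-in}); the paper instead factors through the quotient group: it combines Lemma~\ref{P-properties}~(4), which gives $|P(H)| = \left|\Z_{p}^{3}\right||C(q(H))| = p^{3}|C(q(H))|$, with Lemma~\ref{post-main}, which gives $|C(H)| = p^{\delta(H)}|C(q(H))|$, so that $m_{s}(H) = |H|\,p^{3}p^{-\delta(H)}|C(H)| = p^{3-\delta(H)}m(H)$. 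The two routes are logically close --- Lemma~\ref{post-main} is itself proved from Theorem~\ref{main} in the case $\delta(H) = 1$ and from Corollary~\ref{premain2} together with Lemma~\ref{emptyI} when $\delta(H) = 2$ or $3$ --- but they are not identical citations: your version bypasses $C(q(H))$ entirely and is arguably more direct, needing only the uniform reading $|P(H)| = p^{|I|}|C(H)|$ of Theorem~\ref{main}. What the paper's detour through $|C(q(H))|$ buys is consistency with the surrounding computations in Sections 6 and 7 (Lemmas~\ref{pre-p-n} and \ref{general-p-n} use the same template of comparing orders across the short exact sequence), which is why it is phrased that way. Your appeal to Remark~\ref{re-in} for the well-definedness of the exponent is exactly the right subtlety to flag, and your cancellation of the common factor $|H|$ is the same trivial final step as in the paper.
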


\begin{proof}
By Lemmas \ref{P-properties} (4) and \ref{post-main}, we have

$m_{s}(H) = |H||P(H)| = |H|\left|\Z_{p}^{3}\right||C(q(H))| = |H|p^{3}p^{-\delta(H)}|C(H)| = p^{3-\delta(H)}m(H).$
\end{proof}

\begin{Lemma}\label{general-p-n}
For the short exact sequence (\ref{sh-H}) and $H \le \h \left(p^{n - 1}\right),$ we have
\[
m_{s}\left(q^{-1}(H)\right) = p^{6}m(H).
\]
\end{Lemma}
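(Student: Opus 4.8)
The plan is to apply Corollary~\ref{relation2} essentially verbatim to the specific short exact sequence (\ref{sh-H}). That corollary states that for any short exact sequence (\ref{shes}) and any $H \le G_{2}$ one has $m_{s}\left(q^{-1}(H)\right) = m(H)|G_{1}|^{2}$. In our situation $G_{2} = \h\left(p^{n-1}\right)$, so the hypothesis $H \le \h\left(p^{n-1}\right)$ is exactly $H \le G_{2}$, and $G_{1} = \mathbb{Z}_{p}^{3}$, whence $|G_{1}| = p^{3}$.

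First I would invoke Corollary~\ref{relation2} to write $m_{s}\left(q^{-1}(H)\right) = m(H)|G_{1}|^{2}$. Then I would simply substitute $|G_{1}| = \left|\mathbb{Z}_{p}^{3}\right| = p^{3}$, so that $|G_{1}|^{2} = p^{6}$, yielding $m_{s}\left(q^{-1}(H)\right) = p^{6}m(H)$. There is no genuine obstacle here: the work was already done in establishing Corollary~\ref{relation2} (via Lemma~\ref{relation1} and the inclusion $\hbox{im } f \le q^{-1}(H)$), and this lemma is just the specialization of that general identity to the Heisenberg short exact sequence.

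If one prefers to keep the argument internal to this section rather than appealing to the general Corollary~\ref{relation2}, an equally short alternative is available by combining Lemma~\ref{pre-p-n} and Lemma~\ref{p-n}. Setting $K = q^{-1}(H)$, Lemma~\ref{inj-size} gives that every injective set for $K$ has cardinality $2$, so $\delta(K) = 3 - 2 = 1$. Lemma~\ref{p-n} then yields $m_{s}(K) = p^{3 - \delta(K)}m(K) = p^{2}m(K)$, and Lemma~\ref{pre-p-n} gives $m(K) = p^{4}m(H)$; multiplying produces $m_{s}(K) = p^{2}\cdot p^{4}m(H) = p^{6}m(H)$. Either route is a one-line computation, so I would present the shortest one, namely the direct application of Corollary~\ref{relation2}.
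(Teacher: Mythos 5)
Your proposal is correct and in substance matches the paper's own proof: the paper verifies the identity by the inline computation $m_{s}\left(q^{-1}(H)\right) = \left|q^{-1}(H)\right|\left|P\left(q^{-1}(H)\right)\right| = p^{3}|H|\cdot p^{3}|C(H)| = p^{6}m(H)$ using Lemma~\ref{P-properties}~(4) and Equation~(\ref{e:pre-p-n}), which is exactly the computation that Corollary~\ref{relation2} packages in general, so your direct specialization with $|G_{1}| = \left|\mathbb{Z}_{p}^{3}\right| = p^{3}$ is the same argument stated more economically. Your fallback route via Lemmas~\ref{inj-size}, \ref{pre-p-n}, and \ref{p-n} is also valid and non-circular, though it needlessly invokes the heavier Lemma~\ref{inj-size}, which the primary route (like the paper's proof) avoids.
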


\begin{proof}
By Lemma \ref{P-properties} (4) and Equation (\ref{e:pre-p-n}), we have

$m_{s}\left(q^{-1}(H)\right) = \left|q^{-1}(H)\right|\left|P\left(q^{-1}(H)\right)\right| = p^{3}\left|H\right|\left|\mathbb{Z}_{p}^{3}\right|\left|C(H)\right| = p^{6}m(H).$
\end{proof}

\section{the chermak-delgado and pseudo chermak-delgado lattice}

For any finite group $G$, let $\ch(G) = \{H \le G \, | \, m^{\ast}(G) = m(H) \}$ and, for a fixed short exact sequence (\ref{shes}), $\pc(G) = \{H \le G \, | \, m_{s}^{\ast}(G) = m_{s}(H)\}$ . Suppose that $H, \, K \in \ch(G).$ It is known that $\ch(G)$ is a lattice, called the \emph{Chermak-Delgado Lattice}, with the meet operation defined as $H \wedge K = H \cap K$ and the join operation defined as $H \vee K = HK.$ One can show that $HK = \left<H, \, K\right>$ in this situation. Furthermore, the centralizer $C$ is a lattice automorphism of $\ch(G).$ Thus, if $H \leq \ch(G),$ then $C(H) \leq \ch(G).$ See \cite{CD}, \cite{W}, and \cite{IMI} for details.

We omit the proof of the next theorem, which is essentially the same as that of Lemma 1.1 given in \cite{CD}, except that one uses the properties of the pseudocentralizer along with commutator calculus.

\begin{Theorem}
For any short exact sequence (\ref{shes}), $\pc(G)$ is a lattice with operations $\cap$ and $\left<\;,\;\right>$ and $P$ (the pseudocentralizer) is a lattice automorphism of $\pc(G)$.
\end{Theorem}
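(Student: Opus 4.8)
The plan is to transcribe the classical Chermak--Delgado argument (Lemma 1.1 of \cite{CD}) structurally, replacing the centralizer $C$ by the pseudocentralizer $P$ and the measure $m$ by $m_{s}$. The only group-theoretic inputs used in \cite{CD} are the three ``duality'' properties of $C$, and each has an exact analogue for $P$ already recorded in Lemma~\ref{P-properties}. The engine of the whole proof is the fundamental product inequality
\[
m_{s}(H)\,m_{s}(K)\ \le\ m_{s}(H\cap K)\,m_{s}(\langle H,K\rangle)\qquad(H,K\le G).
\]
First I would prove this. For the order factors one uses the elementary set-counting identity $|H||K|=|H\cap K|\,|HK|$ together with $HK\subseteq\langle H,K\rangle$, giving $|H||K|\le|H\cap K|\,|\langle H,K\rangle|$. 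For the pseudocentralizer factors, note $\langle H,K\rangle=\langle HK\rangle$, so Lemma~\ref{P-properties} (6) and (7) yield $P(\langle H,K\rangle)=P(H)\cap P(K)$, while Lemma~\ref{P-properties} (8) gives $P(H)P(K)\subseteq P(H\cap K)$. Combining the counting identity $|P(H)||P(K)|=|P(H)\cap P(K)|\,|P(H)P(K)|$ with these two facts produces $|P(H)||P(K)|\le|P(\langle H,K\rangle)|\,|P(H\cap K)|$. Multiplying the two estimates gives the inequality.

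Next I would deduce the lattice structure by the standard squeeze. If $H,K\in\pc(G)$ then $m_{s}(H)=m_{s}(K)=m_{s}^{\ast}(G)$, so the left side of the inequality is $m_{s}^{\ast}(G)^{2}$; since each factor on the right is at most $m_{s}^{\ast}(G)$ by maximality, both factors must equal $m_{s}^{\ast}(G)$. Hence $H\cap K$ and $\langle H,K\rangle$ lie in $\pc(G)$, so $\pc(G)$ is closed under $\cap$ and $\langle\,,\,\rangle$ and is therefore a finite lattice with these meet and join operations.

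It remains to show that $P$ is a lattice automorphism of $\pc(G)$, and this is where the commutator calculus enters; I expect the involution identity $P(P(H))=H$ on $\pc(G)$ to be the main obstacle. The plan is first to establish the \emph{unconditional} inclusion $H\le P(P(H))$: for $h\in H$ and any $x\in P(H)$ one has $[h,x]\in\ker q$ by definition of $P(H)$, and since $\ker q$ is a subgroup, $[x,h]=[h,x]^{-1}\in\ker q$, whence $h\in P(P(H))$. Consequently $m_{s}(P(H))=|P(H)|\,|P(P(H))|\ge|P(H)|\,|H|=m_{s}(H)$; for $H\in\pc(G)$, maximality of $m_{s}^{\ast}(G)$ forces equality, which simultaneously places $P(H)$ in $\pc(G)$ and upgrades the inclusion to $P(P(H))=H$. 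Thus $P$ restricts to an involutive bijection of $\pc(G)$. Finally, to see that $P$ interchanges the operations, recall $P(\langle H,K\rangle)=P(H)\cap P(K)$ from above; applying this to the pair $P(H),P(K)\in\pc(G)$ gives $P(\langle P(H),P(K)\rangle)=P(P(H))\cap P(P(K))=H\cap K$, and applying $P$ once more, together with the involution on the lattice element $\langle P(H),P(K)\rangle$, yields $P(H\cap K)=\langle P(H),P(K)\rangle$. Therefore $P$ is an order-reversing involution that swaps meet and join, i.e.\ a lattice automorphism of the self-dual lattice $\pc(G)$, completing the proof.
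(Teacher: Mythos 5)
Your proposal is correct and takes exactly the approach the paper intends: the paper omits its proof, stating that it is essentially Lemma 1.1 of \cite{CD} with the centralizer replaced by the pseudocentralizer via the properties in Lemma~\ref{P-properties}, which is precisely the transcription you carried out. The product inequality, the squeeze argument, the unconditional inclusion $H \le P(P(H))$ upgraded to $P(P(H)) = H$ on $\pc(G)$ by maximality, and the swap of $\cap$ and $\left<\;,\;\right>$ all check out.
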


\begin{Theorem}\label{inv-image}
For any short exact sequence (\ref{shes}), we have $\pc(G) = q^{-1}(\ch(G_{2})).$
\end{Theorem}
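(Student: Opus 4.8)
The plan is to show that $m_{s}^{\ast}(G) = m^{\ast}(G_{2})|G_{1}|^{2}$ and that this maximum is attained by a subgroup $H \le G$ precisely when $\ker q \le H$ and $q(H) \in \ch(G_{2})$; the correspondence theorem will then identify the collection of such $H$ with $q^{-1}(\ch(G_{2}))$. The main engine is Lemma~\ref{relation1}, which gives $m_{s}(H) = m(q(H))|G_{1}||\hbox{im }f \cap H|$ for every $H \le G$.

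First I would establish a uniform upper bound. For any $H \le G$ we have $m(q(H)) \le m^{\ast}(G_{2})$ by definition of $m^{\ast}(G_{2})$, and $|\hbox{im }f \cap H| \le |\hbox{im }f| = |G_{1}|$ because $f$ is injective. Substituting into Lemma~\ref{relation1} yields
\[
m_{s}(H) \le m^{\ast}(G_{2})|G_{1}|^{2}.
\]
Moreover, equality holds if and only if both factors are maximal, i.e.\ $m(q(H)) = m^{\ast}(G_{2})$ (equivalently $q(H) \in \ch(G_{2})$) and $|\hbox{im }f \cap H| = |G_{1}|$. Since $\hbox{im }f \cap H \le \hbox{im }f$ and both are finite, this last equality is equivalent to $\hbox{im }f \cap H = \hbox{im }f$, that is, to $\ker q = \hbox{im }f \le H$.

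Next I would check that the bound is attained, so that $m_{s}^{\ast}(G) = m^{\ast}(G_{2})|G_{1}|^{2}$. As $G_{2}$ is finite, $\ch(G_{2}) \neq \emptyset$; choosing any $K \in \ch(G_{2})$, Corollary~\ref{relation2} gives $m_{s}(q^{-1}(K)) = m(K)|G_{1}|^{2} = m^{\ast}(G_{2})|G_{1}|^{2}$, so the maximum is realized. Combining this with the equality analysis above, $H \in \pc(G)$ if and only if $\ker q \le H$ and $q(H) \in \ch(G_{2})$. Now I invoke the correspondence theorem: if $H \in \pc(G)$, then $\ker q \le H$ forces $H = q^{-1}(q(H))$, exhibiting $H$ as $q^{-1}(K)$ with $K = q(H) \in \ch(G_{2})$; conversely, for any $K \in \ch(G_{2})$ the subgroup $q^{-1}(K)$ contains $\ker q$ and satisfies $q(q^{-1}(K)) = K \in \ch(G_{2})$ by surjectivity of $q$, so $q^{-1}(K) \in \pc(G)$. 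Hence $\pc(G) = q^{-1}(\ch(G_{2}))$.

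The argument is driven almost entirely by Lemma~\ref{relation1} and Corollary~\ref{relation2}, so no step is genuinely difficult. The only point requiring care is the equality analysis of the inequality, specifically verifying that $|\hbox{im }f \cap H| = |G_{1}|$ is equivalent to $\ker q \le H$, together with the clean application of the correspondence theorem, which relies on $q$ being surjective with $\ker q = \hbox{im }f$.
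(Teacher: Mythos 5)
Your proof is correct, and while it runs on the same two engines as the paper's proof (Lemma~\ref{relation1} and Corollary~\ref{relation2}), it is organized along a genuinely different route. The paper argues by double containment: for $q^{-1}(\ch(G_{2})) \subseteq \pc(G)$ it compares an arbitrary $K \in \pc(G)$ against $q^{-1}(H)$ with $H \in \ch(G_{2})$; for the reverse containment it first shows $q(K) \in \ch(G_{2})$ by contradiction, then extracts $|\hbox{im }f \cap K| = |G_{1}|$ from the equality $m_{s}(K) = m_{s}\bigl((q^{-1}\circ q)(K)\bigr)$, and finally concludes $K = (q^{-1}\circ q)(K)$ by an order count that invokes Theorem~\ref{L1}. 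You instead pin down the exact maximum $m_{s}^{\ast}(G) = m^{\ast}(G_{2})|G_{1}|^{2}$ up front and perform an equality analysis on the product in Lemma~\ref{relation1}: since $m(q(H)) \le m^{\ast}(G_{2})$ and $|\hbox{im }f \cap H| \le |G_{1}|$ with all factors positive integers, the bound is attained exactly when $q(H) \in \ch(G_{2})$ and $\ker q \le H$ (your reduction of $|\hbox{im }f \cap H| = |G_{1}|$ to $\ker q \le H$, via finiteness and the injectivity of $f$, is the one point needing care, and you handle it correctly). The correspondence theorem, in the form $\ker q \le H \Rightarrow H = H\ker q = q^{-1}(q(H))$, then delivers both inclusions simultaneously. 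What your route buys: an explicit formula for $m_{s}^{\ast}(G)$, an intrinsic membership criterion for $\pc(G)$ ($\ker q \le H$ and $q(H) \in \ch(G_{2})$), and the complete elimination of Theorem~\ref{L1}, whose counting role is absorbed by the standard correspondence theorem; the paper's route, by contrast, never needs to state the maximum value explicitly and stays entirely within comparisons of measures.
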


\begin{proof}
Let $H \in \ch(G_{2}).$ If $K \in \pc(G),$ then $m(q(K)) \le m(H)$ and $m_{s}(K) \geq m_{s}\left(q^{-1}(H)\right).$ By Lemma \ref{relation1} and Corollary \ref{relation2}, we have
\[
m_s(K) = m(q(K))|G_{1}||\hbox{im } f \cap K | \le m(H)|G_{1}|^{2} = m_{s}\left(q^{-1}(H)\right).
\]
This means that $m_{s}\left(q^{-1}(H)\right) = m_{s}(K)$ and, thus, $q^{-1}(H) \in \pc(G).$ Therefore, $q^{-1}(\ch(G_{2})) \subseteq \pc(G)$.

Suppose now that $K \in \pc(G).$ We prove that $K \in q^{-1}(\ch(G_{2}))$ by first showing that $q(K)\in \ch(G_{2}).$ If $q(K) \notin \ch(G_{2}),$ then for any $H \in \ch(G_{2})$ we have $m(q(K)) < m(H).$ By Lemma \ref{relation1} and Corollary \ref{relation2} again, we have $m_{s}(q^{-1}(H)) > m_{s}(K).$ This is a contradiction since $m_{s}(K) \ge m_{s}(q^{-1}(H)).$ Thus, $q(K)\in \ch(G_{2}).$
Now, $m_{s}(K) = m_{s}(q^{-1} \circ q)(K)$ because the first part of the proof gives that\\ $(q^{-1}\circ q)(K)\in \pc(G).$ By Lemma \ref{relation1} and Corollary \ref{relation2}, we have
\[
m(q(K)){|G_{1}||\hbox{im }f \cap K|} = m_{s}(K) = m_{s}((q^{-1}\circ q)(K)) = m(q(K))|G_{1}|^{2}.
\]
This means that $|\hbox{im }f \cap K| = |G_{1}|.$ Hence, $\hbox{im }f \le K.$ By Theorem \ref{L1}, $|K| = |G_{1}||q(K)|$ and $|(q^{-1}\circ q)(K)| = |G_{1}||q(K)|.$ The second equality follows from noting that $q \left((q^{-1} \circ q)(K) \right) = q(K)$ and recalling that (\ref{shes}) is exact; so that
$G_{1} = f^{-1} ( (q^{-1} \circ q )(K) \cap \ker q ).$ However, $K \le (q^{-1}\circ q)(K)$ and, consequently, $K = (q^{-1}\circ q)(K).$ Since $q(K) \in \ch(G_{2}),$ we have $K \in q^{-1}(\ch(G_{2})).$
\end{proof}

The first step in computing $m^{\ast}(\h(p^{n}))$ is to find $m^{\ast}(\h(p)).$ This is done using standard group theory.

\begin{Lemma}\label{measure-p}
For any prime $p,$ we have $m^{\ast}(\h(p)) = p^{4}.$
\end{Lemma}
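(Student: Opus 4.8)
The plan is to exploit the symplectic structure on the quotient $\h(p)/Z(\h(p))$. First I would record two facts directly from Lemma~\ref{conmutador}: the group $\h(p)$ has order $p^{3}$, while $Z(\h(p)) = [\h(p), \, \h(p)] = \{(0, \, x, \, 0) \mid x \in \Z_p\}$ has order $p$, so that $\h(p)/Z(\h(p)) \cong \Z_p^{2}$. By Lemma~\ref{conmutador}(1), the commutator $[a, \, b] = (0, \, a|_1 b|_3 - a|_3 b|_1, \, 0)$ depends only on the first and third components of $a$ and $b$, hence only on their images $\bar a, \, \bar b$ in the quotient. It therefore induces the alternating bilinear form $\langle \bar a, \, \bar b\rangle = a|_1 b|_3 - a|_3 b|_1$ on $\Z_p^{2}$, whose matrix is $\left(\begin{smallmatrix} 0 & 1 \\ -1 & 0\end{smallmatrix}\right)$; since its determinant is $1$, the form is non-degenerate for every prime $p$.

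Next, for an arbitrary subgroup $H \le \h(p)$, I would set $\bar H = H Z(\h(p))/Z(\h(p)) \le \Z_p^{2}$ and put $d = \dim_{\Z_p} \bar H \in \{0, \, 1, \, 2\}$. Using Corollary~\ref{C}, an element $g$ lies in $C(H)$ if and only if $\langle \bar h, \, \bar g\rangle = 0$ for every $h \in H$; because the central subgroup $Z(\h(p))$ is always contained in $C(H)$, this identifies $C(H)/Z(\h(p))$ exactly with the orthogonal complement $\bar H^{\perp}$. Non-degeneracy gives $\dim \bar H^{\perp} = 2 - d$, and therefore $|C(H)| = |Z(\h(p))|\cdot|\bar H^{\perp}| = p^{1 + (2 - d)} = p^{3 - d}$.

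Then I would compute $|H|$. Since $|Z(\h(p))| = p$ is prime, $H \cap Z(\h(p))$ is either trivial or all of $Z(\h(p))$, so $|H \cap Z(\h(p))| = p^{\epsilon}$ with $\epsilon \in \{0, \, 1\}$, and consequently $|H| = |H \cap Z(\h(p))|\,|\bar H| = p^{\epsilon + d}$. Multiplying the two counts yields
\[
m(H) = |H|\,|C(H)| = p^{\epsilon + d}\,p^{3 - d} = p^{3 + \epsilon} \le p^{4},
\]
with equality precisely when $\epsilon = 1$, i.e.\ when $Z(\h(p)) \le H$. Finally I would exhibit equality: taking $H = Z(\h(p))$ gives $C(H) = \h(p)$, so $m(H) = p\cdot p^{3} = p^{4}$ (the choice $H = \h(p)$ works equally well). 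This establishes $m^{\ast}(\h(p)) = p^{4}$.

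The computation is essentially routine, so I do not anticipate a genuine obstacle; the one point meriting care is the equality $C(H)/Z(\h(p)) = \bar H^{\perp}$, rather than a mere inclusion. This is where Lemma~\ref{conmutador}(1) is essential, since it guarantees that the defining condition $[h, \, g] = 1$ factors through the quotient and hence cuts out exactly the full preimage of $\bar H^{\perp}$. Everything else reduces to the dimension count $\dim \bar H + \dim \bar H^{\perp} = 2$ for a non-degenerate form on a two-dimensional space. As a sanity check, one may alternatively verify the bound by a short case analysis on $|H| \in \{1, \, p, \, p^{2}, \, p^{3}\}$, which reproduces the same values of $m(H)$.
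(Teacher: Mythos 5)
Your proof is correct, but it takes a genuinely different route from the paper. The paper's own argument is a short case analysis on $|K| \in \{1, \, p, \, p^{2}, \, p^{3}\}$: it computes $m(1), \, m(Z(\h(p))),$ and $m(\h(p))$ directly, and for the remaining subgroups uses the double-centralizer observation that $C(K) = \h(p)$ would force $K \le C(C(K)) = Z(\h(p)),$ so $C(K)$ is proper and hence $m(K) \le p^{4}$ when $|K| = p^{2}$ (and $m(K) \le p^{3}$ when $|K| = p$ and $K \neq Z(\h(p))$). You instead note that by Lemma~\ref{conmutador}(1) the commutator descends to the standard non-degenerate alternating form on $\h(p)/Z(\h(p)) \cong \Z_{p}^{2}$ and that $C(H)$ is exactly the preimage of $\bar H^{\perp},$ which yields the uniform formula $m(H) = p^{3 + \epsilon}$ with $p^{\epsilon} = |H \cap Z(\h(p))|.$ This buys strictly more than the lemma asserts: it shows $m(H) = p^{4}$ precisely when $Z(\h(p)) \le H,$ i.e., it identifies the whole lattice $\ch(\h(p))$ as the interval of subgroups above the center, whereas the paper's argument only extracts the maximum. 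The trade-off is that your dimension count $\dim \bar H + \dim \bar H^{\perp} = 2$ depends on $\Z_{p}$ being a field; over $\Z_{p^{n}}$ the quotient $\h(p^{n})/Z(\h(p^{n})) \cong \Z_{p^{n}}^{2}$ is only a module, where this counting breaks down, which is in effect why the paper develops the pseudocentralizer machinery for the general case rather than a symplectic argument. One cosmetic remark: your subcase $\epsilon = 0,$ $d = 2$ never actually occurs (a group of order $p^{2}$ is abelian, while non-degeneracy of the form --- which in dimension two is just the determinant of coordinates --- forces representatives of two independent cosets not to commute), but since your formula is derived uniformly for any subgroup that exists, this does not affect the validity of the bound.
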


\begin{proof}
Set $Z = Z(\h(p))$ and $\h = \h(p).$ Note that $m(1) = p^{3},$ $m(\h) = p^{4},$ and $m(Z) = \vert Z \vert \vert \h \vert = pp^{3} = p^{4}.$ Let $K \neq Z$ be a proper non-trivial subgroup of $\h.$ Then either $\vert K \vert = p^{2}$ or $\vert K \vert = {p}.$

\vspace{.05in}

\noindent (i) \ If $\vert K \vert = p^{2},$ then $C(K)$ is properly contained in $\h.$ Otherwise, we would have $K \leq C(C(K)) = Z,$ which is impossible. Thus, $m(K) \leq p^{4}.$

\vspace{.05in}

\noindent (ii) \ If $\vert K \vert = p,$ then $C(K)$ is properly contained in $\h$ since $K \neq Z.$ Hence, $m(K) \leq p^{3}.$
\end{proof}

We now arrive at the main theorem of this paper.

\begin{Theorem}\label{MAIN}
For any prime $p,$ we have $m^{\ast}(\hb{n}) = p^{4n}.$
\end{Theorem}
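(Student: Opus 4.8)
The plan is to argue by induction on $n$. The base case $n = 1$ is exactly Lemma~\ref{measure-p}, which gives $m^{\ast}(\h(p)) = p^{4} = p^{4\cdot 1}$. For the inductive step I assume $m^{\ast}\bigl(\hb{n-1}\bigr) = p^{4(n-1)}$ and work throughout with the short exact sequence (\ref{sh-H}). The lower bound is immediate: by the inductive hypothesis there is $H \le \hb{n-1}$ with $m(H) = p^{4(n-1)}$, and Lemma~\ref{pre-p-n} gives $m\bigl(q^{-1}(H)\bigr) = p^{4}m(H) = p^{4n}$, so $m^{\ast}\bigl(\hb{n}\bigr) \ge p^{4n}$.

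The bulk of the work is the upper bound $m(K) \le p^{4n}$ for every $K \le \hb{n}$. I would fix an injective set $I$ for $K$ and split into the three cases $|I| = 0, 1, 2$, which exhaust all possibilities by Lemma~\ref{Isize}. When $|I| = 0$, Lemma~\ref{emptyI} says $K$ is central, so $C(K) = \hb{n}$ and $m(K) = |K|p^{3n} \le p^{n}p^{3n} = p^{4n}$ because $|K| \le |Z(\hb{n})| = p^{n}$. When $|I| = 2$, one has $\delta(K) = 1$, so Lemma~\ref{p-n} gives $m_{s}(K) = p^{2}m(K)$, while Lemma~\ref{relation1} together with the inductive hypothesis gives
\[
m_{s}(K) = m(q(K))\,p^{3}\,|\hbox{im }f \cap K| \le p^{4(n-1)}\cdot p^{3}\cdot p^{3} = p^{4n+2},
\]
using $m(q(K)) \le m^{\ast}\bigl(\hb{n-1}\bigr)$ and $|\hbox{im }f \cap K| \le |\hbox{im }f| = p^{3}$. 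Hence $m(K) \le p^{4n}$.

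The remaining case $|I| = 1$ is the crux, and I would handle it by a direct structural computation rather than by the pseudo-measure. Here $K$ is generated by a single non-central element $h$ together with at most one central element, so $C(K) = C(h)$ and $K \le \langle h \rangle Z(\hb{n})$. Writing $h = \bigl(r_{1}p^{k_{1}}, \, \ast, \, r_{3}p^{k_{3}}\bigr)$ with $\nu = \nu(h) = \min\{k_{1}, k_{3}\}$, solving the centralizer condition $h|_{1}g|_{3} \equiv h|_{3}g|_{1} \pmod{p^{n}}$ from Corollary~\ref{C} shows that $g|_{1}, g|_{2}$ are free while $g|_{3}$ is pinned down modulo $p^{\,n-\nu}$, so $|C(h)| = p^{2n+\nu}$. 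On the other hand, the image of $h$ in $\hb{n}/Z(\hb{n}) \cong \Z_{p^{n}}^{2}$ has order $p^{\,n-\nu}$, whence $|K| \le \bigl|\langle h\rangle Z(\hb{n})\bigr| = p^{2n-\nu}$. Multiplying gives $m(K) \le p^{2n-\nu}\,p^{2n+\nu} = p^{4n}$. Combining the three cases yields $m^{\ast}\bigl(\hb{n}\bigr) \le p^{4n}$, which with the lower bound closes the induction.

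The hard part is precisely this $|I| = 1$ case: the naive combination of Lemmas~\ref{p-n} and~\ref{relation1} that works so cleanly for $|I| = 2$ degrades to the estimate $m(K) \le p^{4n+1}$ when $|I| = 1$ (those bounds are tight only in the $|I| = 2$ regime, i.e. for full preimages $q^{-1}(H)$), so the factor of $p$ must be recovered by hand. The explicit count $|C(h)| = p^{2n+\nu}$ against $|K| \le p^{2n-\nu}$ — in which the two powers of $\nu$ cancel exactly — is the delicate point and the step I expect to require the most care to state correctly across the degenerate and non-degenerate subcases for $h$.
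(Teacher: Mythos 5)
Your proof is correct, but it takes a genuinely different route from the paper at the decisive step. The paper never bounds $m(K)$ subgroup by subgroup: it first sandwiches $p^{4n} \le m^{\ast}(\hb{n}) \le m_{s}^{\ast}(\hb{n}) = p^{4n+2}$ (via Lemma~\ref{compare}, Theorem~\ref{inv-image}, and Lemma~\ref{general-p-n}), writes $m^{\ast}(\hb{n}) = p^{4n+i}$, and then eliminates $i = 1, 2$ by contradiction: any $K \in \ch(\hb{n})$ would have $\delta(K) \in \{2, 3\}$ by Lemma~\ref{p-n}; $\delta(K) = 3$ forces $K$ central and $m(K) \le p^{4n}$, while $\delta(K) = 2$ forces $m_{s}(K) = p^{4n+2} = m_{s}^{\ast}(\hb{n})$, so $K \in \pc(\hb{n})$, whence $K$ is a full preimage $q^{-1}\bigl(\widehat{K}\bigr)$ by Theorem~\ref{inv-image} and has an injective set of cardinality $2$ by Lemma~\ref{inj-size}, contradicting $\delta(K) = 2$. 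You instead prove the sharp pointwise bound $m(K) \le p^{4n}$ for every $K \le \hb{n}$, and in the crux case $|I| = 1$ you replace the entire lattice apparatus (the identity $\pc(G) = q^{-1}(\ch(G_{2}))$ and Lemma~\ref{inj-size}) with an explicit order count: with $S^{\dagger} = \{h\}$ (forced by Lemma~\ref{Isize}) one has $C(K) = C(h)$ and $K \le \left<h\right>Z(\hb{n})$, and I verified your counts $|C(h)| = p^{2n+\nu(h)}$ and $\bigl|\left<h\right>Z(\hb{n})\bigr| = p^{2n-\nu(h)}$ in all subcases of Notation~\ref{n:Factored_h} (including $r_{1} = 0$ or $r_{3} = 0$, where the pinned coordinate switches and the conventions $k_{i} = n$ take over) — the $\nu$-cancellation is exact, and your diagnosis is also accurate that the naive combination of Lemmas~\ref{p-n} and~\ref{relation1} degrades to $m(K) \le p^{4n+1}$ precisely in this case, so the recovered factor of $p$ genuinely requires the hand computation. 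As for what each approach buys: the paper's argument is more structural — Theorem~\ref{inv-image} holds for arbitrary short exact sequences and yields the corollary $\pc(\h(p^{n})) \subseteq \ch(\h(p^{n}))$ essentially for free — whereas yours is more elementary and self-contained, needs the inductive hypothesis only in the $|I| = 2$ case (your lower bound via Lemma~\ref{pre-p-n} is fine, though the paper gets it induction-free from $m(Z(\hb{n})) = p^{4n}$), and proves the stronger statement that \emph{every} subgroup with $\delta(K) = 2$ satisfies $m(K) \le p^{4n}$, a fact the paper only obtains indirectly by excluding such $K$ from $\ch(\hb{n})$.
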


\begin{proof}
We proceed by induction on $n.$ The base step ($n = 1$) follows from Lemma \ref{measure-p}. Suppose that the theorem is true for all values up to $n - 1.$ Since $Z(\hb{n})$ is isomorphic to $\mathbb Z_{p^{n}},$ $m(Z(\hb{n}) = p^{4n}.$ Hence, $m^{\ast}(\hb{n})\ge p^{4n}$.

By Theorem \ref{inv-image}, for any $H \in \ch(\hb{n - 1}),$ we know that
\begin{equation}\label{e:Inverseimage}
m_{s}^{\ast}\left(\hb{n}\right) = m_{s}\left(q^{-1}(H)\right).
\end{equation}
Combining (\ref{e:Inverseimage}), Lemmas \ref{general-p-n} and \ref{compare}, and the induction hypothesis, gives us
\begin{equation} \label{e:PseudoInequality}
m^{\ast}\left(\hb{n}\right) \le m^{\ast}_{s}\left(\hb{n}\right) = m_{s}\left(q^{-1}(H)\right) = p^{6}m(H) = p^{6}p^{4n - 4} = p^{4n + 2}.
\end{equation}
We conclude that $p^{4n} \leq m^{\ast}(\hb{n}) \leq p^{4n + 2}.$ Hence, $m^{\ast}(\hb{n}) = p^{4n + i}$ for some $0 \le i \le 2.$

The remainder of the proof consists of showing that $i = 0.$ Suppose, on the contrary, that $i > 0,$ and let $K \in \ch(\hb{n}).$ By Lemma \ref{p-n} and (\ref{e:PseudoInequality}), we have
\[
p^{4n + 2} \ge m_{s}(K) = p^{3 - \delta(K)}m(K) =  p^{3 - \delta(K)}p^{4n + i} =  p^{4n + 3 + i - \delta(K)}.
\]
This gives $4n + 3 + i - \delta(K) \le 4n + 2;$ that is, $i + 1 \le \delta(K) \le 3.$ Since $i = 1$ or $i = 2,$ we have $\delta(K) = 2$ or $\delta(K) = 3.$

\vspace{.1in}

\noindent \underline{\textbf{Case I:} $\delta(K) = 3$}

\vspace{.05in}

In this case, $K \le Z(\hb{n})$ by Lemma~\ref{emptyI}. Hence, $|K| \leq p^{n}$ and $|C(K)| = p^{3n}.$ This implies that $m(K) \leq p^{4n}.$ Since $K \in \ch(\hb{n})$ and $m^{\ast}(\hb{n}) = p^{4n + i},$ we obtain $m(K) = p^{4n + i} \leq p^{4n}.$ This means that $i = 0$. But this cannot happen as $i > 0$. And so, $\delta(K) \ne 3.$

\vspace{.1in}

\noindent \underline{\textbf{Case II:} $\delta(K) = 2$}

\vspace{.05in}

Since $\delta(K) = 2,$ any injective set of $K$ will have cardinality 1. Furthermore, $i = 1$ and $m(K) = p^{4n + 1}$. By Lemma \ref{p-n}, $m_{s}(K) = p^{3 - 2}m(K) = p^{4n + 2}.$ Hence, $K \in \pc(\hb{n})$ by (\ref{e:PseudoInequality}). This shows that $m_{s}^{*}(\hb{n}) = p^{4n + 2}.$ By Theorem \ref{inv-image}, there exists $\widehat{K} \in \ch\left(\hb{n - 1}\right)$ such that $K = q^{-1}\left(\widehat{K}\right).$ Thus, $K$ has an injective set of cardinality 2 by Lemma~\ref{inj-size}, a contradiction. And so, $i = 0.$ \end{proof}

\begin{Corollary}
For all primes $p,$ we have $\pc(\h(p^{n}))\subseteq \ch(\h(p^{n})).$
\end{Corollary}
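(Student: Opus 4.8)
The plan is to assemble Theorem~\ref{MAIN} with the structural description of $\pc$ from Theorem~\ref{inv-image} and the measure conversion formula of Lemma~\ref{p-n}; no genuinely new computation is needed. First I would fix the value of $m_{s}^{\ast}(\hb{n})$. Choosing any $H \in \ch(\hb{n-1})$, Theorem~\ref{inv-image} together with Lemma~\ref{general-p-n} gives $m_{s}^{\ast}(\hb{n}) = m_{s}(q^{-1}(H)) = p^{6}m(H)$, and Theorem~\ref{MAIN} applied at level $n-1$ yields $m(H) = p^{4n-4}$. Hence $m_{s}^{\ast}(\hb{n}) = p^{4n+2}$ (exactly as recorded by (\ref{e:PseudoInequality})).

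Next I would take an arbitrary $K \in \pc(\hb{n})$, so that $m_{s}(K) = m_{s}^{\ast}(\hb{n}) = p^{4n+2}$. The crucial step is that $K$ is an inverse image: by Theorem~\ref{inv-image} we have $\pc(\hb{n}) = q^{-1}(\ch(\hb{n-1}))$, so $K = q^{-1}(\widehat{K})$ for some $\widehat{K} \le \hb{n-1}$. Lemma~\ref{inj-size} then forces every injective set for $K$ to have cardinality $2$, whence $\delta(K) = 3 - 2 = 1$ by Definition~\ref{d:Delta}.

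With $\delta(K)$ pinned down, Lemma~\ref{p-n} gives $m_{s}(K) = p^{3-\delta(K)}m(K) = p^{2}m(K)$, so that $p^{4n+2} = p^{2}m(K)$ and therefore $m(K) = p^{4n}$. Since $m^{\ast}(\hb{n}) = p^{4n}$ by Theorem~\ref{MAIN}, we conclude $m(K) = m^{\ast}(\hb{n})$, i.e.\ $K \in \ch(\hb{n})$. As $K$ was arbitrary, this establishes $\pc(\hb{n}) \subseteq \ch(\hb{n})$.

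I do not anticipate a real obstacle here: the entire argument is a bookkeeping consequence of results already proved. The one point meriting care is making sure the chain $K \in \pc \Rightarrow K = q^{-1}(\widehat{K}) \Rightarrow |I| = 2 \Rightarrow \delta(K) = 1$ is invoked in the right order, since it is precisely the forced value $\delta(K) = 1$ (rather than $\delta(K) \in \{1,2,3\}$ in general) that collapses the measure $m(K)$ onto the maximum $p^{4n}$. In particular, the inclusion is typically \emph{strict}, because subgroups $K$ with $\delta(K) \in \{2,3\}$ can lie in $\ch(\hb{n})$ without lying in $\pc(\hb{n})$; this asymmetry is exactly what Lemma~\ref{inj-size} exploits.
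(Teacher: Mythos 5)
Your proof is correct and is essentially the paper's own argument: both begin with Theorem~\ref{inv-image} to write an arbitrary $K \in \pc(\hb{n})$ as $q^{-1}\bigl(\widehat{K}\bigr)$ for some $\widehat{K} \in \ch\bigl(\hb{n-1}\bigr)$, and both conclude $m(K) = p^{4n} = m^{\ast}(\hb{n})$ via Theorem~\ref{MAIN}. The only difference is packaging: the paper cites Lemma~\ref{pre-p-n} directly to get $m\bigl(q^{-1}\bigl(\widehat{K}\bigr)\bigr) = p^{4}m\bigl(\widehat{K}\bigr)$, and since that lemma's proof is exactly your chain (Lemma~\ref{inj-size} forcing $\delta(K) = 1$, then converting measures), your detour through $m_{s}^{\ast}(\hb{n}) = p^{4n+2}$ and Lemma~\ref{p-n} re-derives the same $p^{4}$ factor.
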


\begin{proof}
Let $K \in \pc(\h(p^{n})).$ By Theorem~\ref{inv-image}, $K = q^{-1}(H)$ for some $H \in \ch\left(\h\left(p^{n - 1}\right)\right).$ By Lemma \ref{pre-p-n} and Theorem~\ref{MAIN}, we obtain $m\left(q^{-1}(H)\right) = p^{4n}.$ Thus, $m\left(q^{-1}(H)\right) = m^{\ast}(\h(p^{n}))$ by Theorem \ref{MAIN}.
\end{proof}

\begin{Remark}
The previous Corollary is not always true for groups different from $\h(p^{n}).$ Consider the short exact sequence:
$$1 \to A_{3} \to S_{3} \to \mathbb{Z}_{2} \to 1$$
Then $\ch(S_{3}) = \{A_{3}\}$ and, by Lemma \ref{abelian-case}, $\pc(S_{3}) = \{S_{3}\}.$
\end{Remark}

\section{Appendix}

\noindent \underline{Proof of Theorem~\ref{L1}}

\vspace{.05in}

Let $H_{1} = H \cap \ker q,$ so that $\widehat{H}_{1} = f^{-1}(H_{1}).$ Since $f$ is injective, $\left|\widehat{H}_{1}\right| = |H_{1}|.$ For each $y \in \widehat{H}_{2},$ fix an element $x_{y}$ in $q^{-1}(y)$ if $y \neq 1,$ and let $x_{1} = 1.$ Put $H_{2} = \{x_{y} \, | \, y \in \widehat{H}_{2}\},$ and observe that $H_2$ is merely a subset of $G$ and not a subgroup in general. It is clear that $|H_{2}| = \left|\widehat{H}_{2}\right|.$ We claim that $H_{1}\cap H_{2} = \{1\}$. To see this, let $x \in H_{1} \cap H_{2}$. Since $x \in H_{1}=H \cap \ker q$, then $q(x) = 1.$ Now, assume $x \neq 1.$ Then $x \in H_{2}$ implies that $x=x_{y} \in q^{-1}(y)$ for some $y \in \widehat{H}_{2} - \{ 1\}$. This means that $q(x) = y \neq 1$, a contradiction.

We define a set map $\psi:\widehat{H}_{1}\times \widehat{H}_{2}\to  H$ as follows:
\[
\psi\left(\widehat{h}_{1},\widehat{h}_{2}\right) = f\left(\widehat{h_{1}}\right)x_{\widehat{h}_{2}}
\]
It is easy to see that $\psi$ is well-defined. We prove that $\psi$ is a bijection. Suppose that $\psi\left(\widehat{h}_{1},\widehat{h}_{2}\right) = \psi\left(\widehat{h}^{\prime}_{1},\widehat{h}^{\prime}_{2}\right).$ Then $f\left(\widehat{h}_{1}\right)x_{\widehat{h}_{2}} = f\left(\widehat{h}^{\prime}_{1}\right)x_{\widehat{h}^{\prime}_{2}}.$ This means $f\left(\left(\widehat{h}^{\prime}_{1}\right)^{-1}\widehat{h}_{1}\right)x_{\widehat{h}_{2}}=x_{\widehat{h}^{\prime}_{2}}$ and, thus,
\[
\widehat{h}_{2} = q\left(f\left(\left(\widehat{h}^{\prime}_{1}\right)^{-1}\widehat{h}_{1}\right)x_{\widehat{h}_{2}}\right) = q\left(x_{\widehat{h}^{\prime}_{2}}\right) = \widehat{h}^{\prime}_{2}.
\]
This gives $f\left(\widehat{h}_{1}\right)x_{\widehat{h}_{2}} = f\left(\widehat{h}^{\prime}_{1}\right)x_{\widehat{h}_{2}};$ that is, $f\left(\widehat{h}_{1}\right) = f\left(\widehat{h}^{\prime}_{1}\right).$ Since $f$ is injective, $\widehat{h}_{1}=\widehat{h}_{1}^{\prime}$, thus confirming that $\psi$ is injective.

Now let $h\in H$. If $q(h) = 1,$ then $h \in H_1 = H \cap \ker q,$ and there is an $\widehat{h}\in \widehat{H_{1}}$ such that $f\left(\widehat{h}\right) = h$ and, thus, $\psi\left(\widehat{h}, \, 1\right) = h.$ Suppose next that $q(h)=\widehat{h}_{2}\ne 1,$ and note that $h x_{\widehat{h}_2}^{-1} \in H.$ In fact, since $q(h) = \widehat{h}_2 = q\left(x_{\widehat{h}_2}\right),$ it follows that $ h x_{\widehat{h}_2}^{-1} \in \ker q \cap H = H_1.$ Now, the restriction of $f$ to $\widehat{H}_1$ is a bijection between $\widehat{H}_1$ and $H_{1}$, so there exists a unique $\widehat{h}_{1} \in \widehat{H}_{1}$ such that $f\left(\widehat{h}_{1}\right) = h x_{\widehat{h}_2}^{-1}$. It follows that $\psi\left(\widehat{h}_{1}, \widehat{h}_{2}\right) = f\left(\widehat{h}_1\right)x_{\widehat{h}_2} = h.$ And so, $\psi$ is a bijection. This completes the proof.

\end{document}